\theoremstyle{plain}
\newtheorem{theorem}[equation]{Theorem} 
\newtheorem{proposition}[equation]{Proposition}
\newtheorem{lemma}[equation]{Lemma}
\newtheorem{corollary}[equation]{Corollary}
\newtheorem*{thma}{Theorem A}
\theoremstyle{definition}
\newtheorem{definition}[equation]{Definition}
\newtheorem{remark}[equation]{Remark}
\newtheorem{remarks}[equation]{Remarks}
\newcommand{\co}{\colon \thinspace}
\newcommand{\Z}{{\mathbb Z}}
\newcommand{\abs}[1]{\lvert {#1} \rvert}
\newcommand{\aabs}[1]{\lVert {#1} \rVert}
\renewcommand{\leq}{\leqslant}
\renewcommand{\geq}{\geqslant}
\renewcommand{\epsilon}{\varepsilon}
\renewcommand{\phi}{\varphi}
\newcommand{\N}{{\mathbb N}}
\newcommand{\Ss}{{\mathscr S}}
\newcommand{\Is}{{\mathscr I}}
\newcommand{\As}{{\mathscr A}}
\newcommand{\Bs}{{\mathscr B}}
\newcommand{\Ps}{{\mathscr P}}
\renewcommand{\preceq}{\preccurlyeq}
\renewcommand{\succeq}{\succcurlyeq}
\DeclareMathOperator{\id}{id}
\DeclareMathOperator{\area}{Area}
\numberwithin{equation}{section}
  \def\tagform@#1{\maketag@@@{%
   \textbf{(\ignorespaces#1\unskip\@@italiccorr)}}}%
   \renewcommand{\eqref}[1]{\textup{\maketag@@@{(\ignorespaces%
        {\ref{#1}}\unskip\@@italiccorr)}}}
\begin{document}

\title[Snowflake geometry in CAT(0) groups]{Snowflake geometry in 
CAT(0) groups}

\author{Noel Brady}
\address{Mathematics Department\\
        University of Oklahoma\\
        Norman, OK 73019\\
        USA}
\email{nbrady@math.ou.edu\\forester@math.ou.edu}
\author{Max Forester}

\begin{abstract} 
We construct CAT(0) groups containing subgroups whose Dehn functions are
given by $x^s$, for a dense set of numbers $s \in [2, \infty)$. This
significantly expands the known geometric behavior of subgroups of
CAT(0) groups. 
\end{abstract}

\maketitle

\thispagestyle{empty}

\section{Introduction}
\label{sec:intro}

In this paper we are concerned with the geometry of subgroups of
CAT(0) groups. Such subgroups need not be CAT(0) themselves, and in
fact this realm includes many instances of exotic or unusual
group-theoretic behavior. For example, there exist finitely presented
subgroups of CAT(0) groups having unsolvable membership and conjugacy
problems \cite{BBMS2,Bridson:raags}, or possessing infinitely many
conjugacy classes of finite subgroups (implicitly in \cite{FM},
explicitly in \cite{LN,BCD}). Many well known
examples of groups having interesting homological properties, such 
as the Stallings--Bieri groups or other Bestvina--Brady kernels,
arise naturally as subgroups of CAT(0) groups. 

Regarding the geometry, specifically, of subgroups of CAT(0) groups,
only a few types of unusual behavior have been observed to date. One
common feature to all subgroups of CAT(0) groups is that cyclic
subgroups are always undistorted. This constraint immediately rules
out a great many examples from being embeddable into CAT(0)
groups. The examples found so far with interesting geometric
properties have been constructed using fairly sophisticated
techniques. Such examples include the hydra groups of \cite{DR},
which are CAT(0) groups possessing free subgroups with extreme
distortion; subgroups of CAT(0) groups having exponentional or
polynomial Dehn functions \cite{BBMS,Bridson:raags},
\cite{BRS,ABDDY,BGL}; and subgroups of CAT(0) groups having different
homological and ordinary Dehn functions \cite{ABDY}. 

Our goal in the present paper is to construct CAT(0) groups containing
subgroups that exhibit a wide range of isoperimetric behavior. 
Our main theorem is the following. 

\begin{thma}
Let $m, n$ be positive integers such that $\alpha = n\log_m(1 +
\sqrt{2}) \geq 1$. Then there exists a $6$--dimensional
\textup{CAT(0)} group $G$ which contains a finitely presented subgroup
$S$ whose Dehn function is given by $\delta_S(x) = x^{2\alpha}$. 
\end{thma}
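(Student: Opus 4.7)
\emph{Proof proposal.} The plan is to realize the Pell number $\lambda = 1+\sqrt{2}$ as the growth rate of an iterated \emph{snowflake} filling, following and extending the template of Brady--Bridson and its successors. The integer matrix $A = \begin{pmatrix} 1 & 2 \\ 1 & 1 \end{pmatrix}$ has $\lambda$ as its Perron eigenvalue, and the entries of $A^{n}$ grow like $\lambda^{n}$. I would first use this data to design a flat polygonal template $P$ whose boundary decomposes into ``short'' edges (of length $1$) and ``long'' edges (of length $m$), arranged so that substituting a similar copy of $P$ in place of each long edge rescales the perimeter by a factor of $m$ and multiplies the area by $\lambda^{n}$. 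Iterating $k$ times then produces a snowflake disc of perimeter $\asymp m^{k}$ and area $\asymp \lambda^{nk} = (m^{k})^{\alpha}$, which is exactly the growth predicted by the target exponent $2\alpha$.

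Next I would build the ambient CAT(0) complex $X$ by gluing this template into a $6$-dimensional piecewise Euclidean complex, together with auxiliary factors (scaling strips and branching sheets) that geometrically realize the rescaling isometry, so that the snowflake scaling relation becomes a consequence of the CAT(0) geometry. A link-condition check, carried out dimension by dimension, then establishes that $X$ is CAT(0), and the deck group $G$ of its universal cover is the desired $6$-dimensional CAT(0) group. The subgroup $S$ will arise either as the kernel of a Bestvina--Brady style character $G \to \Z$ or as the stabilizer of a codimension-one snowflake subcomplex; finite presentability will follow from the relevant level set being simply connected and $\pi_1$ carrying the snowflake relations.

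For the upper bound $\delta_S(x) \preceq x^{2\alpha}$ I would argue that any loop $w \in S$ of length $x$ admits a quadratic-area filling in $X$, which can then be cut into snowflake pieces using the self-similarity of $P$ to yield an intrinsic $S$-diagram of area $\preceq x^{2\alpha}$; the standard snowflake recursion carries out this decomposition. The main obstacle will be the matching lower bound: fillings in $G$ are at worst quadratic, so one has to show that restricting to $S$-diagrams genuinely costs an extra factor of $x^{2\alpha - 2}$. The strategy is to exhibit explicit snowflake words $w_{k}$ of length $\asymp m^{k}$, and then to produce a combinatorial $2$-cocycle (or a branched area form) on $X$, pulled back from $S$, that evaluates to $\asymp \lambda^{2nk}$ on every $S$-filling of $w_{k}$. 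Constructing this invariant and verifying that it detects the full snowflake area, rather than only a lower-order contribution, while cleanly separating the intrinsic geometry of $S$ from that of the CAT(0) ambient $G$, is where I expect the bulk of the technical work to lie.
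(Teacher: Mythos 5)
Your proposal diverges from the paper at the level of the basic construction, and the divergence exposes a genuine gap. You propose to realize $S$ as a Bestvina--Brady kernel or as the stabilizer of a codimension-one subcomplex inside a self-similar $6$--dimensional piecewise Euclidean complex. But the paper's $S$ is nothing of the sort: it is a multiple HNN extension $S_{T,n}$ of a vertex group $V_T$ (an amalgam of copies of $F_2\times F_2\times F_2$ along ``antidiagonal'' $F_2$ edge groups), with stable letters conjugating one peripheral $F_2$ to another via a power $\phi^n$ of a hyperbolic free group automorphism. The choice of \emph{free} rank-$2$ edge groups is forced: the earlier snowflake groups of \cite{BBFS}, whose geometry matches your ``template $P$ with long and short edges'' picture, have cyclic edge groups with precisely calibrated distortion, and such groups cannot embed in any CAT(0) group. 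Your proposal has no mechanism to avoid this obstruction. Moreover the $6$--dimensionality and the CAT(0) structure do not come from ``auxiliary scaling strips'' realizing a rescaling isometry; they come from the \emph{embedding trick}: $G_{T,n}$ is a multiple HNN extension of $W_T$ (built from $G=F_2\rtimes_\phi\Z$ instead of $F_2$), and after a Tietze move the twisting automorphisms $\phi^n\times\id$ of $G$ become inner, so $G_{T,n}$ is an \emph{untwisted} graph of groups, made CAT(0) by gluing Tom Brady's locally CAT(0) $2$--complexes for $G$. The $6$ is simply $3\times 2$ from $G\times G\times G$. Nothing in your proposal produces this, nor the injectivity of $S_{T,n}\hookrightarrow G_{T,n}$, which requires a Bass-type graph-of-groups injectivity criterion applied to carefully matched edge/vertex maps.

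Your lower- and upper-bound strategies also run into concrete obstructions. For the lower bound you suggest a pulled-back $2$--cocycle or ``branched area form'' detecting the snowflake area. The paper explicitly points out that $V_T$ has cohomological dimension $3$, so $S_{T,n}$ does not admit an aspherical presentation $2$--complex; the Gersten-style cohomological lower bound machinery (which you would need for a cocycle argument, and which worked cleanly in \cite{BBFS}) is therefore unavailable. The paper instead constructs explicit snowflake diagrams over $Y_{T,n}$ and proves they are \emph{least-area} by a delicate analysis of $\sigma$--corridors and their crossing regions (Propositions \ref{leastarea}, \ref{doubledarea}, \ref{sfleastarea}). For the upper bound, ``cut a quadratic filling in $X$ into snowflake pieces'' is not an argument: the paper's upper bound (Proposition \ref{areainS}) runs through a difficult inductive distortion estimate for the edge group $A_{\nu_0}\le S_{T,n}$ (Proposition \ref{distortion}), which requires folded corridors in the Bridson--Groves sense, the weighted balancing property of $V_T$ (Proposition \ref{balanced}), and a two-case induction handling forward- and backward-facing $r_j$--corridors. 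None of this machinery, or anything substituting for it, appears in your sketch.
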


Varying $m$ and $n$, the resulting isoperimetric exponents $2\alpha$
form a dense set in the interval $[2, \infty)$. Thus, the
\emph{isoperimetric spectrum} of exponents arising from subgroups of
CAT(0) groups resembles, in its coarse structure, that of finitely
presented groups generally \cite{BrBr}. 

\setcounter{tocdepth}{1}
\tableofcontents

The group $S$ in the main theorem is constructed with reference to
certain parameters; it is denoted $S_{T,n}$ where $T$ is a finite tree
and $n$ is an integer. It will embed into a CAT(0) group denoted
$G_{T,n}$. 

\subsection*{Comparison with prior constructions of snowflake groups}
The construction of the snowflake groups $S_{T,n}$ builds on the
methods used in \cite{BBFS} to construct groups with specified power
functions as Dehn functions. The latter groups do not embed into any
CAT(0) group, for the simple reason that they contain distorted cyclic
subgroups. The same is true for the groups constructed in \cite{SBR},
and many other groups having interesting Dehn functions. Indeed, it is
the presence of precisely distorted cyclic subgroups that enables the
computation of the Dehn functions in \cite{BBFS} in the first place. 

The basic structure of the groups in \cite{BBFS} is that of a graph of
groups, with vertex groups of cohomological dimension $2$, and
infinite cyclic edge groups. The lower bound for the Dehn function was
easy to establish, using asphericity of the presentation $2$--complex. 
The distortion of the cyclic edge groups was then precisely determined,
and this distortion estimate led to a matching upper bound for the Dehn
function of the ambient group. 

If one hopes to embed examples into CAT(0) groups, one cannot have
distorted cyclic subgroups. The groups $S_{T,n}$ we construct here are
fundamental groups of graphs of groups in which the edge groups are
\emph{free groups} of rank $2$. Moreover, the vertex groups are groups
of cohomological dimension $3$, and therefore $S_{T,n}$ does not admit
an aspherical presentation. For this reason, establishing the lower
bound for the Dehn function of $S_{T,n}$ requires some effort. 

The change to free edge groups introduces some major challenges. 
In \cite{BBFS}, given any element of an edge group, conjugation by the
appropriate stable letter resulted in a word $r$ times longer, for a 
uniform factor $r$. In the Cayley $2$--complex, each edge
space had a well defined long side and short side. A key geometric
property was that no geodesic could pass from the short side to the long
side of any edge space, and back again. It thus became possible to
determine the large scale behavior of geodesics, which in turn led to
very explicit distance estimates in the group. 

In the groups $S_{T,n}$ defined here, the edge groups have monodromy
modeled on a hyperbolic free group automorphism. Under this
automorphism, some words get longer and some get shorter. It is no
longer possible to constrain the behavior of geodesics relative to the
Bass-Serre tree for the graph of groups structure as in
\cite{BBFS}. This local stretching and compressing behavior of the
automorphism is a phenomenon that was similarly faced in the work of
Bridson and Groves in \cite{BrGr}. To accommodate this behavior, we
rely on a coarser and more robust approach. We begin with the
framework used in \cite{BrBr}, and develop additional techniques for
handling the lack of local control over geodesics. These ideas are
described in more detail in the subsection ``Edge group distortion''
near the end of this Introduction.

\subsection*{The embedding trick}
The method we use for embedding $S_{T,n}$ into a CAT(0) group is based
on a twisting phenomenon for graphs of groups. Suppose $\alpha \co G_e
\to G_v$ is the inclusion map from an edge group to a vertex group,
where $e$ is incident to $v$. If one replaces $\alpha$ by $\phi
\circ\alpha$, where $\phi$ is an automorphism of $\alpha(G_e)$, then
typically one obtains a very different fundamental group. However, if
$\phi$ is the restriction of an \emph{inner automorphism} of $G_v$,
then the fundamental group remains unchanged. 

The graph of groups structure of the group $S_{T,n}$ incorporates in
an essential way the ``twisting automorphisms'' $\phi$ just described;
properties of these automorphisms influence strongly the geometry of
$S_{T,n}$. The presence of twisting accounts for the lack of
non-positive curvature and the possibility of larger-than-quadratic
isoperimetric behavior. 

In order to embed $S_{T,n}$ into a larger group $G_{T,n}$, we give the
latter the structure of a graph of groups that is very similar to that
of $S_{T,n}$. It has the same underlying graph, somewhat larger
edge and vertex groups, and inclusion maps which make use of 
twisting automorphisms closely related to those in $S_{T,n}$. This is to
ensure the existence of a morphism of graphs of groups which induces a
homomorphism $S_{T,n} \to G_{T,n}$. Injectivity of this homomorphism
is established using a criterion due to Bass \cite{Bass}; see Lemma
\ref{basslemma}. 

The basic trick now is that the vertex group in $G_{T,n}$ has been
enlarged specifically to arrange that the twisting automorphisms
become \emph{inner}. Thus the twisting can be undone, without changing
the fundamental group. With no twisting, it becomes a simple matter to
put a CAT(0) structure on $G_{T,n}$, provided the enlarged vertex
group is already CAT(0). 

\subsection*{Bieri doubles and the embedding trick} Earlier we
referred to examples of subgroups of CAT(0) groups having exponentional
or polynomial Dehn functions. The examples from \cite{BBMS,BRS} made
use of an embedding theorem from \cite{BBMS}. The authors embed
certain doubles of groups into the direct product of a related group
and a free group; the latter product may then have a CAT(0) structure,
while the embedded subgroup has interesting geometric behavior. The
embedded subgroup is called a \emph{Bieri double}. 

It turns out that some instances of this method can be viewed as
instances of the embedding trick discussed above, resulting in an 
alternative way of looking at certain Bieri doubles. 

Here is an example, which includes the \cite{BRS} examples of
subgroups of CAT(0) groups having polynomial Dehn functions. Let $N$ be
any group with an automorphism $\phi$, 
and consider the double of $N \rtimes_{\phi} \Z$ along $N$: $(N
\rtimes \langle t_1 \rangle) \ast_N (N \rtimes \langle
t_2\rangle)$. There is a homomorphism 
\[ (N \rtimes \langle t_1 \rangle) \ast_N (N \rtimes \langle
  t_2\rangle) \ \to \ (N \rtimes \langle t \rangle)
\times \langle u, v\rangle\]
which sends $N$ to $N$, $t_1$ to $tu$, and $t_2$ to $tv$. The
arguments of \cite{BBMS} show that this map is an embedding. 

An alternative viewpoint is to note that the double $(N \rtimes
\langle t_1 \rangle) \ast_N (N \rtimes \langle t_2\rangle)$ has a
graph of groups decomposition $\As$ with underlying graph a figure-eight,
with all edge and vertex groups $N$, and with monodromy along each
loop given by $\phi$. The elements $t_1$ and $t_2$ are the two stable
letters. The larger group $(N \rtimes \langle t \rangle ) \times
\langle u, v\rangle$ has a similar decomposition with the same
underlying graph, with vertex and edge groups $N \rtimes \langle t
\rangle$, with monodromy the identity. In this case $u$ and $v$
are the two stable letters. 

This description of the larger group is the ``untwisted'' presentation
for it. If we change the monodromy to be by the automorphism
$\phi \times \id$ of $N \rtimes \langle t \rangle$, then the new graph
of groups $\Bs$ still has fundamental group $(N \rtimes \langle t \rangle)
\times \langle u, v \rangle$, since $\phi \times \id$ is conjugation
by $t$ in the vertex group $N \rtimes \langle t \rangle$. The two
stable letters for this new decomposition are $tu$ and $tv$. There is
an obvious morphism of graphs of groups $\As \to \Bs$ since $\phi
\times \id$ restricts to $\phi$ on $N$. The induced map on fundamental
groups is exactly the embedding displayed above. 

\subsection*{An overview of the construction}
Recall that our basic aim is to construct a pair of finitely presented
groups $S$ and $G$ such that $G$ is CAT(0), $S$ embeds into $G$, and $S$
has a specified Dehn function. The starting data needed for our
constructions is as follows. 

First choose a palindromic monotone automorphism $\phi \co \langle x, y
\rangle \to \langle x, y \rangle$ (see Section \ref{sec:prelim} for these
notions). Let $\lambda$ be the Perron-Frobenius eigenvalue of the
transition matrix of $\phi$. Also let $T$ be a finite tree with valence
at most $3$, and let $m = \abs{T}+1$ (here, $\abs{T}$ denotes the number
of vertices of $T$). Choose a positive integer $n$ such that $\alpha =
n\log_m(\lambda) \geq 1$. Based on these choices, we define finitely
presented groups $S_{T,n}$ and $G_{T,n}$. 

These groups are multiple HNN extensions of groups $V_T$ and $W_T$ 
respectively, with $m$ stable letters. In Section \ref{sec:groups} we
define these vertex groups; they are, themselves, fundamental groups of
graphs of groups with underlying graph $T$. The group $V_T$, in
particular, is required to have some very special properties, notably
the \emph{balancing property}. The structure needed to achieve this
property also dictates how $W_T$ should be constructed. 

Briefly, $V_T$ has vertex groups isomorphic to $F_2 \times F_2 \times
F_2$, and each such group has three designated \emph{peripheral
subgroups} which are used as edge groups; these edge groups are
isomorphic to $F_2$. They are the ``antidiagonal'' copies of $F_2$ in
each factor $F_2 \times F_2$ in each vertex group. 

The group $W_T$ is built in a similar way, with the free-by-cyclic
group $G = F_2 \rtimes_{\phi} \Z$ used in place of $F_2$; thus the
vertex groups are $G \times G \times G$, and the edge groups are
isomorphic to $G$. The groups $G$ possess $2$--dimensional CAT(0)
structures, by work of Tom Brady \cite{TBr}. 

Given the CAT(0) structure on $W_T$, the ambient group $G_{T,n}$ can
also be made CAT(0), as in the discussion of the Embedding Trick
above; see Section \ref{sec:cat0}. When building this CAT(0)
structure, we fix the free group automorphism $\phi$ to be a
particular one which is palindromic. Tom Brady's CAT(0) structure has
a symmetry that respects the palindromic nature of 
$\phi$. In the case of the other free-by-cyclic groups  
arising in \cite{TBr}, we believe that analogous symmetries exist (in
the palindromic case), but establishing this would take us too far
afield. It is for this reason that the exponents in the main theorem
involve $1+\sqrt{2}$ (which is $\lambda$ for this choice of
$\phi$). Apart from the CAT(0) statement for $G_{T,n}$, the rest of
the paper does not require any particular choice of $\phi$, and we
compute the Dehn functions of $S_{T,n}$ in terms of a general
(monotone, palindromic) $\phi$.

\subsection*{Corridor schemes and $\sigma$--corridors} There are several
places where we make use of corridor arguments in van Kampen
diagrams. There are many different types of corridors under
consideration simultaneously, and these different types are codified as
\emph{corridor schemes}. In particular, the group $V_T$ possesses a large
number of distinct corridor schemes, which are parametrized by maximal
segments $\sigma$ in a tree $\widehat{T}$. Each such segment determines a
corridor scheme, whose corridors are called
\emph{$\sigma$--corridors}. 

These corridors provide the primary means by
which we establish the properties of $V_T$ that are needed, such as the
\emph{balancing property} (Proposition \ref{balanced}). This latter
property is somewhat awkward to explain, but it concerns the distribution of
generators in words representing the trivial element. The property has
the most force when the generators all lie in peripheral subgroups of
$V_T$. It implies, for instance, that an element of a peripheral subgroup
cannot be expressed efficiently using only generators from \emph{other}
peripheral subgroups; see Remarks \ref{balancedremarks}. The balancing
property plays a key role in several parts of the paper, most notably in
the distortion bound of Proposition \ref{distortion}, and also in the
area bound for $V_T$ given in Proposition \ref{areainVT}. 

\subsection*{Least-area diagrams} In sections \ref{sec:canonical} and
\ref{sec:lower} we define families of van Kampen diagrams in order to
establish the lower bound for the Dehn function of $S_{T,n}$. We use
basic building blocks called \emph{canonical diagrams}, which are defined
for every palindromic word in $\langle x, y\rangle$. These are diagrams
over the vertex group $V_T$. We then build \emph{snowflake diagrams}
over $S_{T,n}$, using canonical diagrams joined along strips dual to the
stable letters of $S_{T,n}$. 

To establish the lower bound, we show that all of these diagrams 
minimize area relative to their boundary words. Proving this requires a
detailed study of $\sigma$--corridors and their intersection properties
for various $\sigma$. 

\subsection*{Edge group distortion} The heart of the computation of the
Dehn function of $S_{T,n}$ lies in Proposition \ref{distortion}, in which
we establish the distortion of the edge groups inside $S_{T,n}$. This
argument requires first some properties of \emph{folded corridors},
analogous to those studied in \cite{BrGr} in the context of
free-by-cyclic groups. These properties are established in the first half
of Section \ref{sec:distortion}, culminating in Lemma
\ref{segmentsnearlyabove}. 

Next we establish the bound on edge group distortion. The proof is an
inductive proof based on Britton's Lemma. It falls into two cases, which
require very different treatments. 
In the first case, the proof is based on a method from \cite{BrBr}. It is
only thanks to the balancing property of $V_T$ that this 
argument can be carried out. 

The second case is where the folded corridors and Lemma
\ref{segmentsnearlyabove} come in. The inductive framework is
based on the nested structure of $r_j$--corridors in a putative van
Kampen diagram. These corridors may appear in two possible orientations,
\emph{forwards} and \emph{backwards}. Forward-facing corridors present
few difficulties and can be handled using the method above based on the
balancing property. If there is a backwards-facing $r_j$--corridor,
then it may introduce undesirable geometric effects that threaten
to spoil the inductive calculation. The argument in this case is to show
that when this occurs, there will be forward facing corridors
just behind the first one, and perfectly matching segments along these
corridors, along which any metric distortion introduced by the first
corridor is exactly undone.

\subsection*{Computing the Dehn function} 
Establishing the upper bound for the Dehn function of $S_{T,n}$
proceeds along similar lines as in \cite{BBFS}. One important step in
this argument is a statement about area in the vertex group. This
occurs in Proposition \ref{areainVT} in the present paper. Due to the
more complicated structure of $V_T$ (being constructed from free
groups), the argument is considerably more involved than the
corresponding result in \cite{BBFS}. It requires many of the tools
developed here, such as the balancing property and corridor schemes.

\subsection*{Acknowledgments}
Noel Brady acknowledges support from the NSF and from NSF award
DMS-0906962. Max Forester acknowledges support from NSF award
DMS-1105765.

 \section{Preliminaries}
 \label{sec:prelim}

In this section we review some basic definitions and properties
concerning Dehn functions, van Kampen diagrams, and words in the free
group. 

\subsection*{Dehn functions}
Let $G= \langle \, A \mid R \, \rangle$ be a finitely presented group and
$w$ a word in the generators $A^{\pm 1}$ representing the trivial
element of $G$. We define the \emph{area} of $w$ to be
\[
\area(w) \ = \ \min \Big\{ N \in \N \ \big| \ \exists \text{ equality } w =
\prod_{i=1}^N u_j r_j u_j^{-1} \text{ freely, where }r_j \in R^{\pm 1} \Big\}.
\]
The \emph{Dehn function} $\delta(x)$ of the finite presentation
$\langle \, A \mid R \, \rangle$ is given by
\[
\delta(x) \ = \ \max \Big\{ \area(w) \ \big| \ w \in \ker(F(A) \to G),
\abs{w} \leq x \Big\}
\]
where $\abs{w}$ denotes the length of the word $w$. 

There is an equivalence relation on functions $f \co \N \to \N$
defined as follows. First, we say that $f \preceq g$ is there is a
constant $C> 0$ such that 
\[
f(x) \ \leq \ C g(Cx) + Cx
\]
for all $x \in \N$. If $f \preceq g$ and $g \preceq f$ then we say
that $f$ and $g$ are \emph{equivalent}, denoted $f \simeq g$. It is
not difficult to show that two finite presentations of the same 
group define equivalent Dehn functions; we therefore speak of ``the''
Dehn function of $G$, which is well defined up to equivalence. 

\begin{remark}\label{equivremark}
In order to show that $f \preceq g$ for non-decreasing functions $f$
and $g$, it is sufficient to prove that $f(n_i) \leq g(n_i)$ for an
unbounded sequence of positive integers $\{n_i\}$ such that the ratios
$n_{i+1}/n_i$ are bounded. For, if $n_{i+1} \leq C n_i$ for all $i$
and $x$ is any integer between $n_i$ and $n_{i+1}$, say, we have
$f(x) \leq f(n_{i+1}) \leq g(n_{i+1}) \leq g(C n_i) \leq g(Cx)$. 
Therefore $f(x) \leq C g(x)$ for all $x$. 
\end{remark}

Let $X$ be a $2$--dimensional cell complex. We call $X$ a
\emph{presentation $2$--complex} if it has one $0$--cell, and every
$2$--cell is attached by a map $f \co S^1 \to X^{(1)}$ which is
\emph{regular} in the following sense: there is a cell structure for
$S^1$ such that the restriction of $f$ to each edge maps monotonically
over a $1$--cell of $X$. 

The presentation $2$--complex of the presentation $\langle \, A \mid R
\, \rangle$ has oriented $1$--cells labeled by the generators in $A$, and a
$2$--cell for each relator $r$ in $R$, attached via a map $S^1 \to
X^{(1)}$ which traverses edges sequentially, following the word $r$. 

Given a presentation $2$--complex $X$, one then has the notion of a
\emph{van Kampen diagram} over $X$. Briefly, a van Kampen diagram for
the word $w$ is a contractible, planar $2$--complex with edges labeled
by generators, with each $2$--cell boundary word equal to a relator, with
outer boundary word $w$. The \emph{area} of
the diagram is the number of its $2$--cells. It is a standard fact that
$\area(w)$ as defined above can be interpreted as the minimal
area of a van Kampen diagram over $X$ for $w$. See \cite{Bridson:word}
for details on this interpretation of $\area(w)$. We refer to
\cite{Bridson:word} for background on Dehn functions generally, and also
to \cite{BH} for background on CAT(0) spaces. 

\subsection*{Words and automorphisms}
A word $w(x,y)$ is \emph{palindromic} if $w(x^{-1},y^{-1}) =
w(x,y)^{-1}$ as words in the free group. An automorphism $\phi \co
\langle x, y\rangle \to \langle x, y \rangle$ is called
\emph{palindromic} if it takes palindromic words to palindromic
words. Note that $\phi$ will be palindromic if the two words $\phi(x)$,
$\phi(y)$ are palindromic. If $\phi$ is palindromic, so is $\phi^n$ for
any $n \geq 1$. 

A word $w(x,y)$ is \emph{positive} if it does not contain occurrences of
$x^{-1}$ or $y^{-1}$. It is \emph{negative} if it does not contain $x$ or
$y$. It is \emph{monotone} if it is positive or negative. Note that
monotone words are reduced, and if $w$ is monotone then so is
$w^{-1}$. If an automorphism $\phi$ takes $x$ and $y$ to monotone words
of the same kind, then it takes all monotone words to monotone words. We
call $\phi$ \emph{monotone} if it has this property. The same will then
be true of $\phi^n$ for any $n \geq 1$. (The inverse $\phi^{-1}$ will
typically \emph{not} be monotone.)

\section{Group-theoretic constructions}\label{sec:groups}

In this section we begin by constructing groups $V_T$ and $W_T$,
which will then serve as vertex groups in graph of groups 
decompositions defining the snowflake group $S_{T,n}$ and the CAT(0)
group $G_{T,n}$. 

Our constructions make use of a free group automorphism $\phi \co
\langle x, y \rangle \to \langle x, y \rangle$ which is both
palindromic and monotone. For concreteness, we define $\phi$ to be the
automorphism given by $\phi(x) = xyx$, $\phi(y) = x$. In most of what
follows, only the palindromic and monotone properties of $\phi$ are
relevant. However, in Section \ref{sec:cat0} we define the CAT(0)
structure for $G_{T,n}$ based on the knowledge that $\phi$ is the
automorphism just defined. This section is the only place where
explicit knowledge of $\phi$ is used. In particular, all results
concerning the groups $S_{T,n}$ are valid for any palindromic, monotone
$\phi$. 

Let $\lambda$ be the exponential growth rate of $\phi$. That is,
$\lambda$ is the Perron-Frobenius eigenvalue of the transition matrix
$M_{\phi} = \begin{pmatrix} \, \abs{\phi(x)}_x & \abs{\phi(y)}_x \\
\abs{\phi(x)}_y & \abs{\phi(y)}_y \, \end{pmatrix}$. 
From the beginning, we will fix an integer $n \geq 1$ and work with the
automorphism $\phi^n$. 

\subsection*{The groups $F$ and $G$}
In this paper $F$ will always denote the free group of rank two,
$\langle x, y\rangle$. We define $G$ to be the free-by-cyclic group $F
\rtimes_{\phi^n} \langle t \rangle$. That is,
\[ G \ = \  \langle \, x, y, t \mid t x t^{-1} = \phi^n(x), \ t y t^{-1} =
\phi^n(y) \, \rangle.\]
One verifies easily that because $\phi^n$ is palindromic, there is an
involution $\tau_G \co G \to G$ defined by $\tau_G(x) =
\overline{x}$, $\tau_G(y) = \overline{y}$, and $\tau_G(t) = t$ (bar
denotes inverse). Similarly, define the involution 
$\tau_F \co F \to F$ by $\tau_F(x) = \overline{x}$, $\tau_F(y) =
\overline{y}$. We may refer to either involution simply as $\tau$. 

The groups we construct will contain many copies of $F$ and $G$. The
different copies will be indexed along with their generators:
$F_{i} = \langle x_i, y_i \rangle$ and $G_i = \langle x_i, y_i \rangle
\rtimes _{\phi^n} \langle t_i \rangle$.

\subsection*{The groups $V$ and $W$} 
These groups are defined as follows:
\begin{align*}
V \ &= \ F_0 \times F_1 \times F_2, \\
W \ &= \ G_0 \times G_1 \times G_2. 
\end{align*}
Thus $V$ has generators $x_0, y_0, x_1, y_1, x_2, y_2$ and $W$
contains $V$ along with the additional generators $t_0, t_1,
t_2$. 

Define the following subgroups (indices mod 3): 
\begin{align*}
A_i \ &= \ \langle \overline{x}_i x_{i+1}, \overline{y}_i y_{i+1} \rangle
\ < \ F_i \times F_{i+1} \ < \ V, \\
B_i \ &= \ \langle \overline{x}_i x_{i+1}, \overline{y}_i y_{i+1},
t_i t_{i+1} \rangle \ < \ G_i \times G_{i+1} \ < \ W. 
\end{align*}
There are injective homomorphisms $F \to F_i \times F_{i+1}$ and $G \to
G_i \times G_{i+1}$ given by $\tau_F \times \id$ and $\tau_G \times
\id$ respectively, taking $x$ to $\overline{x}_i x_{i+1}$, $y$ to
$\overline{y}_i y_{i+1}$, and  $t$ to $t_i t_{i+1}$. The subgroups $A_i$
and $B_i$ are the images of these homomorphisms, and therefore are
isomorphic to $F$ and $G$, with the generators listed above corresponding
to the standard generators $x, y, t$. We name these generators as
follows: 
\[a_i = \overline{x}_i x_{i+1}, \ \ b_i = \overline{y}_i y_{i+1}, \ \ 
{c}_i = t_i t_{i+1}. \] 
Thus, $A_i = \langle a_i, b_i \rangle$ and $B_i = \langle a_i, b_i
\rangle \rtimes_{\phi^n} \langle c_i\rangle$. 
These subgroups will be called \emph{peripheral subgroups}. 

\subsection*{The groups $V_T$ and $W_T$} These groups
will be obtained by amalgamating copies of $V$ (respectively, $W$)
together along peripheral subgroups. 

Let $T$ be a finite tree of valence at most $3$. Choose a copy of $V$ for
each vertex of $T$, and assign  
distinct peripheral subgroups of $V$ to each of the outgoing edges at
that vertex. Then, for each edge in $T$, amalgamate the associated
peripheral subgroups of the two copies of $V$ via the isomorphism
$\tau$ (relative to their standard generating sets).\footnote{There is
  no need to specify a direction because $\tau$ is an involution.}
The resulting group is $V_T$. 
In the case where $T$ is a single vertex, $V_T$ is just $V$. 

The group $W_T$ is defined by the same procedure with vertex groups $W$
instead of $V$. Again, if $T$ has one vertex, then $W_T = W$. 

In order to have consistent notation, assign non-overlapping triples
of indices $(0,1,2)$, $(3, 4, 5)$, etc. to the vertices of $T$, and use
these in place of $(0, 1, 2)$ in the definitions of $V$ and $W$
above. For example, in the case of $W_T$, if a vertex has triple $(3,
4, 5)$, then the vertex group is ${G}_3 \times {G}_4 \times {G}_5$
with peripheral subgroups labeled ${B}_3$, ${B}_4$, and ${B}_5$. The
subgroup ${B}_5$ has standard generators $a_5 = \overline{x}_5 x_3$,
$b_5 = \overline{y}_5 y_3$, and ${c}_5 = t_5 t_3$. 

With this notation, if an edge $e$ of $T$ is assigned the peripheral
subgroups $B_i$ and $B_j$ in its neighboring vertex groups,
then amalgamating along $e$ adds the relations $a_i =
\overline{a}_j$, $b_i = \overline{b}_j$, and ${c}_i = {c}_j$. 

\begin{figure}[ht]
\labellist
\hair 2pt
\small
\pinlabel* {$\textcolor{blue}{0}$} at 12.5 14
\pinlabel* {$\textcolor{blue}{1}$} at 27 14
\pinlabel* {$\textcolor{blue}{2}$} at 19.5 28

\pinlabel* {$\textcolor{blue}{3}$} at 39 20.5
\pinlabel* {$\textcolor{blue}{4}$} at 46.5 33.5
\pinlabel* {$\textcolor{blue}{5}$} at 31.5 33.5

\pinlabel* {$\textcolor{blue}{6}$} at 46.5 47.5
\pinlabel* {$\textcolor{blue}{7}$} at 39 61
\pinlabel* {$\textcolor{blue}{8}$} at 31.5 47.5

\pinlabel* {$\textcolor{blue}{9}$} at 51.5 14
\pinlabel* {$\textcolor{blue}{10}$} at 65 14
\pinlabel* {$\textcolor{blue}{11}$} at 58.5 27

\pinlabel* {$\textcolor{blue}{12}$} at 71  33.5
\pinlabel* {$\textcolor{blue}{13}$} at 78 20.5
\pinlabel* {$\textcolor{blue}{14}$} at 84.5 33.5

\pinlabel* {$\textcolor{blue}{15}$} at 85 47.5
\pinlabel* {$\textcolor{blue}{16}$} at 78 61
\pinlabel* {$\textcolor{blue}{17}$} at 71 47.5

\normalsize
\pinlabel {$A_0$} [tl] at 19 7.5
\pinlabel {$A_9$} [tl] at 58.5 7.5
\pinlabel {$A_{13}$} [Bl] at 86 20
\pinlabel {$A_{15}$} [Bl] at 88.5 55
\pinlabel {$A_{16}$} [Br] at 71 62
\pinlabel {$A_6$} [Bl] at 46 60
\pinlabel {$A_7$} [Br] at 28.5 56
\pinlabel {$A_2$} [Br] at 9 22

\pinlabel {${T}$} at 174 30
\endlabellist
\includegraphics[width=4.5in]{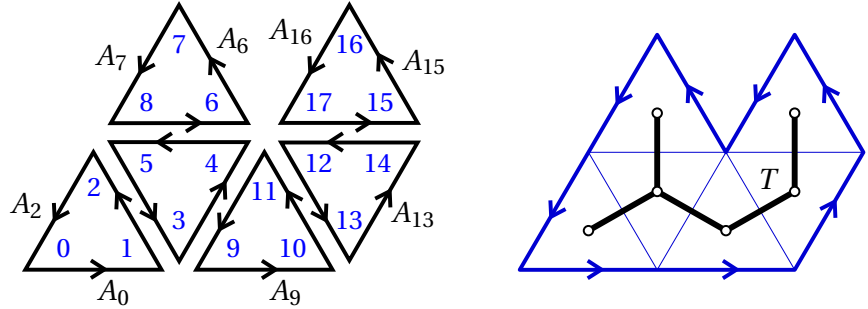}
\caption{The diagram $D$ on the left, and the triangulated
  $(\abs{T}+2)$--gon on the right, with dual graph $T$. Each triangle
  with corners labeled $i, j, k$ represents the vertex group $F_i \times
  F_j \times F_k$ or $G_i \times G_j \times G_k$. The side pairings
  depict the amalgamations $A_1 =_{\tau} A_5$, $A_3 =_{\tau}
  A_{11}$, $A_4 =_{\tau} A_8$, $A_{10} =_{\tau} A_{12}$, and $A_{14}
  =_{\tau} A_{17}$ (in the case of $V_T$).}\label{fig:D} 
\end{figure}
It may be helpful to consider the diagram $D$ as shown in Figure
\ref{fig:D}. It 
has a triangle for each vertex of $T$, with corners corresponding to the
factors $F_i$ or $G_i$ of the vertex group there. The edges correspond to
peripheral subgroups, and the edge-pairings between triangles correspond
to amalgamations. The triangles assemble into a triangulated
$(\abs{T}+2)$--gon with dual graph $T$. (Here, $\abs{T}$ denotes the
number of vertices of $T$.) 

The orientations on edges indicate the standard generating sets of the
peripheral subgroups, relative to the indexing of the groups $F_i$ or
$G_i$. The orientation-reversing nature of the side pairings reflects the
fact that the amalgamations are performed using $\tau$. 

The \emph{peripheral subgroups} of $V_T$ (or of $W_T$) are defined to be
the remaining peripheral subgroups of the vertex groups that were not
assigned to edges of $T$. In terms of the diagram $D$, these are the
peripheral subgroups corresponding to the edges forming the boundary
$(\abs{T}+2)$--gon. 

Next we re-index the peripheral subgroups. Note that the boundary edges
along $D$ are coherently oriented. Start with one and let $\nu_0$ be the
index of the corresponding peripheral subgroup. Following the
orientation, let $\nu_1$ be the index of the next edge along $\partial
D$. Repeat in this way and define the indices $\nu_0 \dotsc, \nu_m$,
allowing us to refer to the peripheral subgroups as $A_{\nu_0}, \dotsc,
A_{\nu_m}$ (or $B_{\nu_0}, \dotsc, B_{\nu_m}$). Note, $m = \abs{T} + 1$.  

\subsection*{The groups $S_{T,n}$ and $G_{T,n}$} 
Fix a tree $T$ as above and let $m = \abs{T} + 1$. 
The group $S_{T,n}$ is defined to be a multiple HNN extension over $V_T$
with stable letters $r_1, \dotsc, r_m$, where $r_i$ conjugates the 
peripheral subgroup $A_{\nu_0}$ to $A_{\nu_i}$ via the automorphism
$\phi^n$. That is,
\begin{align}\label{smn}
S_{T,n} = \langle \, V_T, r_1, \dotsc, r_m \mid \ &r_i a_{\nu_0} r_i^{-1} =
\phi^n(a_{\nu_i}), \notag \\ &r_i b_{\nu_0} r_i^{-1} = \phi^n(b_{\nu_i})  
\text{ for each } i \, \rangle.
\end{align} 
Thus $S_{T,n}$ is the fundamental group of a graph of groups whose
underlying graph is the $m$--rose (having one vertex and $m$ loops). The
vertex group is $V_T$ and the edge groups are all $F$. 

We define $G_{T,n}$ in a similar manner, but \emph{without twisting}. It
is a multiple HNN extension over $W_T$ with stable letters $s_1, \dotsc,
s_m$, where $s_i$ conjugates $B_{\nu_0}$ to $B_{\nu_i}$ via the
identity map:
\begin{align}\label{gmn1}
G_{T,n} = \langle \, W_T, s_1, \dotsc, s_m \mid \ &s_i a_{\nu_0} s_i^{-1} =
a_{\nu_i}, \notag \\ &s_i b_{\nu_0} s_i^{-1} = b_{\nu_i}, \, s_i {c}_{\nu_0}
s_i^{-1} = {c}_{\nu_i} \text{ for each } i \, \rangle.
\end{align}
Again, $G_{T,n}$ is the fundamental group of a graph of groups over the
$m$--rose. The vertex group is $W_T$ and the edge groups are all $G$.

\section{The CAT(0) structure} \label{sec:cat0}

In this section we build the CAT(0) structure for the group
$G_{T,n}$. 

\subsection*{The space $Z$} Recall that we have chosen a specific 
monotone palindromic automorphism $\phi \co F \to F$ given by $\phi(x) =
xyx$, $\phi(y) = x$. Let 
\begin{equation}\label{G0presentation}
G_0 \ = \ \langle x, y\rangle \rtimes_{\phi} \langle {t_0} \rangle \ 
= \ \langle \, x, y, {t_0} \mid {t_0}x{t_0}^{-1} = xyx, \
{t_0}y{t_0}^{-1} = x \, \rangle. 
\end{equation}
T. Brady \cite{TBr} has constructed a piecewise Euclidean, locally CAT(0)
$2$--complex $Z_0$ with fundamental group $G_0$. This $2$--complex has
two vertices, four edges, and two $2$--cells, consisting of a Euclidean
octagon and quadrilateral as shown in Figure \ref{fig:Z0complex}. The
angle $\alpha \in (0,\pi)$ is a free parameter, and the rest of the
geometry (up to scaling) is then determined. It is easy to check that
both vertices satisfy the link condition, making $Z_0$ locally CAT(0). 
\begin{figure}[ht]
\labellist
\hair 2pt
\small
\pinlabel {$\tau_{Z_0}$} [r] at 31 6
\pinlabel {$\tau_{Z_0}$} [r] at 130 38

\pinlabel {$\textcolor[HTML]{37bc62}{\alpha}$} [Bl] at 46 122
\pinlabel {$\textcolor[HTML]{37bc62}{\alpha}$} [Bl] at 144 86
\pinlabel {$\textcolor[HTML]{37bc62}{\alpha}$} [tl] at 145 51.5
\pinlabel {$\textcolor[HTML]{37bc62}{\alpha}$} [tl] at 47 18
\pinlabel {$\textcolor[HTML]{37bc62}{\pi}$} [l] at 16.5 68
\pinlabel {$\textcolor[HTML]{37bc62}{\pi}$} [r] at 65 68

\pinlabel {$z_0$} [r] at 6 31
\pinlabel {$z_0$} [l] at 74.5 31
\pinlabel {$z_0$} [r] at 6 104
\pinlabel {$z_0$} [l] at 74.5 104

\pinlabel {$\textcolor[HTML]{0000dd}{x}$} [tl] at 43.5 101.5
\pinlabel {$\textcolor[HTML]{0000dd}{y}$} [Bl] at 43 38.5
\pinlabel {$\textcolor[HTML]{0000dd}{t_0}$} [l] at 141.5 68
\endlabellist
\includegraphics[width=2.5in]{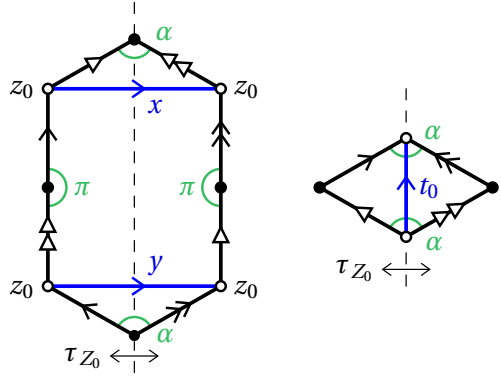}
\caption{The piecewise Euclidean $2$--complex $Z_0$ and its isometric
  involution $\tau_{Z_0}$.}\label{fig:Z0complex} 
\end{figure}

The figure also shows three arcs crossing the interiors of the
$2$--cells, representing the elements $x$, $y$, and ${t_0}$ in $\pi_1(Z_0,
z_0)$; we leave it to the reader to verify that $\pi_1(Z_0,z_0)$ indeed
has the presentation \eqref{G0presentation} relative to these
generators. 

Reflection of each $2$--cell across the vertical dotted lines in Figure
\ref{fig:Z0complex} respects the edge identifications, and induces an
isometric involution $\tau_{Z_0} \co Z_0 \to Z_0$. The induced
homomorphism $\tau_{G_0} \co G_0 \to G_0$ is given by $\tau_{G_0}(x) =
\overline{x}$, $\tau_{G_0}(y) = \overline{y}$, and $\tau_{G_0}({t_0}) =
{t_0}$. 

Let $t = (t_0)^n$ and note that the index-$n$ subgroup $\langle x,
y, t\rangle \triangleleft G_0$ is the group $G$ defined earlier. The
corresponding covering space $Z$ of $Z_0$ has a locally CAT(0) structure
made from $n$ octagons and $n$ quadrilaterals. The involution
$\tau_{Z_0}$ lifts to an isometric involution $\tau_Z \co Z \to Z$, with
induced homomorphism $\tau_G$. The case $n=3$ is shown in Figure
\ref{fig:Zcomplex}. 
\begin{figure}[ht]
\labellist
\hair 2pt
\small
\pinlabel {$\tau_Z$} [r] at 11 75
\pinlabel {$\tau_Z$} [r] at 65 93
\pinlabel {$\tau_Z$} [r] at 119 40
\pinlabel {$\tau_Z$} [r] at 173 57
\pinlabel {$\tau_Z$} [r] at 227 4
\pinlabel {$\tau_Z$} [r] at 281 21
\endlabellist
\includegraphics[width=4.5in]{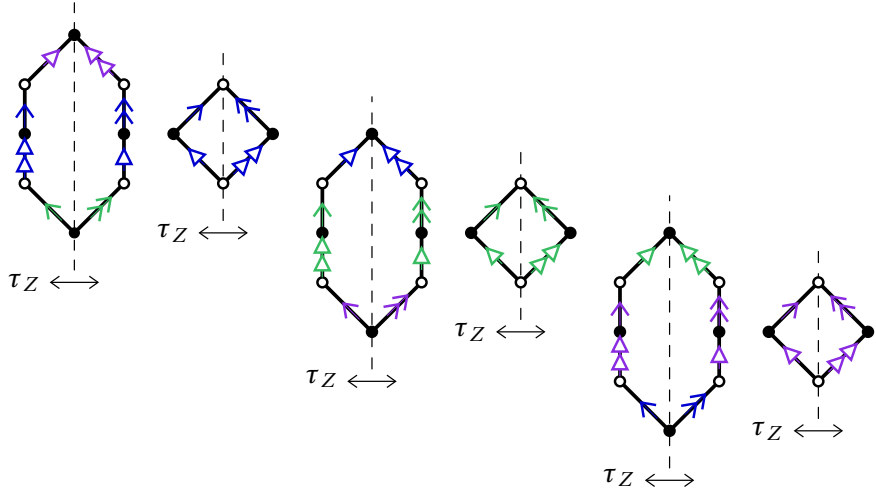}
\caption{The locally CAT(0) $2$--complex $Z$ and its involution
  $\tau_Z$, when $n=3$.}\label{fig:Zcomplex} 
\end{figure}

To summarize, we now have a locally CAT(0) space $Z$ with fundamental
group $G$, and an isometric involution $\tau_Z \co Z \to Z$ whose induced
homomorphism is given by the involution $\tau_G$. 

\subsection*{The space $K_T$} 
We shall need the following ``gluing with a tube'' result. 

\begin{proposition}[\cite{BH}, II.11.13]\label{gluing} 
Let $X$ and $A$ be locally CAT(0) metric spaces. If $A$ is compact and
$\phi, \psi \co A \to X$ are locally isometric immersions, then the
quotient of $X \sqcup (A \times [0,1])$ by the equivalence relation
generated by $(a,0) \sim \phi(a); (a,1) \sim \psi(a)$, $\forall a \in
A$ is locally CAT(0). 
\end{proposition}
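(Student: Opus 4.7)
The plan is to equip $Y$ with its quotient length metric and verify local CAT(0)-ness point by point. For $p \in Y$ whose preimage lies entirely in $X \setminus (\phi(A) \cup \psi(A))$, a small ball about $p$ is isometric to a ball in $X$ and is therefore CAT(0). For points whose preimage lies in $A \times (0,1)$, a small ball is isometric to an open set in the product $A \times \R$, which is locally CAT(0) since $A$ is. The substantive case is a glued boundary point, which by symmetry we may take to be $p = [a,0] = [\phi(a)]$.

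For such a $p$, I would first use compactness of $A$ together with the local injectivity of $\phi$ to write $\phi^{-1}(\phi(a)) = \{a_1, \ldots, a_k\}$, with $a_1 = a$. Next I would choose pairwise disjoint convex balls $U_i \ni a_i$ on each of which $\phi|_{U_i}$ is an isometric embedding, together with a convex ball $V \ni \phi(a)$ in $X$ contained in $\bigcap_i \phi(U_i)$. A small neighborhood of $p$ in $Y$ is then isometric to the space built from $V$ and the half-tubes $U_i \times [0,\epsilon)$ by identifying $U_i \times \{0\}$ with $\phi(U_i) \subseteq V$ via $\phi|_{U_i}$.

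The gluing takes place along locally convex subspaces in both factors. Each $U_i \times \{0\}$ is a boundary face of a product with an interval and is thus totally geodesic in $U_i \times [0,\epsilon)$. Each $\phi(U_i)$ is convex in $V$ by the following short argument: given $\phi(b), \phi(c) \in \phi(U_i)$, the convex ball $U_i$ contains a geodesic from $b$ to $c$ whose image under the isometric embedding $\phi|_{U_i}$ is a path in $V$ of length $d_V(\phi(b),\phi(c))$, and by uniqueness of geodesics in the CAT(0) ball $V$ this image coincides with the $V$-geodesic. Iteratively applying the local CAT(0) gluing theorem for locally convex subspaces (\cite{BH}, II.11.6) to attach each tube $U_i \times [0,\epsilon)$ to $V$ along these subspaces produces a locally CAT(0) neighborhood of $p$.

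The main obstacle I anticipate is the bookkeeping around multiple preimages: because $\phi$ and $\psi$ are only locally isometric immersions, several of the $a_i$ may collide under $\phi$, so the local structure near $p$ is a multi-way gluing rather than a pairwise one. Compactness of $A$ keeps the preimage finite, and the disjointness of the $U_i$ allows the iterative gluing to proceed independently. Symmetric reasoning handles points of the form $[a,1]$ using $\psi$, completing the verification.
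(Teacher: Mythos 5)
The paper cites this result directly from Bridson--Haefliger (II.11.13) and does not prove it, so there is no internal proof to compare against; your local-gluing strategy is a plausible alternative route, but as written it breaks at the central step. You posit ``a convex ball $V \ni \phi(a)$ in $X$ contained in $\bigcap_i \phi(U_i)$.'' When $A$ has lower dimension than $X$ --- which is exactly the situation in this paper, where $A$ is the $2$--complex $Z$ and $X$ is the $6$--complex $K = Z_0 \times Z_1 \times Z_2$ --- the set $\phi(U_i)$ is nowhere dense in $X$ and contains no $X$--ball whatsoever, so no such $V$ exists. The statement $\phi(U_i) \subseteq V$ in the description of the local model has the inclusion the wrong way around. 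What you actually need is a small convex ball $V \ni \phi(a)$ in $X$ together with the replacements $U_i' := U_i \cap \phi^{-1}(V)$, gluing the half-tube $U_i' \times [0,\epsilon)$ to $V$ along the subset $\phi(U_i') = \phi(U_i)\cap V$. One then argues (essentially as you do, via uniqueness of geodesics in $U_i$ and $V$) that $U_i'$ is convex in $U_i$ and $\phi(U_i)\cap V$ is convex in $V$, and iterates II.11.6.

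Two further points need attention before the argument closes. First, the equivalence class of a glued point $p = [\phi(a)]$ need not come only from $\phi$: if $\phi(a) \in \psi(A)$ as well, then $p$ is also $[(b_j,1)]$ for each $b_j \in \psi^{-1}(\phi(a))$, and the local model must attach tubes simultaneously at the $t=0$ and $t=1$ ends; ``symmetric reasoning'' at the end of your proof treats the two ends separately and does not cover this overlap case. Second, and less cosmetically, you still owe an argument that a small neighborhood of $p$ in the quotient $Y$, with the quotient (chain) length pseudo-metric, is genuinely \emph{isometric} to your iterated-gluing model; this is precisely where a gluing theorem such as II.11.3 or II.11.6 is doing all of the metric work, and passing over it leaves the proof as a sketch. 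With these repairs the local analysis ought to succeed, but it amounts to reproving a nontrivial piece of \cite{BH} rather than verifying a formality.
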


Let $Z_i$ and $Z_j$ be copies of $Z$ with fundamental groups $G_i$ and
$G_j$ repsectively. Let $Z_i \times Z_j$ be given the product metric. 
Define the map $f_{i,j} \co Z \to Z_i \times Z_j$ by $f_{i,j}(p) =
(\tau_z(p), p)$. Note that the induced homomorphism $(f_{i,j})_* \co G
\to G_i \times G_j$ is given by $x \mapsto \overline{x}_i x_j$,
$y \mapsto \overline{y}_i y_j$, $t \mapsto t_i t_j$. Metrically,
$f_{i,j}$ behaves as follows: 
\begin{align*}
d(f_{i,j}(p),f_{i,j}(q)) \ &= \ d((\tau_Z(p),p), (\tau_Z(q),q)) \\
&= \ (d(\tau_Z(p),\tau_Z(q))^2 + d(p,q)^2)^{1/2} \\
&= \ (2d(p,q)^2)^{1/2} \ = \ \sqrt{2}d(p,q). 
\end{align*}
Hence $f_{i,j}$ is an isometric embedding of the scaled metric space
$(\sqrt{2})Z$ into $Z_i \times Z_j$. 

Now we define the locally CAT(0) space $K_{T}$ with fundamental group
$W_T$. Let 
\[K = Z_0 \times Z_1 \times Z_2\] 
where each $Z_i$ is a copy of $Z$. Thus, $K$ is locally CAT(0) and 
has fundamental group $W = G_0 \times G_1 \times G_2$. Fix a basepoint 
$v\in Z$ and let $v_i \in Z_i$ be the corresponding points. Each product
space $Z_i \times Z_{i+1}$ isometrically embeds into $K$ using the
basepoint $v_{i-1}$ as the missing coordinate (indices mod~$3$). Define
the \emph{peripheral subspace} $L_i$ to be the image of the map 
\begin{equation}\label{inclusion}
Z \ \overset{f_{i, i+1}}{\longrightarrow} \ Z_i \times Z_{i+1} \
\hookrightarrow \ K.
\end{equation} 
Note that $L_i$ has fundamental group $B_i < W$ and the induced map
$\pi_1(Z, v) \to W$ is the standard inclusion map $x \mapsto a_i$, $y
\mapsto b_i$, $t \mapsto c_i$ with image $B_i$. 

The space $K_T$ is formed from copies of $K$ in the same way that $W_T$
is built from copies of $W$. Take a copy of $K$ for each vertex of $T$, 
with all indices re-named to agree with the triple of indices assigned to
that vertex. Whenever $B_i$ and $B_j$ were amalgamated in $W_T$, glue the 
ends of a \emph{tube} $(\sqrt{2})Z \times [0,1]$ to the peripheral
subspaces $L_i$ and $L_j$, using the isometric embedding
\eqref{inclusion} from $(\sqrt{2})Z \times \{0\}$ to the copy of $K$
containing $L_i$, and using a similar isometric embedding
\[Z \overset{\tau_Z}{\longrightarrow} \ Z \ \overset{f_{j,
 j+1}}{\longrightarrow} \ Z_j \times Z_{j+1} \ \hookrightarrow \ K \]
from $(\sqrt{2})Z \times \{1\}$ to the copy of $K$ containing 
$L_j$. The involution $\tau_Z$ is being used to obtain the correct
identification between the subgroups $B_i$ and $B_j$. The resulting space
$K_T$ has fundamental group $W_T$, and is locally CAT(0) by Proposition
\ref{gluing}. In particular, $W_T$ is CAT(0). 

\begin{remark}\label{VTisCAT0}
The reasoning above also shows that $V_T$ is CAT(0). One simply
re-defines $Z$ to be the space $S^1 \vee S^1$ with any path metric (which
will be locally CAT(0)). There is an obvious isometric involution
$\tau_Z$ which reverses the direction of each loop in $Z$, and induces
$\tau_F \co F \to F$. The rest is entirely similar. 
\end{remark}

\subsection*{The space $K_{T,n}$}
Inside $K_T$ there are peripheral subspaces $L_{\nu_0}, \dotsc,
L_{\nu_m}$. For each $j = 1, \dotsc, m$ glue the ends of a tube
$(\sqrt{2})Z \times [0,1]$ to $L_{\nu_0}$ and $L_{\nu_j}$ using the
isometric embeddings \eqref{inclusion} from $(\sqrt{2})Z \times \{0\}$
and $(\sqrt{2})Z \times \{1\}$ to the appropriate copies of $K$ in
$K_T$. The resulting space $K_{T,n}$ is the total space of a graph of
spaces corresponding to the description \eqref{gmn1} of $G_{T,n}$ as the
fundamental group of a graph of groups. In particular, $K_{T,n}$ has
fundamental group $G_{T,n}$. It is locally CAT(0) by Proposition
\ref{gluing}. Thus we have proved: 

\begin{theorem}\label{GTnisCAT0}
$G_{T,n}$ is CAT(0). \qed
\end{theorem}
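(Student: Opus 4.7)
The plan is to conclude the theorem from the space $K_{T,n}$ that has been constructed in the preceding paragraphs: since $K_{T,n}$ is a compact, connected, locally CAT(0) metric space whose fundamental group is $G_{T,n}$, its universal cover is a CAT(0) space carrying a proper cocompact isometric action of $G_{T,n}$, so $G_{T,n}$ is a CAT(0) group. The proof therefore reduces to two assertions: $K_{T,n}$ is locally CAT(0), and $\pi_1(K_{T,n}) \cong G_{T,n}$.

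For the local CAT(0) condition I would proceed in three stages matching the construction. Each vertex block $K = Z_0 \times Z_1 \times Z_2$ is a product of three copies of Tom Brady's locally CAT(0) complex $Z$, and so is locally CAT(0) in the product metric. To pass from copies of $K$ to $K_T$, one glues a tube $(\sqrt{2})Z \times [0,1]$ along each edge of $T$, with the two ends attached via the map \eqref{inclusion} on one side and its composition with $\tau_Z$ on the other; the distance computation for $f_{i,j}(p) = (\tau_Z(p),p)$ given in the excerpt shows that this map multiplies distances by exactly $\sqrt{2}$, so both ends of each tube are locally isometric immersions of the compact locally CAT(0) space $(\sqrt{2})Z$, and Proposition \ref{gluing} applies at each edge. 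Iterating the same argument to attach a further tube for each stable letter $s_j$, along the remaining peripheral subspaces $L_{\nu_0}, \dotsc, L_{\nu_m}$, gives $K_{T,n}$ as a locally CAT(0) space.

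For the fundamental group I would read off the graph-of-spaces structure that mirrors \eqref{gmn1}. The product structure on $K$ gives $\pi_1(K) \cong W$, with each peripheral subspace $L_i$ realizing the inclusion $B_i \hookrightarrow W$ specified in Section \ref{sec:groups}. Each tube in the $K_T$ stage realizes an amalgamation of two vertex groups along $B_i$ and $B_j$, with the composition with $\tau_Z$ on one end implementing the amalgamation by $\tau$ at the group level; Bass--Serre theory then yields $\pi_1(K_T) \cong W_T$. The final round of tubes contributes $m$ HNN extensions with stable letters $s_1, \dotsc, s_m$, each conjugating $B_{\nu_0}$ to $B_{\nu_j}$ by the identity because both ends of that tube are attached by the same map \eqref{inclusion}; this recovers the presentation \eqref{gmn1} on the nose, so $\pi_1(K_{T,n}) \cong G_{T,n}$.

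The only delicate point, and essentially the whole reason the argument works, is the uniform $\sqrt{2}$ rescaling: because $f_{i,j}$ is an isometric embedding of $(\sqrt{2})Z$ into $Z_i \times Z_j$, each peripheral subspace $L_i \subset K$ carries the metric of $(\sqrt{2})Z$ rather than $Z$, so a tube of cross-section $(\sqrt{2})Z$ is exactly what is required for both attaching maps to be genuine isometric immersions and for the hypotheses of Proposition \ref{gluing} to be satisfied at every gluing step. Once this rescaling is consistently in place, and the matching between the graph-of-spaces structure of $K_{T,n}$ and the graph-of-groups presentation \eqref{gmn1} is checked, the theorem follows immediately.
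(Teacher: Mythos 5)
Your argument is correct and is essentially the paper's own proof: Section \ref{sec:cat0} constructs $K_{T,n}$ exactly as you describe, verifies local non-positive curvature at each gluing step via the $\sqrt{2}$ rescaling of $Z$ and Proposition \ref{gluing}, and identifies $\pi_1(K_{T,n})$ with $G_{T,n}$ through the graph-of-spaces structure mirroring \eqref{gmn1}. The theorem is stated with an immediate \qed precisely because this preceding discussion constitutes the proof; your write-up simply makes the fundamental-group bookkeeping and the role of the uniform $\sqrt{2}$ factor more explicit.
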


\section{Embedding results} \label{sec:embedding}

In this section we define the embedding $S_{T,n} \to G_{T,n}$
and prove that it is injective. The map is defined step by step,
following the constructions defining $S_{T,n}$ and $G_{T,n}$. In several
places we use the following lemma to establish injectivity. It is a
special case of a basic result of Bass \cite{Bass}, reformulated
slightly. 

\begin{lemma}[Injectivity for graphs of groups] \label{basslemma}
Suppose $\mathscr{A}$ and $\mathscr{B}$ are graphs of groups such that
the underlying graph $\Gamma_{\mathscr{A}}$ of $\mathscr{A}$ is a
subgraph of the underlying graph of $\mathscr{B}$. Let $A$ and $B$ be
their respective fundamental groups. Suppose that there are injective
homomorphisms $\psi_e \co A_e \to B_e$ and $\psi_v \co A_v \to B_v$
between edge and vertex groups, for all edges $e$ and vertices $v$ in
$\Gamma_{\mathscr{A}}$, which are compatible with the edge-inclusion
maps. 

(That is, whenever $e$ has initial vertex $v$, the diagram 
\[\begin{CD}
A_e @>>> A_{v}\\
@V{\psi_e}VV        @V{\psi_{v}}VV\\
B_e @>>> B_{v}
\end{CD}\]
commutes.) 

If $\psi_e(A_e) = \psi_{v} (A_{v}) \cap B_e$ whenever $e$ has initial
vertex $v$, then the induced homomorphism $\psi \co A \to B$ is
injective. 
\end{lemma}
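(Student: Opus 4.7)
The plan is to prove the lemma using the standard normal form machinery from Bass--Serre theory, combined with an induction on word length. The hypothesis $\psi_e(A_e) = \psi_v(A_v)\cap B_e$ is precisely what is needed to ensure that every reduction forced by Britton's lemma in $B$ descends to a genuine reduction in $A$.

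\medskip

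First I would fix a maximal tree $T_A$ in $\Gamma_{\mathscr{A}}$ and extend it to a maximal tree $T_B$ in $\Gamma_{\mathscr{B}}$. Relative to these trees, every element $g \in A$ has a representative of the form
\[
g \ = \ a_0 \, e_1 \, a_1 \, e_2 \, \cdots \, e_n \, a_n
\]
where each $e_i$ is an edge of $\Gamma_{\mathscr{A}}$, the consecutive edges form a closed edge-path at the basepoint, and each $a_i$ lies in the appropriate vertex group $A_{v_i}$. Applying $\psi$ edge by edge gives an analogous expression for $\psi(g)$ in $B$, using the corresponding edges of $\Gamma_{\mathscr{B}}$ and the elements $\psi_{v_i}(a_i)$. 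The key point is that because $T_A \subset T_B$, every edge of $\Gamma_{\mathscr{A}}$ that is a stable letter in $A$ remains a stable letter in $B$, and every tree edge of $A$ is a tree edge of $B$.

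\medskip

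Now suppose $g \in \ker(\psi)$, and induct on the length $n$ of its normal form. If $n=0$, then $g = a_0 \in A_{v_0}$ and $\psi_{v_0}(a_0) = 1$, so $a_0 = 1$ by injectivity of $\psi_{v_0}$. For $n \geq 1$, assume $g$ is written in reduced form in $A$, meaning that at every index $i$ with $e_i = \overline{e_{i+1}}$, the element $a_i$ does \emph{not} lie in the image of the edge-inclusion $A_{e_i} \to A_{v_i}$. Since $\psi(g) = 1$ in $B$, the normal form theorem for graphs of groups forces the corresponding expression in $B$ to be unreduced: there must exist such an index $i$ at which $\psi_{v_i}(a_i)$ lies in the image of $B_{e_i} \to B_{v_i}$. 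By the compatibility of the square and the hypothesis $\psi_{e_i}(A_{e_i}) = \psi_{v_i}(A_{v_i}) \cap B_{e_i}$ (interpreted inside $B_{v_i}$ after pushing forward by the edge-inclusion), we conclude that $\psi_{v_i}(a_i)$ is actually in the image of $\psi_{v_i}$ applied to the edge-inclusion of $A_{e_i}$. Injectivity of $\psi_{v_i}$ then implies $a_i$ lies in the image of $A_{e_i} \to A_{v_i}$, contradicting reducedness in $A$.

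\medskip

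Consequently $g$ was not reduced to begin with: we can apply a Britton-type reduction in $A$ (absorbing $a_i$ through the edge group to cancel $e_i$ with $e_{i+1}$) to shorten the normal form, and then invoke the inductive hypothesis. The main subtlety I anticipate is bookkeeping: verifying that the intersection hypothesis, which is phrased in the edge group $B_e$, transfers correctly to a statement about the image inside the vertex group $B_v$, and handling both the tree-edge case (where reduction is just a vertex-group equality) and the stable-letter case (where reduction is a genuine Britton move) within a single argument. Once that is organized, the induction closes and $\psi$ is injective.
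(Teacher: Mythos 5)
Your proof is correct, and the central insight is the same one the paper exploits, but you realize it in a different framework. The paper does not re-derive the normal form machinery: it observes that your $\psi_e$, $\psi_v$ constitute a morphism of graphs of groups in the sense of Bass, and then simply cites Proposition 2.7 of \cite{Bass}, which reduces injectivity of $\psi\colon A\to B$ to injectivity of the induced coset maps $A_v/A_e \to B_v/B_e$. The intersection hypothesis $\psi_e(A_e)=\psi_v(A_v)\cap B_e$ is then used in a three-line argument to verify that coset condition: if $\psi_v(a_1)B_e = \psi_v(a_2)B_e$ then $\psi_v(a_1a_2^{-1})\in B_e$, hence in $\psi_e(A_e)$, hence $a_1a_2^{-1}\in A_e$ by injectivity of $\psi_v$. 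Your approach instead works directly with reduced words and Britton-type pinches, in effect reproving the relevant special case of Bass's proposition rather than quoting it. That is a perfectly valid, more self-contained route; it does make you carry the bookkeeping burden (maximal trees $T_A\subset T_B$, tree-edge versus stable-letter reductions, and noting that the hypothesis, stated for $e$ with initial vertex $v$, covers $\overline{e}$ as well and thus the terminal vertex of $e_i$) that the paper sidesteps by delegating to Bass. The one place to be a bit more explicit in a final write-up is the sentence where you pass from ``$\psi_{v_i}(a_i)$ lies in the image of $B_{e_i}\to B_{v_i}$'' to ``$a_i$ lies in the image of $A_{e_i}\to A_{v_i}$'': here you are applying the hypothesis with the \emph{reversed} edge $\overline{e_i}$ (whose initial vertex is $v_i$), and you should say so, since that is precisely what makes the intersection hypothesis land in the right vertex group.
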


\begin{remark}
Given the initial assumptions, it is always true that $\psi_e(A_e)
\subset (\psi_v(A_v) \cap B_e)$. In practice one only needs to verify
that $\psi_e(A_e)$ contains $\psi_v(A_v) \cap B_e$. 
\end{remark}

\begin{proof}
The homomorphisms $\psi_e$, $\psi_v$ combine to give a morphism of graphs
of groups in the sense of Bass \cite{Bass}. According to Proposition 2.7 of
\cite{Bass}, $\psi\co A \to B$ will be injective if, whenever $e$ has
initial vertex $v$, the function $A_v /A_e \to B_v / B_e$ induced by
$\psi_v$ is injective. 

To prove that the latter statement holds, suppose that the cosets
$\psi_v(a_1)B_e$ and $\psi_v(a_2)B_e$ are equal 
for some $a_1, a_2 \in A_v$. Then $\psi_v(a_1a_2^{-1}) \in B_e$, and
hence (by the main assumption) $\psi_v(a_1a_2^{-1}) \in
\psi_e(A_e)$. Since $\psi_e$ is injective (and agrees with $\psi_v$),
$a_1 a_2^{-1} \in A_e$, and therefore $a_1 A_e = a_2 A_e$. 
\end{proof}

\begin{lemma}\label{intersect}
Let $\iota \co V \to W$ be inclusion. Then $\iota(A_i) = \iota(V) \cap
B_i$ for each $i$. 
\end{lemma}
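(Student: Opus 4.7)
My plan is to prove the nontrivial inclusion $\iota(V) \cap B_i \subseteq \iota(A_i)$ by a direct coordinate analysis; the reverse inclusion is immediate since $A_i \subseteq V$ and $A_i \subseteq B_i$ by construction.

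Fix $w \in \iota(V) \cap B_i$. First I would use the product decompositions. Since $B_i < G_i \times G_{i+1} < W$, the element $w$ has trivial component in the factor $G_{i-1}$ (indices mod~$3$). Combined with $w \in \iota(V) = F_0 \times F_1 \times F_2$, this forces $w$ to lie in the subgroup $F_i \times F_{i+1} \leq G_i \times G_{i+1}$.

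Next I would pull $w$ back through the parametrization of $B_i$. By definition, $B_i$ is the image of the injective homomorphism $\tau_G \times \id \co G \to G_i \times G_{i+1}$, so we can write $w = (\tau_G(g), g)$ for a unique $g \in G = F \rtimes_{\phi^n} \langle t\rangle$. Write $g = ht^k$ with $h \in F$ and $k \in \Z$. Projecting $w$ to the $\langle t_{i+1}\rangle$-quotient of $G_{i+1}$ gives $t_{i+1}^k$, which must be trivial since $w \in F_i \times F_{i+1}$; hence $k=0$ and $g = h \in F$. On the $F$-subgroup the involutions agree, $\tau_G|_F = \tau_F$, so $w = (\tau_F(h), h)$, which is exactly the image of $h$ under $\tau_F \times \id \co F \to F_i \times F_{i+1}$. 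This image is $A_i$ by definition, so $w \in \iota(A_i)$, as required.

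The only subtlety is confirming that $\tau_G \times \id$ is genuinely injective with image $B_i$ (so that the pullback $g$ exists and is unique) and that the $t$-component argument is legitimate. Both are immediate from the semidirect product structure $G = F \rtimes_{\phi^n} \langle t\rangle$, which yields a well-defined quotient homomorphism $G \to \langle t\rangle$ killing $F$. I do not anticipate any serious obstacle; the lemma is essentially a bookkeeping statement reflecting that the enlargement $V \hookrightarrow W$ affects only the ``$t$-direction,'' which is disjoint from the $F$-generators used to define the peripheral subgroups.
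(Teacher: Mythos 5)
Your proof is correct, but the route is genuinely different from the paper's. The paper expresses the element as a word $w(\overline{x}_0 x_1,\overline{y}_0 y_1,t_0t_1)$, projects onto \emph{both} coordinates of $G_0\times G_1$, and identifies the resulting HNN normal forms $u(x,y)$ and $v(x,y)$ of the same element of $G$, concluding $u=v$ by uniqueness of normal forms. You instead exploit the fact that $\tau_G\times\id$ is injective (split by the second-coordinate projection), parametrize the element as $(\tau_G(g),g)$ for a unique $g\in G$, and use the retraction $G\to\langle t\rangle$ coming from the semidirect-product structure to show the $t$--exponent of $g$ vanishes, whence $g\in F$ and $\tau_G(g)=\tau_F(g)$. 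Both arguments encode the same underlying fact — the element has no $t$--content — but yours is leaner: it needs only one coordinate, and it substitutes the group retraction for the paper's appeal to normal-form uniqueness and its mildly delicate re-reading of the first factor via the basis $\{\overline{x}_0,\overline{y}_0\}$ (which quietly invokes the palindromic hypothesis on $\phi^n$ a second time, where you absorb it once and for all into the statement that $\tau_G$ is a well-defined involution restricting to $\tau_F$). Your approach buys a cleaner separation between the "combinatorics of $B_i$" and the "structure of $G$"; the paper's buys a more self-contained word-level computation.
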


\begin{proof}
Without loss of generality let $i = 0$. Note that $\iota(A_0)$ and
$\iota(V) \cap {B}_0$ are both contained in the subgroup ${G}_0 \times
{G}_1$, so it suffices to show that $\iota(A_0) = \iota(F_0 \times F_1)
\cap {B}_0$ in ${G}_0 \times {G}_1$. One direction, $\iota(A_0)
\subset \iota(F_0 \times F_1) \cap {B}_0$, is obvious. 

For the other direction, consider an element of
$\iota(F_{0}\times F_{1}) \cap B_0$. It can be expressed as a word
$w(\overline{x}_0 x_1, \overline{y}_0 y_1, t_0 t_1)$, which equals
$w(\overline{x}_0, \overline{y}_0, t_0)w(x_1, y_1, t_1)$ in
$G_{0}\times G_{1}$. Being in $\iota(F_{0}\times F_{1})$ it also
has an expression of the form $u(\overline{x}_0, \overline{y}_0)
v(x_1, y_1)$ where $u$ and $v$ are reduced words in the free
group. Projecting onto the second factor of $G_{0} \times G_{1}$, one
obtains the equation in $G$:
\[ w(x, y, t) = v(x, y).\] 
Considering $G$ as an HNN extension with vertex group $F$ and stable
letter $t$, the right hand side is a word in normal form (that is, a word
of length $1$ consisting of an element of $F$), and therefore gives the
(unique) normal form representative for the element $w$. Similarly,
considering $\{\overline{x}_0, \overline{y}_0\}$ as a basis for
$F_{0}$, projecting onto the first factor gives the equation in $G$ 
\[w(x, y, t) = u(x,y)\]
and hence $u(x,y)$ is also the normal form for $w$. We conclude that
$u$ and $v$ represent the same element of $F$. Since both words are reduced,
they are equal as words and so $u(\overline{x}_0,
\overline{y}_0) v(x_1, y_1) = 
u(\overline{x}_0, \overline{y}_0) u(x_1, y_1) = 
u(\overline{x}_0x_1, \overline{y}_0 y_1) \in \iota(A_0)$. 
\end{proof}

\begin{proposition}
The inclusion maps $F_i \hookrightarrow G_i$ induce an injective
homomorphism $V_T \to W_T$. 
\end{proposition}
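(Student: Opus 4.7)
The plan is to apply Lemma~\ref{basslemma} with $A = V_T$ and $B = W_T$. Both groups come equipped with graph of groups decompositions over the same underlying tree $T$: the $V_T$ decomposition has a copy of $V$ at each vertex, an edge group isomorphic to $F$ at each edge, and edge-inclusion maps identifying $F$ with the designated peripheral subgroup $A_i$ in each adjacent vertex group (with one end twisted by $\tau_F$ to encode the amalgamation); the $W_T$ decomposition is identical with $W$, $G$, and $B_i$ in place of $V$, $F$, and $A_i$. I would then define a morphism between these two graphs of groups by setting $\psi_v = \iota \co V \hookrightarrow W$ at every vertex and $\psi_e$ to be the standard inclusion $F \hookrightarrow G$ at every edge.

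Next I would verify that this morphism is compatible with the edge-inclusion maps. For an edge $e$ at a vertex $v$ assigned to the peripheral subgroup $A_i$ in $V$ (and hence $B_i$ in $W$), the two compositions
\[
F \,\longrightarrow\, A_i \hookrightarrow V \,\xrightarrow{\iota}\, W
\qquad\text{and}\qquad
F \,\xrightarrow{\psi_e}\, G \,\longrightarrow\, B_i \hookrightarrow W
\]
both send $x \mapsto \overline{x}_i x_{i+1}$ and $y \mapsto \overline{y}_i y_{i+1}$, so they agree. Because the amalgamating involution $\tau$ acts by the same formulas on $F$ and on $G$ (namely $x \mapsto \overline{x}$, $y \mapsto \overline{y}$), the compatibility also holds at the opposite endpoint of every edge.

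Finally, the main hypothesis $\psi_e(A_e) = \psi_v(A_v) \cap B_e$ of Lemma~\ref{basslemma} translates, after identifying $A_e$ with the peripheral subgroup $A_i$ inside $V$ and $B_e$ with $B_i$ inside $W$, into the identity $\iota(A_i) = \iota(V) \cap B_i$ in $W$. This is exactly the content of Lemma~\ref{intersect}. Invoking Lemma~\ref{basslemma} therefore delivers injectivity of the induced homomorphism $V_T \to W_T$. I do not expect any serious obstacle, since all of the substantive work is already encapsulated in Lemma~\ref{intersect}; the only delicate step is to phrase the morphism of graphs of groups carefully so that the various compatibilities hold on the nose.
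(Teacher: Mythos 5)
Your proposal is correct and follows essentially the same route as the paper: both apply Lemma~\ref{basslemma} to the graph of groups decompositions of $V_T$ and $W_T$ over $T$, with the vertex and edge morphisms given by the inclusions $V \hookrightarrow W$ and $F \hookrightarrow G$, and both reduce the key hypothesis $\psi_e(A_e) = \psi_v(A_v) \cap B_e$ to Lemma~\ref{intersect}. The only cosmetic difference is that the paper fixes edge orientations and writes the edge-inclusions explicitly as $\tau_F \times \id$ and $\id \times \tau_F$ (resp. $\tau_G \times \id$, $\id \times \tau_G$), which makes the compatibility squares commute on the nose; your more informal phrasing of the twisting by $\tau$ conveys the same check.
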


Henceforth we will regard $V_T$ as a subgroup of $W_T$. 

\begin{proof}
We will use Lemma \ref{basslemma} since $V_T$ and $W_T$ are both
fundamental groups of graphs of groups with underlying graph $T$. 

To elaborate on the graph of groups structures of $V_T$ and $W_T$, fix an
orientation of each edge of $T$ and use these to specify the
edge-inclusion maps as follows. For $V_T$, each edge group is $F$ and
the two neighboring vertex groups are isomorphic to $V$. For the
\emph{initial} vertex, the inclusion map $F \to F_i \times F_{i+1}$
is $\tau_F \times \id$, and for the \emph{terminal} vertex, the
inclusion map $F \to F_j \times F_{j+1}$ is $\id \times \tau_F$. In the
case of $W_T$, the same convention is used: inclusion maps $G \to G_i
\times G_{i+1}$ are $\tau_G \times \id$ for initial vertices and
$\id\times \tau_G$ for terminal vertices. 

The inclusion maps $F_i \to G_i$ induce inclusions between corresponding
vertex groups $V \to W$. The compatibility diagrams become
\[\begin{CD}
F @>{\tau_F \times \id}>> F_i \times F_{i+1} @>>> V \\
@VVV     &  & @VVV\\
G @>{\tau_G \times \id}>> G_i \times G_{i+1} @>>> W
\end{CD} \ \ 
\text{or} \ \ 
\begin{CD}
F @>{\id \times \tau_F}>> F_i \times F_{i+1} @>>> V \\
@VVV     &  & @VVV\\
G @>{\id \times \tau_G}>> G_i \times G_{i+1} @>>> W 
\end{CD}
\]
and these clearly commute (all unnamed maps are inclusion). Thus there is
an induced homomorphism $V_T \to W_T$. The last condition needed by
Lemma \ref{basslemma} is provided by Lemma \ref{intersect}, and so
we conclude from \ref{basslemma} that $V_T \to W_T$ is injective. 
\end{proof}

\subsection*{Change of coordinates in $G_{T,n}$} 
We plan to use Lemma \ref{basslemma} to embed $S_{T,n}$ into $G_{T,n}$,
but first we must modify the graph of groups structure of $G_{T,n}$. The
modification amounts to a change in the choice of stable letters in the
multiple HNN extension description of $G_{T,n}$. Alternatively, it can
be seen as an application of Tietze transformations. 

Indeed, one can start with the presentation \eqref{gmn1} defining
$G_{T,n}$, add new generators $u_1, \dotsc, u_m$ and relations $u_i =
{c}_{\nu_i} s_i$, replace occurrences of $s_i$ with ${c}_{\nu_i}^{-1}
u_i$, and delete the generators $s_i$. The relation $s_i
a_{\nu_0}s_i^{-1} = a_{\nu_i}$ becomes ${c}_{\nu_i}^{-1} u_i a_{\nu_0}
u_i^{-1} {c}_{\nu_i} = a_{\nu_i}$, or equivalently, $u_i a_{\nu_0}
u_i^{-1} = {c}_{\nu_i} a_{\nu_i} {c}_{\nu_i}^{-1} =
\phi^{n}(a_{\nu_i})$. Similarly, the relation $s_i b_{\nu_0}s_i^{-1} =
b_{\nu_i}$ becomes $u_i b_{\nu_0} u_i^{-1} = \phi^n(b_{\nu_i})$ and the
relation $s_i {c}_{\nu_0} s_i^{-1} = {c}_{\nu_i}$ becomes $u_i
{c}_{\nu_0} u_i^{-1} = {c}_{\nu_i}$. Thus one obtains the new
presentation 
\begin{align}\label{gmn2}
G_{T,n} = \langle \, W_T, u_1, \dotsc, u_m \mid \ &u_i a_{\nu_0} u_i^{-1} =
\phi^n(a_{\nu_i}), \,\notag \\  &u_i b_{\nu_0} u_i^{-1} =
\phi^n(b_{\nu_i}), \, u_i {c}_{\nu_0} u_i^{-1} = {c}_{\nu_i} \text{ for
  each } i \, \rangle. 
\end{align}
This is evidently the presentation arising from a new description of
$G_{T,n}$ as a multiple HNN extension of $W_T$ with stable letters $u_1,
\dotsc, u_m$, where $u_i$ conjugates $B_{\nu_0}$ to $B_{\nu_i}$
via $\phi^n \times \id$. 

\begin{theorem}\label{injectivity}
The homomorphism $S_{T,n} \to G_{T,n}$ induced by the inclusion $V_T
\hookrightarrow W_T$ and the assigment $r_i \mapsto u_i$ is injective. 
\end{theorem}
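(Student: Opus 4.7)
The plan is to invoke Lemma \ref{basslemma} applied to the two graph-of-groups structures, on $S_{T,n}$ from \eqref{smn} and on $G_{T,n}$ from the twisted presentation \eqref{gmn2}. Both have underlying graph the $m$--rose; the vertex map $\psi_v \co V_T \to W_T$ was just proven to be injective; and the edge map $\psi_e \co F \to G$ is the standard inclusion $\langle a,b\rangle \hookrightarrow \langle a,b\rangle\rtimes_{\phi^n}\langle c\rangle$. For the $i$--th loop, the two inclusions of $F$ into $V_T$ send the generators $(a,b)$ to $(a_{\nu_0}, b_{\nu_0})$ and to $(\phi^n(a_{\nu_i}), \phi^n(b_{\nu_i}))$, while the corresponding inclusions of $G$ into $W_T$ send $(a,b,c)$ to $(a_{\nu_0}, b_{\nu_0}, c_{\nu_0})$ and to $(\phi^n(a_{\nu_i}), \phi^n(b_{\nu_i}), c_{\nu_i})$. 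The commuting squares required by Lemma \ref{basslemma} follow immediately, since on the generators $a,b \in F$ both routes produce the same element of $W_T$.

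The remaining, and only nontrivial, hypothesis of Lemma \ref{basslemma} is the intersection condition $\psi_e(A_e) = \psi_v(A_v) \cap B_e$, taken inside $W_T$. Since each edge inclusion $G \to W_T$ has image $B_{\nu_j}$ and the composite $F \to W_T$ has image $A_{\nu_j}$, this reduces to verifying
\[
V_T \cap B_{\nu_j} \ = \ A_{\nu_j} \qquad \text{inside } W_T
\]
for each peripheral index $j = 0, 1, \dotsc, m$. This is the main thing to prove, and is the expected obstacle, since Lemma \ref{intersect} only handles the corresponding identity at the level of a single vertex group.

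To bridge the gap, I would first establish that $V_T \cap W^{(v)} = V^{(v)}$ inside $W_T$, where $v$ is the vertex of $T$ carrying $B_{\nu_j}$ and $V^{(v)} \cong V$, $W^{(v)} \cong W$ are the vertex groups at $v$ in the amalgamations defining $V_T$ and $W_T$. Once this is in hand,
\[
V_T \cap B_{\nu_j} \ = \ (V_T \cap W^{(v)}) \cap B_{\nu_j} \ = \ V^{(v)} \cap B_{\nu_j} \ = \ A_{\nu_j},
\]
the last equality being Lemma \ref{intersect} applied at $v$ (with indices renumbered to match the triple assigned to $v$). The identity $V_T \cap W^{(v)} = V^{(v)}$ itself is a Bass--Serre-theoretic fact: the injection $V_T \hookrightarrow W_T$ arises from a morphism of graphs of groups with the same underlying tree $T$, and so induces a $V_T$--equivariant injection of Bass--Serre trees which is the identity on quotients. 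Any $g \in V_T \cap W^{(v)}$ fixes the vertex of the $W_T$--tree corresponding to $v$; its unique preimage vertex in the $V_T$--tree (with stabilizer $V^{(v)}$) must therefore also be fixed by $g$, forcing $g \in V^{(v)}$. With this refinement of Lemma \ref{intersect} in place, all the hypotheses of Lemma \ref{basslemma} are met, and the injectivity of $S_{T,n} \to G_{T,n}$ follows.
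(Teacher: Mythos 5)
Your argument is correct and follows the paper's proof essentially step for step: invoke Lemma \ref{basslemma}, check the commuting squares, reduce the intersection condition $V_T \cap B_{\nu_j} = A_{\nu_j}$ to the single-vertex statement of Lemma \ref{intersect} by first intersecting with the vertex group $W^{(v)}$. The only difference is that the paper simply asserts $W^{(v)} \cap V_T = V^{(v)}$ without elaboration, whereas you justify it via the $V_T$--equivariant injection of Bass--Serre trees; that argument is sound and is the natural way to fill in the step.
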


\begin{proof}
First, given the presentations \eqref{smn} and \eqref{gmn2}, it is clear
that the given assignment defines a homomorphism $S_{T,n} \to
G_{T,n}$. Furthermore, this is the homomorphism induced by the injective
maps on vertex and edge groups: $V_T \hookrightarrow W_T$ in the case of
the vertex, and $F \hookrightarrow G$ for each edge of the $m$--rose. The
corresponding compatibility diagrams are 
\[\begin{CD}
F @>{\id}>> A_{\nu_0} @>>> V_T \\
@VVV     &  & @VVV\\
G @>{\id}>> B_{\nu_0} @>>> W_T
\end{CD} \ \ \ \ 
\text{and} \ \ \ \ 
\begin{CD}
F @>{\phi^n}>> A_{\nu_i} @>>> V_T \\
@VVV     &  & @VVV\\
G @>{\phi^n \times \id}>> B_{\nu_i} @>>> W_T 
\end{CD}
\]
where $A_j$ and $B_j$ are canonically identified with $F$ and $G$
via their standard generating sets, and the unnamed maps are inclusion. 
These diagrams commute. 

We have all of the initial hypotheses of Lemma \ref{basslemma}
satisfied. It remains to verify that $A_{\nu_i} = V_T \cap
B_{\nu_i}$ in $W_T$ for $i = 0, \dotsc, m$. Consider the vertex of
$T$ whose triple of indices includes $\nu_i$. Let $V$ and $W$ be the
vertex groups at that vertex (for the graph of groups decompositions of
$V_T$ and $W_T$). Then $A_{\nu_i}$ and $V_T \cap B_{\nu_i}$ are both
contained in $W$. Moreover $W \cap V_T = V$, and so it suffices to show
that $A_{\nu_i} = V \cap B_{\nu_i}$. This holds by Lemma
\ref{intersect}. Hence, by Lemma \ref{basslemma}, the map $S_{T,n} \to
G_{T,n}$ is injective. 
\end{proof}

\section{Corridor schemes and the balancing property of
  $V_T$}\label{sec:corr}  

In this section we develop two key tools which will play an
essential role throughout the rest of the paper. These tools are specific
to the groups $V_T$, and they are the primary means by which we establish
the various properties of $V_T$ that are needed. The first of these is
the notion of $\sigma$--corridors in van Kampen diagrams over $V_T$. The
second is the \emph{balancing property} of $V_T$, given in Proposition
\ref{balanced}. 

In order to discuss $\sigma$--corridors we first define \emph{corridor
schemes}. We will make use of several corridor schemes in this paper,
in addition to $\sigma$--corridors. In this section we also discuss
the standard generating set for $V_T$, and various notions of length
associated with this generating set. 

\subsection*{The $2$--complex $X_T$} 
In order to discuss area in $V_T$ we will work with a specific 
$2$--complex $X_T$ with fundamental group $V_T$. 

The group $V$ has a presentation with generators $x_i, y_i, a_i, b_i$ for
$i = 0, 1, 2$ and eighteen relations (see also Figure \ref{fig:S-sigma}): 
\begin{gather}
a_i = \overline{x}_i x_{i+1}, \ \ a_i = x_{i+1} \overline{x}_i, \ \ b_i =
\overline{y}_i y_{i+1}, \ \ b_i = y_{i+1} \overline{y}_i, \notag \\  
x_i y_{i+1} = y_{i+1} x_i, \ \ x_{i+1} y_i = y_i x_{i+1} 
\ \ (i = 0, 1, 2 \mod 3) \label{trianglerelations}
\end{gather}
We define $X$ to be the presentation $2$--complex for this presentation
of $V$. Thus $X$ has one $0$--cell, twelve labeled, oriented $1$--cells,
twelve triangular $2$--cells, and six quadrilateral $2$--cells. 

For each $i$, the subcomplex $Y_i \subset X$ consisting of the two
$1$--cells labeled $a_i$ and $b_i$ is called a \emph{peripheral
  subspace}. It is homeomorphic to $S^1 \vee S^1$ and has fundamental
group $A_i \subset V$. 

The $2$--complex $X_T$ is formed from copies of $X$ in the same way that
$V_T$ is built from copies of $V$. Take a copy of $X$ for each vertex of
$T$, with edge labels re-indexed according to the triple of indices
assigned to that vertex. Whenever $A_i$ and $A_j$ were amalgamated in
$V_T$, glue the peripheral subspaces $Y_i$ and $Y_j$ via a cellular
homeomorphism which induces $\tau$ between $A_i$ and $A_j$. The resulting
space $X_T$, with fundamental group $V_T$, has a natural cell
structure. The $1$--cells are labeled by the generators $x_i$, $y_i$,
$a_i$, and $b_i$, where in some cases, a $1$--cell labeled $a_i$ or $b_i$
is also labeled $a_j$ or $b_j$ in the opposite direction. The $2$--cells
are the same as those of the copies of $X$, with the same triangular and
quadrilateral boundary relations. 

\subsection*{Area} In order to simplify the area calculations to follow,
we declare each triangular cell of $X_T$ to have area $1$, and each
quadrilateral cell to have area $2$. (Think of it as being made of
two triangles.) 

\subsection*{Corridor schemes}
Let $Z$ be any presentation $2$--complex. A \emph{corridor scheme} for
$Z$ is a subset ${\Ss}$ of the set of edges of $Z$ such that every $2$--cell
of $Z$ has either zero or two occurrences of edges of ${\Ss}$ in its
boundary. Given such an ${\Ss}$, one can then define \emph{corridors} in any
van Kampen diagram over $Z$. Call the $2$--cells having two ${\Ss}$--edges in
their boundaries \emph{corridor cells}. Given a van Kampen diagram
$\Delta$, two corridor cells in $\Delta$ are called \emph{neighbors} if
they meet along an ${\Ss}$--edge in their boundaries. A corridor cell has
zero, one, or two neighbors. A \emph{corridor} is a minimal collection
$C$ of corridor cells in $\Delta$ such that if $c \in C$ then all
neighbors of $c$ are also in $C$. Every corridor cell is contained in a
corridor. 

Corridors come in two types: those in which every corridor cell has
neighbors along both of its ${\Ss}$--edges, called \emph{annulus type}, and
the others, called \emph{band type}. Each band type corridor joins an
${\Ss}$--edge on the boundary of $\Delta$ to another ${\Ss}$--edge on the
boundary of $\Delta$, and contains no other ${\Ss}$--edges on the boundary of
$\Delta$. An annulus type corridor may have $2$--cells meeting the
boundary of $\Delta$, but the ${\Ss}$--edges of such $2$--cells will not be
on the boundary. 

If $C$ is a corridor in $\Delta$, then the subset formed by taking the
union of the interiors of its $2$--cells along with the interiors of its
${\Ss}$--edges is an open set, homeomorphic to a tubular neighborhood of a
properly embedded connected $1$--dimensional submanifold of $\Delta$. 
The $1$--manifold meets each corridor cell in an arc joining
the two ${\Ss}$--edges of the cell. Thus an annulus type corridor
contains an embedded open annulus, and a band type corridor contains an
embedded open band $[0,1] \times (0,1)$ meeting the boundary of the
diagram in its boundary $\{0,1\}\times (0,1)$. 

Corridors have two key properties. First, two corridors in a diagram
$\Delta$ will never have $2$--cells or ${\Ss}$--edges in common. In
particular, corridors cannot \emph{cross}. Second, every ${\Ss}$--edge
appearing on the boundary of $\Delta$ is part of a band type corridor,
unless that edge is not in any $2$--cell of $\Delta$. In particular,
given an ${\Ss}$--edge in the boundary of $\Delta$, if there is a $2$--cell
containing that edge, then one can pass from neighbor to neighbor in the
corridor, until one arrives at a second, uniquely determined, ${\Ss}$--edge
on the boundary of $\Delta$. Also, in the boundary, this pair of
${\Ss}$--edges cannot be linked with another such pair, because corridors do
not cross. 

\subsection*{Orientable corridor schemes} 
A corridor scheme ${\Ss}$ is \emph{orientable} if there is a choice of
orientations of the edges of ${\Ss}$ such that in each corridor cell, the two
${\Ss}$--edges have oppposite orientations relative to the boundary of the
cell. It follows that in any corridor, the transverse orientations of the 
${\Ss}$--edges along the corridor all agree. 

\begin{remark}
A corridor scheme defines a
$1$--dimensional $\Z_2$--valued cellular cocycle in $Z$. (If $Z$
happens to be a simplicial complex, then every $1$--cocycle 
is a corridor scheme.) An orientable corridor scheme defines a
$\Z$--valued $1$--cocycle in $Z$. See Gersten \cite{Ge} for a thorough
study of corridors from the cohomological point of view. 
\end{remark}

\subsection*{${\sigma}$--corridors} 
Recall that $D$ was a diagram made of triangles, with dual graph $T$,
which may be regarded as being embedded as a subspace of a triangulated
$(\abs{T}+2)$--gon. Let $\widehat{T}$ be a tree obtained from $T$ by
joining the midpoint of each boundary edge to the vertex in the
neighboring triangle. Then $\widehat{T}$ has $m+1$ leaves, corresponding
to the peripheral subgroups of $V_T$, and interior vertices all of
valence three, which are the original vertices of $T$. Denote the leaves
of $\widehat{T}$ by $v_{\nu_0}, \dotsc, v_{\nu_m}$, so that $v_{\nu_i}$
corresponds to the peripheral subgroup $A_{\nu_i}$. 

Let ${\sigma}$ be a maximal segment in $\widehat{T}$. Note that ${\sigma}$ is
uniquely determined by its endpoints; choosing ${\sigma}$ amounts to
choosing a pair of peripheral subgroups of $V_T$. 
For each such ${\sigma}$ we will define a corridor scheme ${\Ss}_{{\sigma}}$ for
$X_T$. 

In the $(\abs{T}+2)$--gon, ${\sigma}$ starts on a boundary edge, passes
through a sequence of triangles, and ends on a boundary edge. Its
intersection with each of these triangles is an arc joining two sides. It
separates one corner of the triangle from the other two. If $i$ is the
index of this corner, put the edges of $X_T$ labeled by $x_i$ and $y_i$
into ${\Ss}_{{\sigma}}$. Also, if ${\sigma}$ passes through the side of a
triangle associated with the subgroup $A_j$, put the edges labeled by
$a_j$ and $b_j$ into ${\Ss}_{{\sigma}}$. Do this for each triangle that
intersects ${\sigma}$ to obtain ${\Ss}_{{\sigma}}$. The fact that some edges of
$X_T$ have two labels is not a problem; either both labels or neither
label will be chosen for inclusion in ${\Ss}_{{\sigma}}$. See Figure
\ref{fig:sigma}. 
\begin{figure}[ht]
\labellist
\hair 2pt
\large
\pinlabel {$\textcolor[HTML]{37bc62}{\sigma}$} [tr] at 30 7
\pinlabel {$\textcolor[HTML]{37bc62}{\sigma}$} [Bl] at 82 79

\normalsize
\pinlabel* {$\textcolor{blue}{0}$} at 17 20
\pinlabel* {$\textcolor{blue}{1}$} at 48 20
\pinlabel* {$\textcolor{blue}{2}$} at 32.5 47
\pinlabel* {$\textcolor{blue}{3}$} at 79.5 38
\pinlabel* {$\textcolor{blue}{4}$} at 95 65
\pinlabel* {$\textcolor{blue}{5}$} at 63 65

\pinlabel {$A_1$} [Bl] at 40 52
\pinlabel {$A_5$} [tr] at 73 36
\pinlabel {$A_3$} [tl] at 102 63
\pinlabel {$A_4$} [Br] at 71 78
\pinlabel {$A_2$} [Br] at 9.5 27
\pinlabel {$A_0$} [tl] at 41 9
\endlabellist
\includegraphics{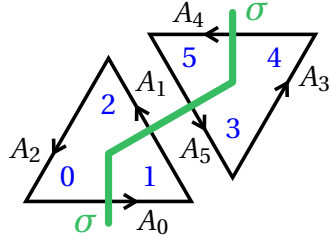}
\caption{This portion of ${\sigma}$ passing through $D$ contributes the
  following edges of $X_T$ to ${\Ss}_{{\sigma}}$: $a_0$, $b_0$, $x_1$, $y_1$,
  $a_1$ ($=a_5$), $b_1$ ($= b_5$), $x_5$, $y_5$, $a_4$,
  $b_4$.}\label{fig:sigma} 
\end{figure}

One verifies easily that ${\Ss}_{{\sigma}}$ is a corridor scheme, by
examining the relations \eqref{trianglerelations} for each triangle of
$D$. See also Figure \ref{fig:S-sigma}. (Because of the two-label
phenomenon, it is important here that ${\sigma}$ is \emph{maximal}.)
Corridors defined by this scheme will be called
\emph{${\sigma}$--corridors}. 

\begin{figure}[ht]
\labellist
\small\hair 2pt
\pinlabel {$x_0$} [b] at 12 100.5
\pinlabel {$x_0$} [b] at 53 100.5
\pinlabel {$x_1$} [b] at 93 100.5
\pinlabel {$x_1$} [b] at 133 100.5
\pinlabel {$x_2$} [b] at 174 100.5
\pinlabel {$x_2$} [b] at 214 100.5

\pinlabel {$y_1$} [r] at 2 90.5
\pinlabel {$y_1$} [l] at 22.5 90.5
\pinlabel {$y_2$} [r] at 42 90.5
\pinlabel {$y_2$} [l] at 63 90.5
\pinlabel {$y_0$} [r] at 82.5 90.5
\pinlabel {$y_0$} [l] at 103.5 90.5
\pinlabel {$y_2$} [r] at 123 90.5
\pinlabel {$y_2$} [l] at 144 90.5
\pinlabel {$y_0$} [r] at 163.5 90.5
\pinlabel {$y_0$} [l] at 184.5 90.5
\pinlabel {$y_1$} [r] at 204.5 90.5
\pinlabel {$y_1$} [l] at 225 90.

\pinlabel {$x_0$} [t] at 12 79.5
\pinlabel {$x_0$} [t] at 53 79.5
\pinlabel {$x_1$} [t] at 93 79.5
\pinlabel {$x_1$} [t] at 133 79.5
\pinlabel {$x_2$} [t] at 174 79.5
\pinlabel {$x_2$} [t] at 214 79.5

\pinlabel {$x_0$} [b] at 50 64.5
\pinlabel {$x_1$} [b] at 122 64.5
\pinlabel {$x_2$} [b] at 194 64.5

\pinlabel* {$a_0$} at 40.5 54
\pinlabel* {$a_1$} at 113 54
\pinlabel* {$a_2$} at 184.5 54

\pinlabel {$x_1$} [r] at 21 54
\pinlabel {$x_1$} [l] at 61 54
\pinlabel {$x_2$} [r] at 93 54
\pinlabel {$x_2$} [l] at 133 54
\pinlabel {$x_0$} [r] at 165 54
\pinlabel {$x_0$} [l] at 205 54

\pinlabel {$x_0$} [t] at 32 43.5
\pinlabel {$x_1$} [t] at 104 43.5
\pinlabel {$x_2$} [t] at 177 43.5

\pinlabel {$y_0$} [b] at 50 28.5
\pinlabel {$y_1$} [b] at 122 28.5
\pinlabel {$y_2$} [b] at 194 28.5

\pinlabel {$y_1$} [r] at 21 18.5
\pinlabel {$y_1$} [l] at 61 18.5
\pinlabel {$y_2$} [r] at 93 18.5
\pinlabel {$y_2$} [l] at 133 18.5
\pinlabel {$y_0$} [r] at 165 18.5
\pinlabel {$y_0$} [l] at 205 18.5

\pinlabel* {$b_0$} at 41 18.5
\pinlabel* {$b_1$} at 113.5 18.5
\pinlabel* {$b_2$} at 185.5 18.5

\pinlabel {$y_0$} [t] at 32 7
\pinlabel {$y_1$} [t] at 104 7
\pinlabel {$y_2$} [t] at 177 7

\endlabellist
\includegraphics[width=4.6in]{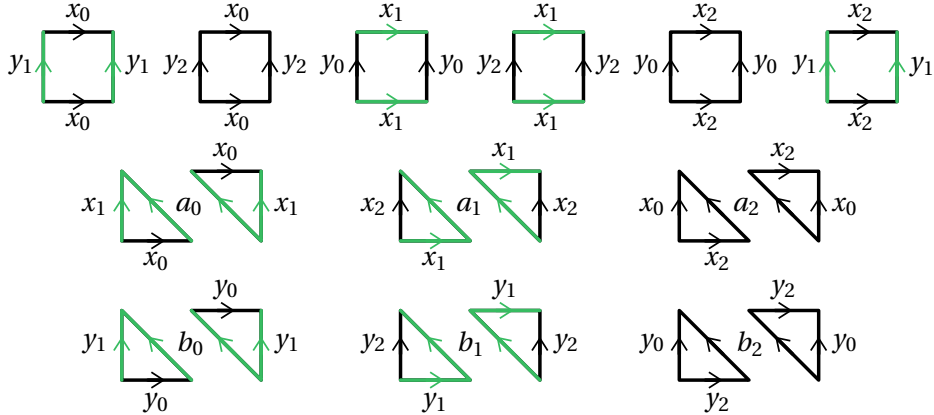}
\caption{Part of the corridor scheme ${\Ss}_{{\sigma}}$, in green, for the
  segment ${\sigma}$ from Figure \ref{fig:sigma}. The $2$--cells
  from the copy of $X$ with triple $(0,1,2)$ are shown. There is an
  analogous collection of corridor cells for every triangle that $\sigma$
  meets.}\label{fig:S-sigma} 
\end{figure}

\begin{remark}\label{corridorremark} 
Looking closely at the corridor scheme ${\Ss}_{{\sigma}}$, two additional
properties become evident. First, the ${\Ss}_{{\sigma}}$--edges appearing in a
single corridor are all labeled $x_i$ or $a_i$ for various indices $i$,
or they are all labeled $y_i$ or $b_i$. (That is, ${\Ss}_{{\sigma}}$ is the
disjoint union of two smaller corridor schemes.) 

Second, the corridor scheme ${\Ss}_{{\sigma}}$ is orientable. Referring to
Figure \ref{fig:S-sigma}, we can give positive orientations (relative to
the labeling) to the edges labeled $x_0$, $y_0$, $x_1$, $y_1$, $a_0$, and
$b_0$, and negative orientations to those labeled $a_1$ and
$b_1$. This set of choices, or its opposite, can be imposed on any copy
of $X$ in $X_T$ that contains edges of ${\Ss}_{{\sigma}}$. If two copies are
adjacent, meaning that they intersect in a subspace $Y_i$, then the
orientations on each copy can be made to agree on $Y_i$, by reversing the
choices on one side if necessary. Now recall that the copies of $X$
containing edges of ${\Ss}_{{\sigma}}$ all lie along ${\sigma}$. Starting with
the copy of $X$ at one end, one may propagate these choices consistently
over all of ${\Ss}_{{\sigma}}$. 

Orientability implies that if an ${\Ss}_{{\sigma}}$--edge label appears more
than once along a corridor, then it is oriented the same way across the
corridor in each occurrence. Furthermore, if a band type corridor
joins two ${\Ss}_{{\sigma}}$--edges in the boundary which carry the same label,
then those labels have opposite orientations relative to the boundary of
the diagram. 
\end{remark}

\subsection*{Standard generators for $V_T$}
Recall that $V_T$ contains many free subgroups $A_i = \langle a_i,
b_i\rangle$ which were the peripheral subgroups of the vertex groups
$V$. Some of these subgroups were assigned to edges of $T$ and
amalgamated together; these subgroups of $V_T$ will be called the
\emph{internal} subgroups, and their generators the \emph{internal
  generators}. Recall that every $A_i$ that is not an internal subgroup
is called a peripheral subgroup of $V_T$. 

The \emph{standard generating set} for $V_T$ will be the union of the
generators of the vertex groups (all the generators $x_i$ and $y_i$) and
the generators $a_i, b_i$ of the peripheral subgroups. The internal
generators are \emph{not} included. 

\begin{definition}\label{lengthdef} If $w$ is a word let $\abs{w}$ denote
  the length of $w$. We define some additional lengths for a word $w$ in
  the standard generators of $V_T$: 
\begin{itemize}
\item $\abs{w}_x$ is the number of occurrences of letters $x_i^{\pm 1}$
  (for any $i$) in $w$ 
\item $\abs{w}_y$ is the number of occurrences of letters $y_i^{\pm 1}$
  (for any $i$) in $w$ 
\item for each $i$, $\abs{w}_i$ is the number of occurrences of letters
  $a_i^{\pm 1}, b_i^{\pm 1}$ in $w$ 
\end{itemize}
Clearly, $\abs{w} = \abs{w}_x + \abs{w}_y + \sum_i
\abs{w}_i$. 

We use similar notation to count occurrences of $x^{\pm 1}$ 
and $y^{\pm 1}$ in words representing elements of $\langle x, y
\rangle$. 
\end{definition}

\begin{definition}\label{weightedlengthdef}
We also define \emph{weighted} word lengths $\aabs{w}$ similar to the
lengths above, where letters are counted with real-valued
weights. 

Recall that $\phi$ has transition matrix $M_{\phi}
= \begin{pmatrix} \, \abs{\phi(x)}_x & \abs{\phi(y)}_x \\ \abs{\phi(x)}_y &
  \abs{\phi(y)}_y \, \end{pmatrix}$ with Perron-Frobenius eigenvalue
$\lambda > 1$. Let $\vec{d}$ be a left eigenvector for $\lambda$ (so
that $\vec{d} M_{\phi} = \lambda \vec{d}$) with positive entries $d_1$
and $d_2$. 

To define the weighted word lengths, we assign the weight $d_1$ to the
letters $x_i$ and $a_i$, and we assign $d_2$ to each $y_i$ and
$b_i$. Thus, 
\begin{itemize}
\item $\aabs{w}_x = d_1 \abs{w}_x$ 
\item $\aabs{w}_y = d_2 \abs{w}_y$ 
\end{itemize}
The weighted length functions are needed for the sake of Lemma
\ref{minstretchmap} below. Up to scaling, this is the only choice of
weights for which the conclusion of the lemma holds. 
\end{definition}

\begin{lemma}\label{minstretchmap}
Suppose $w$ is a word in the free group $\langle x, y \rangle$ and $v$ is
the reduced word representing $\phi(w)$. Let $\aabs{ \, \cdot \, }$ denote
the weighted word length which assigns weight $d_1$ to $x^{\pm
  1}$ and weight $d_2$ to $y^{\pm 1}$, where $\vec{d} M_{\phi} = \lambda
\vec{d}$. Then $\aabs{v} \leq \lambda \aabs{w}$. 
\end{lemma}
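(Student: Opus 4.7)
The plan is essentially a bookkeeping argument based on the Perron--Frobenius eigenvector equation, together with the observation that free reduction cannot increase weighted length (since both weights $d_1, d_2$ are positive).

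First I would introduce the \emph{unreduced} substitution $\widetilde{v}$ obtained from $w$ by replacing each occurrence of $x^{\pm 1}$ by the word $\phi(x)^{\pm 1}$ and each occurrence of $y^{\pm 1}$ by $\phi(y)^{\pm 1}$, without performing any free cancellation. This $\widetilde{v}$ represents the same element of $\langle x,y\rangle$ as $v$, and reducing $\widetilde{v}$ yields $v$.

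Next I would compute the unweighted letter counts of $\widetilde{v}$ directly from the definition of the transition matrix:
\begin{align*}
\abs{\widetilde{v}}_x \ &= \ \abs{\phi(x)}_x \abs{w}_x + \abs{\phi(y)}_x \abs{w}_y, \\
\abs{\widetilde{v}}_y \ &= \ \abs{\phi(x)}_y \abs{w}_x + \abs{\phi(y)}_y \abs{w}_y.
\end{align*}
Multiplying the first equation by $d_1$, the second by $d_2$, and adding, the right hand side regroups as
\[
\abs{w}_x\bigl(d_1\abs{\phi(x)}_x + d_2\abs{\phi(x)}_y\bigr) + \abs{w}_y\bigl(d_1\abs{\phi(y)}_x + d_2\abs{\phi(y)}_y\bigr),
\]
which is precisely $\vec d M_\phi$ paired with the column $(\abs{w}_x,\abs{w}_y)^T$. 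By the left eigenvector equation $\vec d M_\phi = \lambda\vec d$, this equals $\lambda(d_1\abs{w}_x + d_2\abs{w}_y) = \lambda\aabs{w}$. Thus $\aabs{\widetilde{v}} = \lambda\aabs{w}$.

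Finally, the reduced word $v$ is obtained from $\widetilde{v}$ by a sequence of free cancellations; each cancellation removes a letter and its formal inverse, either both of type $x^{\pm 1}$ (weighted contribution $2d_1$) or both of type $y^{\pm 1}$ (weighted contribution $2d_2$). Since $d_1, d_2 > 0$, each cancellation strictly decreases the weighted length, so $\aabs{v} \leq \aabs{\widetilde{v}} = \lambda\aabs{w}$, which is the required bound. There is no real obstacle here: the only conceptual point is recognizing that the Perron--Frobenius weighting is chosen precisely so that substitution scales weighted length by exactly $\lambda$ in the unreduced word, after which monotonicity of weighted length under free reduction finishes the argument.
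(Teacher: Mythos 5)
Your proof is correct and follows essentially the same approach as the paper: compute the weighted length of the unreduced substitution $\phi(w)$ via the left eigenvector equation, then observe that free reduction cannot increase weighted length. You simply make explicit (by naming $\widetilde v$) what the paper leaves implicit in its use of $\abs{\phi(w)}_x$ and $\abs{\phi(w)}_y$ as unreduced letter counts.
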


\begin{proof}
This is a simple calculation:
\begin{align*}
\aabs{\phi(w)} \ &= \ d_1 \abs{\phi(w)}_x + d_2 \abs{\phi(w)}_y \\
&= \ d_1 \abs{\phi(x)}_x \abs{w}_x + d_1 \abs{\phi(y)}_x
  \abs{w}_y + d_2 
\abs{\phi(x)}_y \abs{w}_x + d_2 \abs{\phi(y)}_y \abs{w}_y \\
&= \ \lambda d_1 \abs{w}_x + \lambda d_2 \abs{w}_y \\
&= \ \lambda \aabs{w}.
\end{align*}
Now, $\aabs{v} \leq \aabs{\phi(w)} = \lambda\aabs{w}$. 
\end{proof}

\subsection*{The balancing property} 
A fundamental property of $V_T$ and its standard generating set, the
\emph{balancing property}, is given in the next proposition. 

\begin{proposition} 
\label{balanced}
Suppose $w$ and $z(a_{\nu_0}, b_{\nu_0})$ represent the same element of
$A_{\nu_0} \subset V_T$, where $w$ is a word in the standard generators
of $V_T$ and $z(a_{\nu_0}, b_{\nu_0})$ is reduced. Then for each $i = 1,
\dotsc, m$ there is an inequality \[\abs{z} \ \leq \ \abs{w}_{\nu_i} +
\abs{w}_{\nu_0} + \abs{w}_x + \abs{w}_y.\]
\end{proposition}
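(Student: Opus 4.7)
The plan is a van Kampen diagram argument with the $\sigma$--corridor scheme $\Ss_\sigma$, where $\sigma$ is the unique maximal segment in $\widehat{T}$ joining the leaves $v_{\nu_0}$ and $v_{\nu_i}$. I would fix a van Kampen diagram $\Delta$ over $X_T$ for the word $wz^{-1}$ and count $\Ss_\sigma$--edges on its boundary. Every letter of the $z^{-1}$--portion of $\partial\Delta$ lies in $\Ss_\sigma$ (since $z$ uses only $a_{\nu_0}^{\pm 1}$ and $b_{\nu_0}^{\pm 1}$, both of which are $\Ss_\sigma$--labels at the $v_{\nu_0}$--end of $\sigma$), so each is the endpoint of an $\Ss_\sigma$--corridor. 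On the other hand, the only $\Ss_\sigma$--labels that can occur on the $w$--portion of $\partial\Delta$ are $a_{\nu_0}^{\pm 1}, b_{\nu_0}^{\pm 1}, a_{\nu_i}^{\pm 1}, b_{\nu_i}^{\pm 1}$ (coming from the two endpoints of $\sigma$) together with $x_j^{\pm 1}, y_j^{\pm 1}$ (coming from the separated corners along $\sigma$). Hence the number of $\Ss_\sigma$--edges on the $w$--portion is bounded by $\abs{w}_{\nu_0} + \abs{w}_{\nu_i} + \abs{w}_x + \abs{w}_y$.

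Since corridors are disjointly embedded, they determine a non-crossing partial pairing of the $\Ss_\sigma$--edges on $\partial\Delta$. The desired inequality will follow at once if each letter of $z^{-1}$ is paired with an $\Ss_\sigma$--edge on the $w$--portion; so the whole point is to rule out $\Ss_\sigma$--corridors with both endpoints on $z^{-1}$. I would take $\Delta$ of minimal area, decompose $\Ss_\sigma = \Ss_\sigma^x \sqcup \Ss_\sigma^y$ according to Remark \ref{corridorremark}, and suppose for contradiction that some $\Ss_\sigma^x$--corridor $C$ has both endpoints on $z^{-1}$ (the $\Ss_\sigma^y$ case is symmetric), chosen innermost. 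By orientability (Remark \ref{corridorremark}), the two endpoints carry opposite orientations along $\partial\Delta$, so they correspond to an $a_{\nu_0}$ and an $a_{\nu_0}^{-1}$ in $z$ that bracket a subword $z_0$. Cutting along $C$ extracts a subdiagram $\Delta_0$ of strictly smaller area with boundary word $z_0^{-1}\cdot u$, where the long side $u$ of $C$ contains no $\Ss_\sigma^x$--edges; in particular, $z_0 = u$ in $V_T$, and $u$ has no $a_{\nu_0}^{\pm 1}$ or $a_{\nu_i}^{\pm 1}$ letters.

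The main obstacle is converting the equation $z_0 = u$ into a contradiction with the reducedness of $z$. The plan is to apply the proposition inductively to $\Delta_0$, with $u$ playing the role of $w$ and the element $z_0 \in A_{\nu_0}$ playing the role of $z$, and then to exploit the absence of $a_{\nu_0}^{\pm 1}$ letters from $u$ to produce a representative of $z_0$ with strictly fewer $a_{\nu_0}^{\pm 1}$ letters than $z_0$ itself. Substituting this shorter word back in place of $z_0$ inside $z$ would yield a strictly shorter word in $a_{\nu_0}^{\pm 1}, b_{\nu_0}^{\pm 1}$ representing the same element of $A_{\nu_0}$, contradicting the reducedness of $z$. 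Running the symmetric argument with $\Ss_\sigma^y$--corridors rules out internal $b_{\nu_0}$--type pairings. Getting this induction to close tightly --- in particular, establishing the \emph{strict} length decrease --- is expected to be the delicate step, requiring a careful joint analysis of both subschemes $\Ss_\sigma^x$ and $\Ss_\sigma^y$ together with the orientability of $\Ss_\sigma$. Once the pairing of $z$--letters into $\Ss_\sigma$--edges on the $w$--portion is known to be injective, the counting above yields $\abs{z} \leq \abs{w}_{\nu_0} + \abs{w}_{\nu_i} + \abs{w}_x + \abs{w}_y$, as required.
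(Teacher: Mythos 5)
Your setup is on the right track and matches the paper's: choose the $\sigma$--corridor scheme for the segment from $v_{\nu_0}$ to $v_{\nu_i}$, note that every letter of $z$ is an $\Ss_\sigma$--edge, bound the number of $\Ss_\sigma$--edges on the $w$--side by $\abs{w}_{\nu_0}+\abs{w}_{\nu_i}+\abs{w}_x+\abs{w}_y$, and reduce the problem to showing no $\sigma$--corridor has both endpoints on $z$.

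The gap is in how you handle that last reduction. You take a corridor that is innermost among $\Ss_\sigma^x$--corridors with both ends on $z$, which allows $\Ss_\sigma^y$--edges of $z$ (that is, $b_{\nu_0}^{\pm 1}$ letters) to separate its two endpoints. That is why you end up with a nonempty subword $z_0$, and why you are driven into the cut-and-induct scheme that, as you concede, does not obviously close. The fix is simply to take $C$ innermost among \emph{all} $\Ss_\sigma$--corridors with both ends on $z$, without first splitting into the $x$ and $y$ subschemes. Since $C$ is a band-type corridor, its long side carries no $\Ss_\sigma$--edges at all; so any letter of $z$ strictly between the endpoints of $C$ would be an $\Ss_\sigma$--edge whose corridor is trapped inside $C$, contradicting innermostness. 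But \emph{every} letter of $z$ is an $\Ss_\sigma$--edge, so the endpoints of $C$ are adjacent letters of $z$. Now Remark \ref{corridorremark} finishes it: the two endpoints lie in a single corridor and hence are of the same type (both $a_{\nu_0}^{\pm 1}$ or both $b_{\nu_0}^{\pm 1}$), and orientability forces them to have opposite signs relative to $\partial\Delta$. Adjacent cancelling letters contradict that $z$ is reduced, and the proof is complete with no induction, no minimal-area assumption, and no recursion through subdiagrams.
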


\begin{remarks}\label{balancedremarks}
(1) The proposition says that an element of a peripheral
subgroup cannot be expressed efficiently using generators from
\emph{other} peripheral subgroups. For instance, if $w$ contains only
generators from $A_{\nu_1}, \dotsc, A_{\nu_m}$, then $\abs{w}_{\nu_0} =
\abs{w}_x = \abs{w}_y = 0$ and  $\abs{w}_{\nu_i}
\geq \abs{z}$ for every $i$, whence $\abs{w} \geq m \abs{z}$. An
example of such a word $w$ is given in \eqref{standardword} below, where
$z$ is the initial subword $w(a_{\nu_0},b_{\nu_0})$ and $w$ is the
inverse of the remaining expression (see also Figure
\ref{fig:canonical}). 

(2) There is nothing special about $\nu_0$. By re-indexing the
peripheral subgroups, there is a corresponding statement that holds for
each peripheral subgroup of $V_T$. 
\end{remarks}

\begin{proof}[Proof of Proposition \ref{balanced}] 
Let ${\sigma}$ be the maximal segment in $\widehat{T}$ with endpoints
$v_{\nu_0}$ and $v_{\nu_i}$. The corridor scheme ${\Ss}_{{\sigma}}$ contains
exactly four edges whose labels are peripheral generators of $V_T$; these
generators are $a_{\nu_0}$, $b_{\nu_0}$, $a_{\nu_i}$, and
$b_{\nu_i}$. Every other standard generator occurring as the label of an
edge in ${\Ss}_{{\sigma}}$ is of the form $x_j$ or $y_j$. 

We may assume without loss of generality that $w$ is reduced. We may
further assume that the word $zw^{-1}$ is cyclically reduced, since
cancellation of letters between $z$ and $w^{-1}$ does not change the
status of the inequality. 

Let $\Delta$ be a reduced van Kampen diagram over $X_T$ with boundary
labeled by $z w^{-1}$. We may assume that $\Delta$ is topologically a
disk. Every edge on $z$ is an
${\Ss}_{{\sigma}}$--edge, and is joined by a ${\sigma}$--corridor to another
${\Ss}_{{\sigma}}$--edge on the boundary of $\Delta$. If this latter edge is
not in $z$ then it contributes $1$ to the right hand side of the
inequality, since it is labeled by a standard generator. 

We claim that no ${\sigma}$--corridor can join two edges of $z$. Then,
since ${\sigma}$--corridors never have ${\Ss}_{{\sigma}}$--edges in common,
there will be at least $\abs{z}$ ${\Ss}_{{\sigma}}$--edges along $w^{-1}$,
which establishes the result. 

If a ${\sigma}$--corridor joins two edges of $z$, then since corridors do
not cross, there is an innermost such corridor. The ${\Ss}_{{\sigma}}$--edges
that it joins must be adjacent edges of $z$, by the innermost
property. Suppose (without loss of generality) the label on one of the
edges is $a_{\nu_0}$. By Remark \ref{corridorremark} the label on the
other edge must then be $a_{\nu_0}^{-1}$, but this contradicts the
assumption that $z$ is reduced. 
\end{proof}

\begin{remark}\label{weightedremark}
Proposition \ref{balanced} remains true if \emph{weighted} word lengths
are used throughout:
\[\aabs{z} \ \leq \ \aabs{w}_{\nu_i} + \aabs{w}_{\nu_0} + \aabs{w}_x +
\aabs{w}_y\]
for each $i = 1, \dotsc, m$. Recall that the proof entailed finding
corridors joining letters of $z$ to letters of $w$. The letters occurring
at the ends of such a corridor will have the same weights, by Remark
\ref{corridorremark}. Therefore, each contribution to the left hand side
of the inequality has a matching contribution on the right hand side. 
\end{remark}

\section{Canonical diagrams} \label{sec:canonical} 

In this section we construct a large family of van Kampen diagrams over
$X_T$ called canonical diagrams. These will be used in the construction
of snowflake diagrams in the next section. We also develop properties of 
$\sigma$--corridors in order to show that canonical diagrams and
snowflake diagrams minimize area relative to their boundaries. 

\subsection*{Canonical diagrams over $X_T$}
Let $w(x,y)$ be a palindromic word in the free group. In $V_T$, for each
$i$, one has the relation $w(a_i, b_i) = w(\overline{x}_i,
\overline{y}_i) w(x_{i+1},y_{i+1})$ where ``$i+1$'' is interpreted
appropriately. Since $w$ is palindromic, this relation is identical to
the relation $w(a_i, b_i) = w(x_i, y_i)^{-1} w(x_{i+1},y_{i+1})$. It
bounds a triangular van Kampen diagram over $X_T$ of area $\abs{w}^2$;
see Figure \ref{fig:triangle}. 
\begin{figure}[ht]
\labellist
\hair 2pt
\normalsize
\pinlabel {$w(x_0, y_0)$} [t] at 38.5 2.5
\pinlabel {$w(x_{1}, y_{1})$} [r] at 2 39
\pinlabel {$w(a_0, b_0)$} [Bl] at 43 45

\pinlabel {$w(a_0, b_0)$} [t] at 136 6
\pinlabel {$w(a_1, b_1)$} [Bl] at 158 41
\pinlabel {$w(a_2, b_2)$} [Br] at 120 45

\endlabellist 
\includegraphics[width=1.5in]{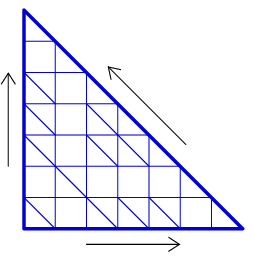} \hspace{.5in}
\includegraphics[width=1.5in]{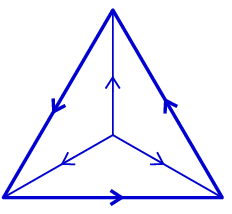}
\caption{Assembling diagrams over $X_T$}\label{fig:triangle}
\end{figure}
Assembling three 
such diagrams in cyclic fashion, one obtains, for each vertex group $V$
with index triple $(i,j,k)$, a diagram of area $3\abs{w}^2$ with boundary
word $w(a_i, b_i) w(a_j,b_j) w(a_k, b_k)$. Finally, taking diagrams of
the latter kind, one for each vertex of $T$, and assembling them
according to $T$ (just like the triangles in $D$) one
obtains a van Kampen diagram over $X_T$ of area $3\abs{T} \abs{w}^2$ with
boundary word 
\begin{equation}\label{standardword}
w(a_{\nu_0},b_{\nu_0}) w(a_{\nu_1},b_{\nu_1}) \dotsm
w(a_{\nu_m},b_{\nu_m}). 
\end{equation}
See Figure \ref{fig:canonical}. 
\begin{figure}[ht] 
\labellist
\hair 2pt
\normalsize
\pinlabel {$w(a_0, b_0)$} [t] at 18 3.5
\pinlabel {$w(a_9,b_9)$} [t] at 49 3.5
\pinlabel {$w(a_{13},b_{13})$} [tl] at 74 21
\pinlabel {$w(a_{15},b_{15})$} [Bl] at 74.5 48
\pinlabel {$w_{16}$} [Br] at 60.5 57
\pinlabel {$w_6$} [Bl] at 35 57
\pinlabel {$w(a_7,b_7)$} [Br] at 23 49
\pinlabel {$w(a_2,b_2)$} [Br] at 7 23
\endlabellist 
\includegraphics[width=1.8in]{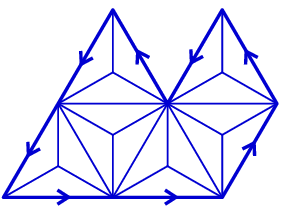}
\caption{The canonical diagram of $w$ with boundary word
  \eqref{standardword}, where $w_i = w(a_i, b_i)$.}\label{fig:canonical} 
\end{figure}
In assembling this diagram, we are using the fact that
$w(a_i, b_i) = w(a_j,b_j)^{-1}$ whenever $A_i$ and $A_j$ were
amalgamated, which also relies on the palindromic property of $w$. 

This van Kampen diagram will be called the \emph{canonical 
diagram of $w$}, and it is defined for every palindromic word. If $w$ is
reduced then the canonical diagram is also reduced. 

Our remaining objective in this section is to show that canonical
diagrams (and their ``doubled'' variants) minimize area. To this end, we
need to establish some additional properties of ${\sigma}$--corridors. 

\begin{lemma}\label{crossingsquares}
Let $\Delta$ be a van Kampen diagram over $X_T$ and suppose that $C$ is a
${\sigma}$--corridor and $C'$ is a ${\sigma}'$--corridor in $\Delta$. If\/ $C$
and $C'$ have intersection of positive area, then $C \cap C'$ contains
one of the following: 
\begin{enumerate}
\item \label{cs1} a quadrilateral relator 
\item \label{cs2} two neighboring triangular relators with a
  common ${\Ss}_{{\sigma}}\cap {\Ss}_{{\sigma}'}$--edge labeled $a_i$ or $b_i$ 
\item \label{cs3} a triangular relator with a side labeled by
  $a_i$ or $b_i$, which is a ${\Ss}_{{\sigma}} \cap {\Ss}_{{\sigma}'}$--edge in the
  boundary of\/ $\Delta$. 
\end{enumerate}
\end{lemma}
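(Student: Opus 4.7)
The plan is to take any single 2-cell lying in both corridors and determine what its shape and position in $\Delta$ force.

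Since $C\cap C'$ has positive area, there must be a 2-cell $F$ of $\Delta$ lying in both $C$ and $C'$. If $F$ is a quadrilateral, then $F$ itself witnesses conclusion~\eqref{cs1} and we are done. Otherwise $F$ is a triangular relator with three boundary edges, exactly two of which lie in $\Ss_{\sigma}$ and exactly two of which lie in $\Ss_{\sigma'}$. A pigeonhole count ($2+2>3$) then produces at least one edge $e$ of $F$ that belongs to $\Ss_{\sigma}\cap\Ss_{\sigma'}$.

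The next step is to pin down the label of $e$. The triangular relators of $X_T$ have edge-label sets of the form $\{a_i,x_i,x_{i+1}\}$ or $\{b_i,y_i,y_{i+1}\}$. Because a single segment $\sigma$ separates at most one corner in any triangle of $D$, at most one of $x_i, x_{i+1}$ (respectively $y_i, y_{i+1}$) can lie in $\Ss_{\sigma}$; consequently, for $F$ to have two $\Ss_{\sigma}$-edges, the peripheral edge $a_i$ (or $b_i$) must be one of them. The same argument with $\sigma'$ shows that this peripheral edge also lies in $\Ss_{\sigma'}$, so I may take $e$ to be the $a_i$- or $b_i$-edge of $F$.

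Finally, I would split on whether $e$ lies on $\partial\Delta$. If it does, conclusion~\eqref{cs3} is immediate. Otherwise $e$ is interior and shared with a unique second 2-cell $F'$ of $\Delta$, which must again be a triangle because $a_i$- and $b_i$-edges appear only in triangular relators of $X_T$. Since $F'$ contains the $\Ss_{\sigma}$-edge $e$ in its boundary, the defining property of a corridor scheme forces $F'$ to have exactly two $\Ss_{\sigma}$-edges, making $F'$ a $\sigma$-corridor cell adjacent to $F$ across $e$; hence $F'\in C$. The same reasoning for $\sigma'$ gives $F'\in C'$, which delivers conclusion~\eqref{cs2}.

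The main technical point is the second step: identifying the common edge $e$ as an $a_i$- or $b_i$-edge relies crucially on the observation that each maximal segment $\sigma\subset\widehat{T}$ singles out at most one separated corner per triangle of $D$, and therefore cannot contribute both non-peripheral edges of a triangular relator to $\Ss_{\sigma}$. Once this is established, the rest of the argument is a local analysis near $e$ using only the definition of a corridor as a neighbor-closed collection of corridor cells.
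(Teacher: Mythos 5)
Your proof is correct and takes essentially the same route as the paper: identify the $a_i$/$b_i$ edge of a shared triangular cell as an $\Ss_\sigma \cap \Ss_{\sigma'}$--edge, then split on whether that edge is boundary or interior. The only difference is that you derive the key fact (that a triangular corridor cell for $\Ss_\sigma$ must have its $a_i$/$b_i$ edge in $\Ss_\sigma$) directly from the definition of $\Ss_\sigma$, whereas the paper cites this by inspection of Figure~\ref{fig:S-sigma}.
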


We will refer to the quadrilateral relator in \eqref{cs1} and the union of
the two neighboring triangular relators in \eqref{cs2} as \emph{crossing
  squares} for $C \cap C'$. The triangular relators in \eqref{cs3} will
be called \emph{crossing triangles}. Note that crossing squares have area
$2$. 

We shall see that canonical diagrams are completely filled by crossing
squares and triangles for various pairs of corridors, and that these
crossing regions must be present in any diagram with the same boundary. 

\begin{proof}
If case \eqref{cs1} does not occur, then $C \cap C'$ contains a
triangular relator. Note that ${\Ss}_{{\sigma}}$ has the property that a
corridor cell is triangular if and only if one of its boundary
${\Ss}_{{\sigma}}$--edges is labeled $a_i$ or $b_i$; see Figure
\ref{fig:S-sigma}. The same is true of ${\Ss}_{{\sigma}'}$. Thus, the side of
the triangular relator labeled $a_i$ or $b_i$ is an ${\Ss}_{{\sigma}}\cap
{\Ss}_{{\sigma}'}$--edge. If this edge is in the boundary of $\Delta$ then case
\eqref{cs3} occurs. Otherwise, the neighboring $2$--cell across that edge
is a corridor cell for both ${\Ss}_{{\sigma}}$ and ${\Ss}_{{\sigma}'}$ and case
\eqref{cs2} occurs. 
\end{proof}

\begin{lemma}\label{emptyint}
If ${\sigma}$ and ${\sigma}'$ are maximal segments in $\widehat{T}$ with no
edges in common, then ${\Ss}_{{\sigma}} \cap {\Ss}_{{\sigma}'}$ is empty and no
$2$--cell is a corridor cell for both ${\Ss}_{{\sigma}}$ and ${\Ss}_{{\sigma}'}$. 
\end{lemma}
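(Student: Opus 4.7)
The plan is to unpack what it means for $\sigma$ to contribute an edge to $\Ss_\sigma$ and to trace each such contribution back to an edge or vertex of $\widehat{T}$. Recall the two sources of edges in $\Ss_\sigma$: (i) for each triangle $t$ of $D$ that $\sigma$ crosses, the edges of $X_T$ labeled $x_i, y_i$, where $i$ is the corner of $t$ separated by $\sigma$; and (ii) for each side $A_j$ of $D$ that $\sigma$ crosses, the edges of $X_T$ labeled $a_j, b_j$. Because distinct vertex groups use non-overlapping index triples, each $x_i, y_i$ label identifies a unique triangle of $D$, and each $a_j, b_j$ label identifies a unique edge of $\widehat{T}$ (even though internal sides of $D$ carry two such labels, these correspond to the same edge of $\widehat{T}$). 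Finally, since $\sigma$ is a maximal segment in the tree $\widehat{T}$, its endpoints are leaves and at every interior vertex on $\sigma$ it uses exactly two incident edges of $\widehat{T}$, corresponding to two distinct sides of the associated triangle.

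For the first conclusion I would suppose $e \in \Ss_\sigma \cap \Ss_{\sigma'}$ and derive a contradiction. If $e$ has a peripheral label $a_j, b_j$, then both $\sigma$ and $\sigma'$ cross the side $A_j$ of $D$, hence share the corresponding edge of $\widehat{T}$, contradicting the hypothesis. If $e$ has an internal label $x_i, y_i$, then both segments pass through the unique triangle $t$ containing corner $i$ and both separate that corner. Each uses two of the three sides of $t$, so by pigeonhole their side-pairs overlap, again giving a shared edge of $\widehat{T}$.

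The second conclusion does not follow formally from the first, since a quadrilateral relator has four boundary edge-occurrences which could in principle split as two disjoint edges of $\Ss_\sigma$ and $\Ss_{\sigma'}$. I would apply the same pigeonhole at the triangle level. Each 2-cell of $X_T$ lies in a unique copy of $X$, and hence is associated with a unique triangle $t$ of $D$. Being a corridor cell of $\Ss_\sigma$ forces some $\Ss_\sigma$-edge into the 2-cell's boundary; regardless of whether that edge carries an internal label $x_i, y_i$ (placing $\sigma$ directly across $t$) or a peripheral label $a_j, b_j$ (placing $\sigma$ across a side of $t$, hence also across $t$), we conclude that $\sigma$ passes through $t$. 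Likewise $\sigma'$ passes through $t$, and the pigeonhole argument on the three sides of $t$ once again produces a shared edge of $\widehat{T}$, a contradiction.

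The main obstacle, such as it is, lies in recognizing that the two assertions require separate (though parallel) treatments. Once each is reduced to the statement that both $\sigma$ and $\sigma'$ pass through the same triangle $t$, the contradiction is immediate: a triangle has only three sides, so two pairs of two sides cannot be pairwise disjoint.
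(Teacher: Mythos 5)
Your proof is correct and rests on the same core combinatorial fact as the paper's: since $\widehat{T}$ has valence at most $3$ (equivalently, each triangle of $D$ has three sides), two maximal segments passing through the same triangle must share a side of it, and hence an edge of $\widehat{T}$. The paper packages this more compactly by first observing that the disjointness of the edge sets forces $\sigma$ and $\sigma'$ to be disjoint as subsets of $\widehat{T}$, so they never meet the same triangle, and then deriving both conclusions at once; your version re-runs the pigeonhole locally in each part, which is slightly longer but logically equivalent.
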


\begin{proof}
Because $\widehat{T}$ has valence at most $3$, ${\sigma}$ and ${\sigma}'$
must actually be disjoint. Thus, they never pass through the same
triangle of $D$, which shows that ${\Ss}_{{\sigma}} \cap {\Ss}_{{\sigma}'}$ is
empty. It follows immediately that no triangular $2$--cell can be a
corridor cell for both ${\Ss}_{{\sigma}}$ and ${\Ss}_{{\sigma}'}$. The same is true
for quadrilateral $2$--cells because each such $2$--cell has all of its
side labels coming from a single triangle in $D$. 
\end{proof}

\begin{definition}
For each edge $e$ in $\widehat{T}$ choose maximal segments ${\sigma}_e$,
${\sigma}_e'$ in $\widehat{T}$ whose intersection is exactly $e$. If $e$ is
an interior edge, we also require that the endpoints of ${\sigma}_e$ and
${\sigma}_e'$ are linked in the boundary of the $(\abs{T}+2)$--gon; see
Figure \ref{fig:delta-e}. 

If $C$ and $C'$ are ${\sigma}_e$-- and ${\sigma}_e'$--corridors
respectively, a crossing square or crossing triangle for $C \cap C'$ will
be called an \emph{$e$--crossing square} or an \emph{$e$--crossing
  triangle} (or \emph{$e$--crossing region} in either case). Figure
\ref{fig:delta-e} shows the location of $e$--crossing regions in a
canonical diagram $\Delta$. 
\end{definition}

\begin{figure}[ht]
\labellist
\hair 2pt
\large
\pinlabel* {$e$} at 41.5 14
\pinlabel {$\textcolor[HTML]{8a2be2}{\sigma_{\! e}}$} [tl] at 49 5.5
\pinlabel {$\textcolor[HTML]{8a2be2}{\sigma_{\! e}}$} [Br] at 25 52
\pinlabel {$\textcolor{red}{\sigma'_{\! e}}$} [Br] at 6 20
\pinlabel {$\textcolor{red}{\sigma'_{\! e}}$} [Bl] at 72 51
\endlabellist 
\includegraphics[width=4.5in]{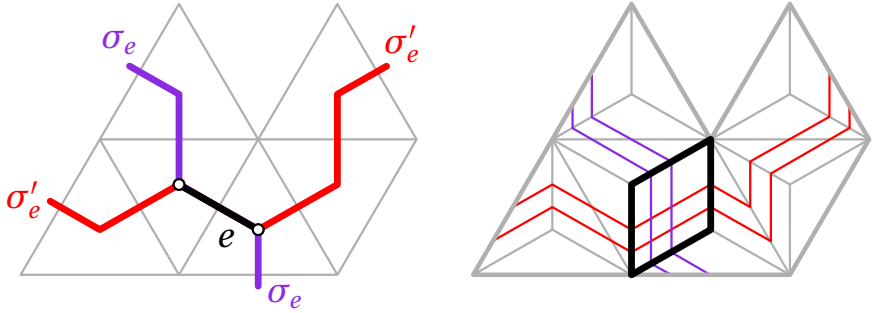}
\caption{An edge $e$ in $\widehat{T}$ and the segments ${\sigma}_e$,
  ${\sigma}_e'$. The $e$--crossing squares in $\Delta$ fill a quadrilateral
  region of area $2\abs{w}^2$ as shown. If $e$ were a peripheral edge,
  the $e$--crossing squares and triangles would fill a triangular region
  next to the boundary of $\Delta$, of area $\abs{w}^2$.}\label{fig:delta-e} 
\end{figure}

\begin{lemma}\label{e-f-regions}
If\/ $e$ and $f$ are distinct edges of $\widehat{T}$ then $e$--crossing
regions and $f$--crossing regions have no $2$--cells in common. 
\end{lemma}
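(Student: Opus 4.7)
I would argue by contradiction. Suppose a $2$-cell $c$ lies in both an $e$-crossing region and an $f$-crossing region; then $c$ is simultaneously a corridor cell for each of $\mathscr{S}_{\sigma_e}, \mathscr{S}_{\sigma_e'}, \mathscr{S}_{\sigma_f}, \mathscr{S}_{\sigma_f'}$. The $2$-cell $c$ lies in the copy of $X$ at some vertex $v$ of $T$, i.e., in a triangle $t_v$ of $D$ with corner triple $\{i_0, i_1, i_2\}$. Since the edges of $\mathscr{S}_\sigma$ within this copy of $X$ are produced only when $\sigma$ passes through $t_v$, all four segments must meet $t_v$.

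The next step is to locate $e$ and $f$ at $v$. Both $\sigma_e$ and $\sigma_e'$ pass through the interior vertex $v$, which has valence $3$ in $\widehat{T}$; since $\sigma_e \cap \sigma_e' = e$, they cannot share two edges at $v$, so their unique common edge at $v$ is exactly $e$, and they branch off at $v$ in the two remaining directions. Hence $e$ is incident to $v$, and symmetrically so is $f$. Thus $e$ and $f$ correspond to two of the three sides of $t_v$, and the third edge of $\widehat{T}$ at $v$ corresponds to the third side.

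The heart of the argument is a combinatorial classification of $2$-cells in $X$ by \emph{corner pairs}: each triangular relator $a_j = \overline{x}_j x_{j+1}$ or $b_j = \overline{y}_j y_{j+1}$ (with its orientation variant) receives pair $\{j, j+1\}$, and each quadrilateral relator $x_a y_b = y_b x_a$ receives pair $\{a, b\}$. Direct inspection of $\mathscr{S}_\sigma$—which for $\sigma$ separating a corner $i$ of $t_v$ consists of $x_i, y_i$ together with the peripheral labels of the two sides of $t_v$ incident to $i$—shows that a $2$-cell with corner pair $P$ is a corridor cell for $\mathscr{S}_\sigma$ if and only if $\sigma$ separates some corner in $P$. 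I expect this to be routine but the main bookkeeping step of the proof.

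To conclude: since $\sigma_e, \sigma_e'$ branch off at $v$ in the two directions other than $e$, in $t_v$ they exit through the two sides other than the $e$-side, hence separate the two corners of $t_v$ incident to the $e$-side. Likewise $\sigma_f, \sigma_f'$ separate the two corners incident to the $f$-side. The two pairs of corners overlap in exactly one element (the common corner of the $e$-side and $f$-side) and together exhaust $\{i_0, i_1, i_2\}$. For $c$ to be a corridor cell for all four schemes, its corner pair $P$ must contain each of these separated corners; but $|P| = 2 < 3$, a contradiction. The remaining case, where $e$ and $f$ share no vertex in $\widehat{T}$, is immediate: then no triangle of $D$ is adjacent to both edges in $\widehat{T}$, so the two families of crossing regions lie in disjoint copies of $X$.
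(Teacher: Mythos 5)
Your proof is correct and follows essentially the same approach as the paper's: the central combinatorial observation in both is that every $2$-cell in the copy of $X$ at $v$ is a corridor cell for exactly two of the three possible ways a maximal segment can pass through $t_v$, while $\sigma_e$, $\sigma_e'$, $\sigma_f$, $\sigma_f'$ jointly realize all three. Two minor stylistic differences are worth noting: your corner-pair bookkeeping makes explicit the paper's ``one can verify easily'' step (and it does check out upon inspection of the relators in \eqref{trianglerelations}), and by deriving from the hypothetical common $2$-cell that $e$ and $f$ must be incident to a common vertex, you absorb the paper's first case---the one where $e$ and $f$ are separated by a third edge, handled there via Lemma~\ref{emptyint}---which also makes your closing sentence about the no-common-vertex case redundant rather than a separate case to be disposed of.
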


\begin{proof}
It suffices to show that no $2$--cell is a corridor cell simultaneously
for all four corridor schemes ${\Ss}_{{\sigma}_e}$, ${\Ss}_{{\sigma}_e'}$,
${\Ss}_{{\sigma}_f}$, ${\Ss}_{{\sigma}_f'}$. 

If $e$ and $f$ are separated by a third edge $g$, then at least one of
${\sigma}_e$, ${\sigma}_e'$ and one of ${\sigma}_f$, ${\sigma}_f'$ does not
contain $g$. Hence, these two segments are disjoint and Lemma
\ref{emptyint} applies. 

Otherwise, $e$ and $f$ have a common vertex $v$. Consider the triangle in
$D$ centered at $v$. There are three ways that a segment ${\sigma}$ can
pass through the triangle, and the four segments must use 
all three of these. Of the eighteen $2$--cells associated with this
triangle, one can verify easily that each $2$--cell is a corridor cell
for exactly two of the three possible schemes. It follows that no
$2$--cell associated with this triangle can be a corridor cell for all
four corridor schemes. No other triangle in $D$ can meet all four
segments, so the same is true for the other $2$--cells of $X_T$. 
\end{proof}

\begin{definition}
A van Kampen diagram over $X$ is called \emph{least-area} if it has the
smallest area of all van Kampen diagrams over $X$ having the same
boundary word. 
\end{definition}

\begin{proposition}\label{leastarea}
Let $w(x,y)$ be a reduced palindromic word. The canonical diagram of $w$
is least-area. 
\end{proposition}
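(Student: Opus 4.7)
The plan is to show that any van Kampen diagram $\Delta'$ over $X_T$ with the same boundary word as the canonical diagram of $w$ satisfies $\area(\Delta') \geq 3\abs{T}\abs{w}^2$. By Lemma \ref{e-f-regions}, for distinct edges $e,f$ of $\widehat{T}$ the $e$-crossing and $f$-crossing regions in $\Delta'$ share no $2$-cells, so
\[
\area(\Delta') \ \geq \ \sum_{e} \area\bigl(e\text{-crossing regions in }\Delta'\bigr),
\]
the sum ranging over all edges of $\widehat{T}$. It therefore suffices to show that each interior edge contributes at least $2\abs{w}^2$ and each peripheral edge at least $\abs{w}^2$; since $\widehat{T}$ has $\abs{T}-1$ interior edges and $\abs{T}+2$ peripheral edges, the total is at least $3\abs{T}\abs{w}^2$, matching the canonical diagram.

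For a maximal segment $\sigma$ in $\widehat{T}$ with endpoints $v_{\nu_i}, v_{\nu_j}$, the only $\Ss_\sigma$-edges on $\partial\Delta'$ lie in the two blocks $w(a_{\nu_i},b_{\nu_i})$ and $w(a_{\nu_j},b_{\nu_j})$, with $\abs{w}$ in each. Repeating the innermost-corridor argument from the proof of Proposition \ref{balanced}, no $\sigma$-corridor can join two $\Ss_\sigma$-edges within the same block: by orientability (Remark \ref{corridorremark}), an innermost such corridor would force adjacent letters of that block to be inverses, contradicting reducedness of $w$. Consequently the $\sigma$-corridors realize a non-crossing perfect matching between the two blocks; this matching is uniquely determined by the cyclic order of the blocks along $\partial\Delta'$ and pairs position $k$ in one block with position $\abs{w}+1-k$ in the other.

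I would then apply this to $\sigma_e$ and $\sigma_e'$ for each edge $e$ of $\widehat{T}$. When $e$ is interior, the endpoints of $\sigma_e$ and $\sigma_e'$ are linked on the boundary of the $(\abs{T}+2)$-gon by construction, so each $\sigma_e$-corridor must cross each $\sigma_e'$-corridor at least once, producing at least $\abs{w}^2$ crossings; since $\Ss_{\sigma_e}\cap\Ss_{\sigma_e'}$ consists only of internal generators, case \eqref{cs3} of Lemma \ref{crossingsquares} is excluded, and each crossing contributes a region of area $2$, for a total of at least $2\abs{w}^2$. When $e$ is peripheral with shared leaf $v_\nu$, each of the $\abs{w}$ boundary edges of block $\nu$ supports a shared-start case \eqref{cs3} crossing triangle of area $1$; moreover, a direct inspection of the cyclic order of the four endpoints of a $\sigma_e$-chord from $\nu$-position $k$ and a $\sigma_e'$-chord from $\nu$-position $l$ under the reverse-order matchings shows that, among ordered pairs with $k \neq l$, exactly those with $k<l$ (or exactly those with $k>l$, depending on the cyclic order of $\nu_j$ and $\nu_k$) are forced to interleave, contributing $\binom{\abs{w}}{2}$ interior crossings of area $2$. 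The peripheral contribution is therefore at least $\abs{w} + 2\binom{\abs{w}}{2} = \abs{w}^2$, and summing produces the required bound.

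The principal obstacle is the peripheral-edge analysis: because $\sigma_e$ and $\sigma_e'$ share a boundary endpoint, the linked-endpoints criterion used for interior edges is unavailable, and one has to carry out the combinatorial interleaving count carefully for every possible cyclic arrangement of the blocks $\nu, \nu_j, \nu_k$ on $\partial\Delta'$, relying on the identity $\abs{w} + 2\binom{\abs{w}}{2} = \abs{w}^2$ to exactly match the area of the triangular $e$-region of the canonical diagram.
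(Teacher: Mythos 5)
Your proposal is correct and follows essentially the same strategy as the paper's proof: use the exactly-$|w|$ band corridors per $\sigma$ (ruled in by the reducedness of $w$ via the Proposition~\ref{balanced} argument), then count $e$-crossing regions edge by edge using Lemma~\ref{crossingsquares} and Lemma~\ref{e-f-regions}, distinguishing interior edges (all $\abs{w}^2$ pairs cross, area $2$ each) from peripheral edges ($\abs{w}$ crossing triangles plus $\binom{\abs{w}}{2}$ crossing squares). The only difference is that you pin down exactly which half of the off-diagonal pairs cross at a peripheral edge, whereas the paper contents itself with "at least $\ell(\ell-1)/2$"; this is a small sharpening of the same computation, not a different route.
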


\begin{proof}
Let $\Delta$ be the canonical diagram of $w$ and let $\Delta'$ be an
arbitrary van Kampen diagram with the same boundary word. 
Let $\ell = \abs{w}$. 

First we claim that for any choice of ${\sigma}$, say with endpoints
$v_{i}$ and $v_{j}$, there are exactly $\ell$ band type
${\sigma}$--corridors in $\Delta'$, each joining a letter in
$w(a_{i},b_{i})$ with a letter in $w(a_{j}, b_{j})$. Certainly, these two
subwords of \eqref{standardword} contain the only occurrences of
${\Ss}_{{\sigma}}$--edges in the boundary of $\Delta'$, so the number of such
corridors can only be $\ell$. Also, no such corridor can join two
letters of the same subword $w(a_i, b_i)$ or $w(a_j,b_j)$; using Remark
\ref{corridorremark} as in the proof of Proposition \ref{balanced} one
finds that $w$ must then fail to be reduced.  

Now let us identify crossing squares and triangles in $\Delta'$. If $e$
is an internal edge of $\widehat{T}$ then every ${\sigma}_e$--corridor
crosses every ${\sigma}_e'$--corridor, by the linking requirement on
${\sigma}_e$ and ${\sigma}_e'$ (cf. Figure \ref{fig:delta-e}). Thus there are
exactly $\ell^2$ $e$-crossing squares for such $e$. 

If $e$ is a peripheral edge of $\widehat{T}$ incident to $v_i$, say, then
some pairs of ${\sigma}_e$-- and ${\sigma}_e'$--corridors cross and some do
not. There is one corridor of each type (${\sigma}_e$ or ${\sigma}_e'$)
emanating from each letter in the subword $w(a_i,b_i)$ on the boundary of
$\Delta'$, and this accounts for all ${\sigma}_e$-- and 
${\sigma}_e'$--corridors. For each letter in $w(a_i,b_i)$ the two corridors
emanating there will contain an $e$--crossing triangle; there are $\ell$ such
corridor pairs. Of the remaining corridor pairs, half of them definitely 
cross (because their endpoints on the boundary are linked), yielding
$e$--crossing squares. There are at least $\ell(\ell-1)/2$ of these. In
total we have identified $e$-crossing regions of total area $2\ell^2$
when $e$ is an internal edge, and of total area $\ell^2$ when $e$ is a
peripheral edge. By Lemma \ref{e-f-regions} we conclude that 
$\area(\Delta') \geq \area(\Delta)$. 
\end{proof}

\subsection*{Doubled canonical diagrams}
For any palindromic word $w$, take the canonical diagrams of $w$ and of
$w^{-1}$ and join them along their boundary subwords labeled
$w(a_{\nu_0},b_{\nu_0})$ and $w(a_{\nu_0},b_{\nu_0})^{-1}$ to form a new
diagram, called the \emph{doubled canonical diagram of $w$}. Its boundary
word is given by 
\begin{equation}\label{doubledword}
w(a_{\nu_1}, b_{\nu_1}) \dotsm w(a_{\nu_m}, b_{\nu_m}) w(a_{\nu_1},
b_{\nu_1})^{-1} \dotsm w(a_{\nu_m}, b_{\nu_m})^{-1}. 
\end{equation}
If $w$ is reduced, then so is its doubled canonical diagram. 

\begin{proposition}\label{doubledarea}
Let $w(x,y)$ be a reduced palindromic word. The doubled canonical diagram
of $w$ is least-area. 
\end{proposition}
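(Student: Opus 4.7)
I will adapt the approach in the proof of Proposition \ref{leastarea}, counting $e$-crossing regions for each edge $e$ of $\widehat{T}$. Let $\ell = \abs{w}$. The doubled canonical diagram $\Delta$ is built from two canonical diagrams (of $w$ and of $w^{-1}$) glued along the subword $w(a_{\nu_0},b_{\nu_0})$, so $\area(\Delta) = 2\cdot 3\abs{T}\ell^2 = 6\abs{T}\ell^2$. Let $\Delta'$ be any reduced van Kampen diagram with boundary word \eqref{doubledword}. My strategy is to lower-bound the total area of $e$-crossing regions in $\Delta'$ and conclude via Lemma \ref{e-f-regions} (which asserts their pairwise disjointness) that $\area(\Delta') \geq 6\abs{T}\ell^2$.

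The first step is to analyze the $\sigma$-corridor structure for each maximal segment $\sigma$ in $\widehat{T}$. For such $\sigma$ with endpoints $v_{\nu_i}, v_{\nu_j}$, the ${\Ss}_{\sigma}$-edges on $\partial \Delta'$ come from peripheral letters in \eqref{doubledword}. Because each $W_k = w(a_{\nu_k}, b_{\nu_k})$ with $k\geq 1$ appears twice in the boundary word (with opposite orientations) while $W_0$ is absent, there are $4\ell$ such edges if $v_{\nu_0} \notin \{v_{\nu_i},v_{\nu_j}\}$, and $2\ell$ otherwise. Then by Remark \ref{corridorremark} combined with reducedness of $w$, exactly as in the proof of Proposition \ref{leastarea}, no band $\sigma$-corridor can join two letters within a single occurrence of a peripheral subword; the boundary edges therefore pair into $2\ell$ (resp.\ $\ell$) band corridors, each joining letters in distinct occurrences.

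Next I count $e$-crossings for each edge $e$ of $\widehat{T}$ by examining how $\sigma_e$-corridors and $\sigma_e'$-corridors link along $\partial \Delta'$, proceeding by case analysis on how many of $\sigma_e,\sigma_e'$ have $v_{\nu_0}$ as an endpoint. The cyclic block arrangement $W_1,\dots,W_m,W_1^{-1},\dots,W_m^{-1}$, together with the non-crossing property of same-scheme corridors and the orientability of ${\Ss}_\sigma$, forces corridors whose endpoints lie in the block $W_1\cdots W_m$ to link exactly as in the proof of Proposition \ref{leastarea}, and similarly for those in the block $W_1^{-1}\cdots W_m^{-1}$. This yields total $e$-crossing area $4\ell^2$ for each internal edge of $\widehat{T}$ and $2\ell^2$ for each peripheral edge. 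Summing over the $\abs{T}-1$ internal edges and $\abs{T}+2$ peripheral edges of $\widehat{T}$ gives
\begin{equation*}
4\ell^2(\abs{T}-1) + 2\ell^2(\abs{T}+2) \ = \ 6\abs{T}\ell^2,
\end{equation*}
completing the argument.

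The principal obstacle is the combinatorial verification in the counting step: the doubled structure of the boundary affords more freedom in how ${\Ss}_\sigma$-edges pair into corridors, so one must check that the orientability and non-crossing constraints still force enough linkings between $\sigma_e$- and $\sigma_e'$-corridors to produce the required $e$-crossing regions. The key insight is that these constraints effectively segregate the corridors into two essentially non-interacting families, one for each half of the doubled boundary, each contributing crossings exactly as in Proposition \ref{leastarea}, so that the total count is simply doubled.
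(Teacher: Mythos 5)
Your plan follows the paper's approach exactly: determine the $\sigma$-corridor pairings in an arbitrary diagram $\Delta'$, count $e$-crossing regions, and invoke Lemma \ref{e-f-regions}; and the tally $4\ell^2(\abs{T}-1)+2\ell^2(\abs{T}+2)=6\abs{T}\ell^2$ is right. The gap is in the sentence asserting that the ``cyclic block arrangement \dots together with the non-crossing property \dots and the orientability of $\Ss_{\sigma}$'' forces the corridor pairings. For a segment $\sigma$ avoiding $v_{\nu_0}$, reducedness rules out a corridor with both ends in a single occurrence of a peripheral subword, but three pairings of the four subwords $W_i$, $W_j$, $W_i^{-1}$, $W_j^{-1}$ remain a priori: the desired one ($W_i\leftrightarrow W_j$, $W_i^{-1}\leftrightarrow W_j^{-1}$), the crossed one ($W_i\leftrightarrow W_j^{-1}$, $W_i^{-1}\leftrightarrow W_j$), and a ``self'' one involving $W_i\leftrightarrow W_i^{-1}$. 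The self pairing is ruled out by a room-counting argument (such a corridor would separate $W_j$ from $W_j^{-1}$, leaving no place for the $W_j$ corridors to land), which the bare ``non-crossing property'' does not by itself deliver.

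Ruling out the crossed pairing is the genuinely delicate step, and a naked appeal to orientability does not close it. Orientability only says that the two ends of a band corridor are traversed with opposite transverse signs along $\partial\Delta'$; that constraint is perfectly consistent with $W_i\leftrightarrow W_j^{-1}$ unless one also pins down the \emph{relative} $\Ss_{\sigma}$-orientations of the $a_i$- and $a_j$-labeled edges. The paper determines these by pointing to a corridor that demonstrably exists in the (different) canonical diagram of $w$: it joins the first letter of $W_i$ (say $a_i$) to the \emph{last} letter of $W_j$, and that last letter is $a_j$ precisely because $w$ is palindromic. This forces the $\Ss_{\sigma}$-orientations of those edges to be opposite, after which a putative $W_i\leftrightarrow W_j^{-1}$ corridor joining $a_i$ to $a_j^{-1}$ contradicts orientability. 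The use of the palindromic property and of the already-constructed canonical diagram as a comparison object is the heart of the argument and is absent from your outline. One smaller issue: the closing slogan that the corridors ``segregate into two essentially non-interacting families'' is not literally true for segments through $v_{\nu_0}$, where every $\sigma$-corridor runs between the two halves; your case analysis appears to accommodate this, but the slogan should be dropped.
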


\begin{proof}
Let $D\Delta$ be the doubled canonical diagram of $w$ and let $\Delta'$
be an arbitrary van Kampen diagram with the same boundary word. 

Let ${\sigma}$ be a maximal segment in $\widehat{T}$ that does not contain
$v_{\nu_0}$. The ${\Ss}_{{\sigma}}$--edges on the boundary of
$\Delta'$ comprise four subwords $w(a_i,b_i)$, $w(a_j,b_j)$,
$w(a_i,b_i)^{-1}$, $w(a_j,b_j)^{-1}$, arranged in this cyclic ordering. 
We claim that the ${\sigma}$--corridors joining letters in these subwords
must in fact join all the letters of $w(a_i,b_i)$ to those of $w(a_j,
b_j)$, and similarly with $w(a_i,b_i)^{-1}$ and $w(a_j,b_j)^{-1}$. 

First, as before, no ${\sigma}$--corridor joins two letters of the same
subword, because $w$ is reduced. Next, no corridor runs between
$w(a_i,b_i)$ and $w(a_i,b_i)^{-1}$ (or $w(a_j,b_j)$ and
$w(a_j,b_j)^{-1}$) because then there is no room for the remaining
corridors to be disjoint. Thus, the ${\sigma}$--corridors must be arranged
as in Figure \ref{fig:corridor-config}. 
\begin{figure}[ht]
\labellist
\hair 2pt
\small
\pinlabel {$w_i$} [Bl] at 103 92
\pinlabel {$w_j$} [tl] at 104 20
\pinlabel {$w_j$} [Br] at 10 94
\pinlabel {$w_i$} [tr] at 9 19

\endlabellist
\includegraphics{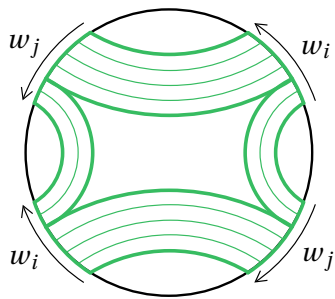}
\caption{A configuration of
  $\sigma$--corridors.}\label{fig:corridor-config} 
\end{figure}
It is evident that if any corridor joins $w(a_i,b_i)$ to
$w(a_j,b_j)^{-1}$, then there is such a corridor joining the \emph{first}
letter of $w(a_i,b_i)$ to the \emph{last} letter of $w(a_j,b_j)^{-1}$. If
the first letter is, say, $a_i$ (in the orientation of the boundary of
$\Delta'$), then the corridor joins it to $a_j^{-1}$. On the other hand,
in the \emph{canonical} diagram of $w$, there is a corridor joining $a_i$
in the boundary to the last letter of $w(a_j,b_j)$, which is $a_j$
(because $w$ is palindromic). The existence of both corridors, even in
different diagrams, contradicts the orientability of $\Ss_{\sigma}$
established in Remark \ref{corridorremark}. Therefore all corridors join
$w(a_i,b_i)$ to $w(a_j,b_j)$ or $w(a_i,b_i)^{-1}$ to $w(a_j,b_j)^{-1}$,
as claimed. 

If ${\sigma}$ is a maximal segment with endpoints $v_{\nu_0}$ and $v_i$
then the only ${\Ss}_{{\sigma}}$--edges on the boundary are the two subwords
$w(a_i,b_i)$ and $w(a_i,b_i)^{-1}$, and all ${\sigma}$--corridors run
between them.  

The rest of the proof now proceeds without difficulty just like
Proposition \ref{leastarea}. We have complete knowledge of which pairs of
edges in the boundary of $\Delta'$ are joined by corridors of various
kinds, and these pairings are in agreement with those of $D\Delta$. One
easily finds the requisite numbers of $e$--crossing regions for each $e$
and concludes that $\area(\Delta') \geq \area(D\Delta)$. 
\end{proof}

\section{Snowflake diagrams}\label{sec:lower}

Snowflake diagrams, defined below, will be used to establish the lower
bound in the proof of Theorem \ref{dehnfunction}. 

\subsection*{The $2$--complex $Y_{T,n}$} Recall that $S_{T,n}$ was defined
via the relative presentation \eqref{smn}. Starting with $X_T$, adjoin
$1$--cells and $2$--cells according to this relative presentation to
obtain the $2$--complex $Y_{T,n}$ with fundamental group $S_{T,n}$. There
will be $m$ new $1$--cells labeled $r_1, \dotsc, r_m$ and $2m$ new
$2$--cells with boundary words given by the relators of \eqref{smn}. 

\subsection*{$r_i$--corridors} For each $i = 1, \dotsc, m$ there is an
orientable corridor scheme consisting of the single edge labeled
$r_i$. It has two corridor cells which we think of as being rectangular,
with sides labeled by the words 
\begin{equation}\label{word1}
\phi^n(a_{\nu_i})^{-1}, \ r_i, \ a_{\nu_0},\ r_i^{-1}
\end{equation}
and 
\begin{equation}\label{word2}
\phi^n(b_{\nu_i})^{-1}, \ r_i, \ b_{\nu_0},\ r_i^{-1}.
\end{equation}
The sides labeled by $a_{\nu_0}$ or $b_{\nu_0}$ will be called the
\emph{short sides} and the sides labeled by $\phi^n(a_{\nu_i})^{-1}$ or
$\phi^n(b_{\nu_i})^{-1}$ the \emph{long sides} of the corridor cells. The
corridors for this scheme are called \emph{$r_i$--corridors}. 

Note that in any $r_i$--corridor in a \emph{reduced} van Kampen diagram,
the short sides of the corridor cells join up to form a single arc in the
boundary of the corridor, labeled by a reduced word $w$ in the generators
$a_{\nu_0}$, $b_{\nu_0}$. If this word happens to be monotone, then the
long sides of the corridor cells also assemble to form a monotone (and
reduced) word $\phi^n(w)(a_{\nu_i}, b_{\nu_i})$.

\subsection*{Snowflake diagrams} Let $w(x,y)$ be a \emph{monotone}
palindromic word. We will define van Kampen diagrams over 
$Y_{T,n}$ based on $w$ and an integer $d$ (the \emph{depth}) denoted
$\Delta(w,d)$. To begin, we define $\Delta(w,0)$ to be the doubled
canonical diagram of $w$. 

Next, to define $\Delta(w,d)$ for $d > 0$, start with the diagram
$\Delta(\phi^n(w),d-1)$ (noting that $\phi^n(w)$ is also monotone 
and palindromic, by our assumptions on $\phi$). Its boundary word 
will have subwords of the form $\phi^n(w)(a_{\nu_i},b_{\nu_i})^{\pm
  1}$ for each $i = 1, \dotsc, m$. Alongside each subword
$\phi^n(w)(a_{\nu_i},b_{\nu_i})^{\epsilon}$ adjoin a rectangular strip
made of $\abs{w}$ $2$--cells whose four sides are labeled by the words 
\[\phi^n(w)(a_{\nu_i},b_{\nu_i})^{-\epsilon}, \ r_i,
\ w(a_{\nu_0},b_{\nu_0})^{\epsilon}, \ r_i^{-1}.\] Then, adjoin a copy of
the canonical diagram of $w^{-\epsilon}$, which contains a side labeled 
$w(a_{\nu_0}, b_{\nu_0})^{-\epsilon}$. 

Doing this for each subword as described, one obtains $\Delta(w,d)$. See
Figure \ref{fig:snowflake}. Note that the boundary of $\Delta(w,d)$
contains many copies of the subwords $w(a_{\nu_i},b_{\nu_i})^{\pm 1}$ for
each $i = 1, \dotsc, m$. In fact, it is easy to verify by induction on
$d$ that the boundary word is made entirely of copies of these words,
together with occurrences of the letters $r_i^{\pm 1}$. 
\begin{figure}[ht]
\labellist
\hair 2pt
\small
\pinlabel {$\textcolor[HTML]{0000d0}{\vdots}$} [B] at 146.5 113
\pinlabel {$\textcolor[HTML]{0000d0}{\vdots}$} [t] at 216.5 50.5
\pinlabel* {\rotatebox[origin=c]{144}{$\textcolor[HTML]{0000d0}{\vdots}$}}
at 98.5 138
\pinlabel* {\rotatebox[origin=c]{36}{$\textcolor[HTML]{0000d0}{\vdots}$}}
at 26 142.5
\pinlabel* {\rotatebox[origin=c]{36}{$\textcolor[HTML]{0000d0}{\vdots}$}}
at 178 22
\pinlabel* {\rotatebox[origin=c]{108}{$\textcolor[HTML]{0000d0}{\vdots}$}}
at 1 68
\pinlabel* {\rotatebox[origin=c]{72}{$\textcolor[HTML]{0000d0}{\vdots}$}}
at 2 10

\pinlabel {$w_0$} [tl] at 251 75
\pinlabel {$w_1$} [tr] at 261 49
\pinlabel {$w_2$} [tl] at 284 49.5
\pinlabel {$w_3$} [Bl] at 293.5 75
\pinlabel {$w_4$} [B] at 271 95.5

\pinlabel {$w_0$} [Bl] at 237 120
\pinlabel {$w_1$} [tl] at 262.5 114.5
\pinlabel {$w_2$} [Bl] at 272.5 140
\pinlabel {$w_3$} [B] at 250 161
\pinlabel {$w_4$} [Br] at 230 148

\pinlabel {$w_0$} [Br] at 196 121
\pinlabel {$w_1$} [Bl] at 200.5 148
\pinlabel {$w_2$} [B] at 182 161
\pinlabel {$w_3$} [Br] at 158.5 141
\pinlabel {$w_4$} [tr] at 165 118

\pinlabel {$\phi^n w_2$} [Br] at 242 77
\pinlabel {$\phi^n w_0$} [Bl] at 187.5 82
\pinlabel {$\phi^{2n} w_3$} [tr] at 178.5 81
\pinlabel {$\phi^{2n} w_0$} [tl] at 112 74
\pinlabel {$\phi^{3n} w_1$} [Br] at 102 76.6
\pinlabel {$\phi^{3n} w_0$} [B] at 63 52
\pinlabel {$\phi^{4n} w_3$} [t] at 63 37

\endlabellist
\includegraphics[width=5in]{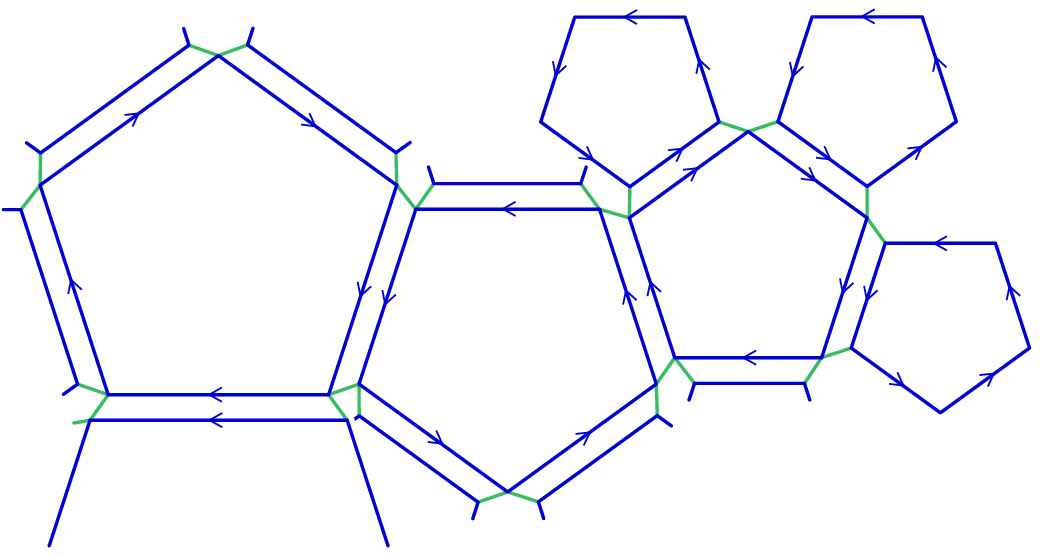}
\caption{Part of the snowflake diagram $\Delta(w,d)$, with   $m=4$ (and
  $d \geq 4$). The word $\phi^{in}(w)(a_{\nu_j},b_{\nu_j})$ is
  abbreviated as $\phi^{in}w_j$. The pentagonal regions are canonical 
  diagrams and the strips between them are $r_j$--corridors for various
  $j$.}\label{fig:snowflake}  
\end{figure}

Note that $\Delta(w,d)$ contains a sub-diagram $\Delta(\phi^{in}(w),d-i)$
for each $i$ between $0$ and $d$. In particular, it contains a copy of
$\Delta(\phi^{dn}(w),0)$, which is a doubled canonical diagram of area
$6\abs{T}\abs{\phi^{dn}(w)}^2$. 

\begin{proposition}\label{sfleastarea}
Let $w(x,y)$ be a monotone palindromic word. For each $d \geq 0$ the
diagram $\Delta(w,d)$ is least-area. 
\end{proposition}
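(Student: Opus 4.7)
I propose induction on the depth $d$. The base case $d = 0$ is exactly Proposition \ref{doubledarea}, since $\Delta(w,0)$ is by definition the doubled canonical diagram of $w$. For the inductive step, note the recursive area identity
\[
\area(\Delta(w,d)) \ = \ \area(\Delta(\phi^n(w),d-1)) + 2m^d\bigl(\abs{w} + 3\abs{T}\abs{w}^2\bigr),
\]
where $2m^d$ counts the outermost strips added to $\Delta(\phi^n(w),d-1)$, each contributing $\abs{w}$ cells from the strip and $3\abs{T}\abs{w}^2$ from the attached canonical cap of $w^{\pm 1}$.

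Let $\Delta'$ be an arbitrary reduced van Kampen diagram over $Y_{T,n}$ with the same boundary word as $\Delta(w,d)$. Each $r_j^{\pm 1}$-letter on $\partial\Delta'$ is the endpoint of a unique band-type $r_j$-corridor, and these corridors form a non-crossing matching of the $r$-letters in which each pair consists of oppositely oriented occurrences, by orientability of the $r_j$-corridor scheme. On $\partial\Delta(w,d)$ the $r$-letters occur in $2m^d$ \emph{outer pairs}, one per outermost strip, characterized by the property that the boundary between the two letters consists only of the $m$ exposed sides $w(a_{\nu_k},b_{\nu_k})^{\pm}$ of the attached canonical cap. The core step, and the principal obstacle, is to show that in $\Delta'$ each outer pair is still matched by a single $r_j$-corridor. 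This argument must invoke the balancing property of $V_T$ (Proposition \ref{balanced}) in an essential way: orientability alone does not determine the matching when a generator $r_j$ appears multiple times on the boundary, and any alternative non-standard matching would cut off a subdiagram whose boundary word expresses an element of one peripheral subgroup using generators of several other peripheral subgroups, which the balancing inequality forces to be inefficient enough to rule out any area saving.

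Once the matching is established, each outer-pair $r_j$-corridor $C$ has short side $u_C$ that is a reduced word in $a_{\nu_0},b_{\nu_0}$ (reducedness of $\Delta'$ prevents two inverse corridor cells being glued along an $r_j$-edge). This short side represents $w(a_{\nu_0},b_{\nu_0})^{\epsilon}$ in the free group $A_{\nu_0}$; since $w$ is monotone hence reduced, and reduced representatives in free groups are unique, $u_C$ equals $w(a_{\nu_0},b_{\nu_0})^{\epsilon}$ as a word. Thus $C$ has exactly $\abs{w}$ cells. The outer subdiagram $\Delta_C$ cut off by $C$ has the boundary word
\[
w(a_{\nu_1},b_{\nu_1})^{\epsilon}\cdots w(a_{\nu_m},b_{\nu_m})^{\epsilon}\, w(a_{\nu_0},b_{\nu_0})^{-\epsilon}
\]
of a canonical diagram of $w^{\epsilon}$, so Proposition \ref{leastarea} gives $\area(\Delta_C) \geq 3\abs{T}\abs{w}^2$.

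Excising the $2m^d$ outer-pair corridors and their outer subdiagrams from $\Delta'$ leaves a sub-diagram $\Delta''$ whose boundary word is that of $\Delta(\phi^n(w),d-1)$. Since $\phi^n(w)$ is monotone and palindromic, the inductive hypothesis applies and gives $\area(\Delta'') \geq \area(\Delta(\phi^n(w),d-1))$. Summing with the corridor and cap contributions yields $\area(\Delta') \geq \area(\Delta(w,d))$, completing the induction. The bulk of the work, as noted, lies in forcing the outer-pair matching; the area accounting after that step is mechanical and mirrors the analogous portions of Propositions \ref{leastarea} and \ref{doubledarea}.
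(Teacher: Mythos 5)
Your proposal follows the paper's overall strategy: induct on $d$, use Proposition \ref{doubledarea} as the base case, identify the band-type $r_j$-corridors in a competitor diagram $\Delta'$, excise the outer corridors and caps, and apply the inductive hypothesis. The gap is at the step you yourself call the "principal obstacle": showing that the $r$-letters on $\partial\Delta'$ are matched by corridors in the same nested pattern as in $\Delta(w,d)$. You assert without proof that this "must invoke the balancing property" and sketch a strategy of ruling out non-standard matchings as \emph{area-inefficient}. This is not what the paper does: the paper's proof never invokes Proposition \ref{balanced}, but instead deduces the forced matching from the fact that $r_i$-- and $r_j$--corridors cannot cross for any pair $i,j$ (no $2$-cell of $Y_{T,n}$ is a corridor cell for two distinct $r$-schemes), combined with the nested structure of the $r$-letters on $\partial\Delta(w,d)$ and the requirement that subdiagrams of a van Kampen diagram have null-homotopic boundary words. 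Your inefficiency argument is also aimed at the wrong conclusion: a non-standard matching would cut off a subdiagram whose boundary word is forced to represent a nontrivial element of $V_T$ (a product of words from distinct peripheral subgroups, whose pairwise intersections are trivial), so such a matching is \emph{impossible}, not merely suboptimal. As written, your argument leaves open whether a non-standard matching might exist and save area, when in fact no such matching can arise in any van Kampen diagram.

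Once the matching is settled, the remainder of your argument — the short-side word equals $w(a_{\nu_0},b_{\nu_0})^{\epsilon}$ because it is reduced and $A_{\nu_0}$ is free, the cap regions have area at least that of the canonical diagram by Proposition \ref{leastarea}, and excising leaves the boundary word of $\Delta(\phi^n(w),d-1)$ — agrees with the paper's proof.
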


\begin{proof}
First note that $\Delta(w,d)$ is reduced, since it is made of reduced
sub-diagrams, separated by reduced $r_i$--corridors, which have no
$2$--cells in common with the sub-diagrams. (The sub-diagrams being
reduced depends on the monotonicity of $w$, which implies that the words
$\phi^{in}(w)$ are reduced.) 

Now suppose that $d=0$ and let $\Delta'$ be any reduced diagram over
$Y_{T,n}$ with the same boundary as $\Delta(w,0)$. We claim that there
are no $r_i$--corridors for any $i$. If there were, they would be of
annulus type, and the short side of the corridor would be labeled by a
cyclically reduced word $v(a_{\nu_0},b_{\nu_0})$ representing the trivial
element. Since $A_{\nu_0}$ is free on $a_{\nu_0}, b_{\nu_0}$, no such
corridors can exist. Therefore $\Delta'$ is actually a diagram over
$X_T$, and Proposition \ref{doubledarea} says that $\area(\Delta') \geq
\area(\Delta(w,0))$. 

Proceeding by induction on $d$, suppose that $d \geq 1$ and $\Delta'$ is
a reduced diagram over $Y_{T,n}$ of smallest area, with the same boundary
as $\Delta(w,d)$. As before, there
can be no $r_i$--corridors of annulus type. There will be band type
$r_i$--corridors joining occurrences of $r_i^{\pm 1}$ on the boundary. Note
that $r_i$-- and $r_j$--corridors cannot cross for any $i,j$ (no
$2$--cell is a corridor cell for both corridor schemes). Hence the
$r_i$--edges on the boundary must be paired by corridors in the same way
as in $\Delta(w,d)$. 

Consider an outermost $r_i$--corridor. Its complement in $\Delta'$ is
two sub-diagrams, one of which is a diagram over $X_T$ with boundary word 
\[ \widehat{w}(a_{\nu_0}, b_{\nu_0})^{\epsilon}
w(a_{\nu_1},b_{\nu_1})^{\epsilon} \dotsm
w(a_{\nu_m},b_{\nu_m})^{\epsilon}\] 
for some word $\widehat{w}$ and some $\epsilon = \pm 1$. Here,
$\widehat{w}(a_{\nu_0}, b_{\nu_0})$ is the word along the short side of
the corridor. Recall that in $V_T$, the word
$w(a_{\nu_1},b_{\nu_1})^{\epsilon} \dotsm
w(a_{\nu_m},b_{\nu_m})^{\epsilon}$ represents the element
$w(a_{\nu_0},b_{\nu_0})^{-\epsilon}$, which moreover is in the free
subgroup $A_{\nu_0}$. Since $\widehat{w}(a_{\nu_0},b_{\nu_0})$ is
reduced, it must equal $w(a_{\nu_0},b_{\nu_0})$. It follows that the
corridor, considered as a sub-diagram, is identical to the corresponding
corridor in $\Delta(w,d)$. Also, the part of $\Delta'$ on the short side
of the corridor is a diagram over $X_T$ with the same boundary as the
canonical diagram of $w^{\epsilon}$. By Proposition \ref{leastarea} its
area agrees with that of the canonical diagram. 

Taking all the outermost $r$--corridors and the
sub-diagrams that they separate from the central region in $\Delta'$, we
have found that these have total area equal to the corresponding regions
in $\Delta(w,d)$. Moreover, if we delete these regions, the resulting
boundary word is the boundary word of the corresponding sub-diagram
$\Delta(\phi^n(w),d-1)$ of $\Delta(w,d)$. By induction, the central
portion of $\Delta'$ has area equal to that of $\Delta(\phi^n(w),d-1)$
and we are done. 
\end{proof}

\section{Folded corridors and subgroup
  distortion} \label{sec:distortion} 

The main result of this section is Proposition \ref{distortion} (and its
variant Corollary \ref{distortioncor}) which
bounds the distortion of the edge group $A_{\nu_0}$ in $S_{T,n}$. After
discussing some preliminaries, we proceed to study \emph{folded
corridors}, culminating in Lemma \ref{segmentsnearlyabove}. This lemma
plays an important role in the proof of Proposition \ref{distortion},
which occupies the rest of the section. 

\medskip

Define the \emph{standard generating set} for $S_{T,n}$ to be
the standard generating set for $V_T$ together with the generators $r_1,
\dotsc, r_m$. Recall that the former generators include all generators
$x_i$ and $y_i$, and the peripheral generators $a_{\nu_i}$, $b_{\nu_i}$
($i = 0, \dotsc, m$). 

For $g \in A_{\nu_i}$ let $\abs{g}_{A_{\nu_i}}$ denote the length of the
\emph{reduced} word in the basis $a_{\nu_i}$, $b_{\nu_i}$ representing
$g$. Similarly, let $\aabs{g}_{A_{\nu_i}}$ be the \emph{weighted} word
length of the reduced representative (cf. Definition
\ref{weightedlengthdef}). Recall that the letters $a_{i}^{\pm 1}$ and
$x_i^{\pm 1}$ have weight $1 + \sqrt{2}$ and the letters $b_{i}^{\pm
  1}$ and $y_i^{\pm 1}$ have weight $1$. Let the letters $r_i^{\pm 1}$
also be given weight $1$. 

Now we assign lengths to the edges of $Y_{T,n}$, and correspondingly to
the edges in any van Kampen diagram over $Y_{T,n}$, as follows. Edges
labeled by $x_i$ or $a_i$ are given length $1+\sqrt{2}$, and all other
edges (those labeled $y_i$, $b_i$, or $r_i$) are given length
$1$. In this section, lengths of paths in a van Kampen diagram will
always be meant with respect to these edge lengths. 

With this convention, the length of a path in the $1$--skeleton will
agree with the weighted length of the word labeling it.

\subsection*{Folded corridors} 
Recall that each $r_i$--corridor cell has a short side and
a long side. In any $r_i$--corridor, the embedded open annulus
or open band inside it separates all of the short sides of the corridor
cells from the long sides. 

The boundary of an $r_i$--corridor is a $1$--complex containing zero or
two $r_i$--edges. The \emph{partial boundary} is defined to be the
boundary with the interiors of the $r_i$--edges removed. 

If $C$ is an $r_i$--corridor in a reduced diagram, 
then the short sides of its cells join to form a component of the partial 
boundary which is labeled by a reduced word in the generators
$a_{\nu_0}$, $b_{\nu_0}$. If $C$ were of annulus type, then we would have a
cyclically reduced word in the free group $A_{\nu_0}$ representing the
trivial element. Hence, $C$ must be of band type. 

Following \cite{BrGr}, a band type $r_i$--corridor is called 
\emph{folded} if it is reduced and every component of its partial
boundary is labeled by a reduced word in the generators of $S_{T,n}$. We 
have noted already that the short sides of corridor cells form a single
such component, which we now call the \emph{bottom} of the corridor. Any
other component is labeled by a reduced word in the generators
$a_{\nu_i}$, $b_{\nu_i}$. Again, such a component cannot be a loop (since
$A_{\nu_i}$ is free) and hence there is only one other component, which
we call the \emph{top} of the corridor. If $w(a_{\nu_0},b_{\nu_0})$ is
the reduced word along the bottom, then the top is labeled by the reduced
word in $a_{\nu_i}$, $b_{\nu_i}$ representing
$\phi^n(w)(a_{\nu_i},b_{\nu_i})$. 

\begin{remark}\label{corridorbuilding}
Given any reduced word $w(a_{\nu_0},b_{\nu_0})$, one can build a folded
corridor with bottom labeled by $w$. Start by joining corridor cells end
to end along $r_i$--edges to form a corridor with bottom side labeled by
$w$. Then, the long sides of the corridor cells form an arc labeled by a
possibly \emph{unreduced} word representing $\phi^n(w)(a_{\nu_i},
b_{\nu_i})$. By successively folding together adjacent pairs of edges
along the top (with matching labels), one eventually obtains a folded
corridor. Each folding operation corresponds to a free reduction in the
word labeling the top side of the corridor. The final word along the top
of the folded corridor is uniquely determined (being the reduced form of
$\phi^n(w)(a_{\nu_i},b_{\nu_i})$) but the internal structure will depend
on the particular sequence of folds chosen. 
\end{remark}

Let $C$ be a folded $r_i$--corridor. Define $S \subset C$ to be the
smallest subcomplex containing all the $r_i$--edges, and all the open
$1$--cells which lie in the interior of $C$ (informally, the \emph{seams}
in $C$). Note that $S$ contains exactly those edges of $C$ that are not
in the top or bottom. 

\begin{lemma}
Let $S_0$ be a connected component of\/ $S$. Then
\begin{enumerate}
\item \label{a1} $S_0$ is a tree; 
\item \label{a2} $S_0$ contains exactly one vertex in the top of\/ $C$; 
\item \label{a3} every valence-one vertex in $S_0$ lies in the top or
  bottom of\/ $C$. 
\end{enumerate}
\end{lemma}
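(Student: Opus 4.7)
The plan is to prove the three conclusions simultaneously by induction on the number of folds used to construct $C$ from its unfolded precursor $C_0$, the strip obtained by joining corridor cells end to end along their $r_i$-edges before any folding is performed. In the base case (zero folds), $S$ consists precisely of the $r_i$-edges; since no two $r_i$-edges share a vertex, each component of $S$ is a single edge with one endpoint on the top and one on the bottom, and all three conclusions are immediate.

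For the inductive step, suppose the conclusions hold in the state $C^*$ after $k$ folds and consider one further fold, which identifies two adjacent top edges $e_1, e_2$ at a common top vertex $v$ with outer endpoints $v_1, v_2$. In passing from $C^*$ to $C$, the vertex $v$ becomes interior, the vertices $v_1, v_2$ are identified to a new top vertex $v'$, and $e_1, e_2$ are identified to a single new interior edge $e$ joining $v$ to $v'$; this edge $e$ is added to $S$, while all other edges and their types are unchanged. The crux of the argument is the auxiliary claim that $v$ itself lies in $S(C^*)$. If $v$ was created by a previous fold, then the interior edge from that fold is already incident to $v$, so $v\in S(C^*)$. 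Otherwise $v$ is a vertex of the original $C_0$; since the long-side labels $\phi^n(a_{\nu_i})^{\pm 1}$ and $\phi^n(b_{\nu_i})^{\pm 1}$ of the corridor cells are reduced words in $F$ (as $\phi^n$ is a free-group automorphism), no cancellation can occur between two consecutive letters of a single cell's long side. Hence $v$ must sit at the junction between two adjacent cells, and the $r_i$-edge separating them witnesses $v\in S(C^*)$.

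Let $S_v^*$ denote the component of $S(C^*)$ containing $v$, and analogously $S_{v_1}^*, S_{v_2}^*$ when $v_1,v_2 \in S(C^*)$. By hypothesis (2) these components are pairwise distinct (their unique top vertices being $v, v_1, v_2$ respectively). In $C$ the new edge $e$ together with the identification $v_1 = v_2 = v'$ merges these (one, two, or three) components into a single component $S_{\text{new}}$, while components of $S(C^*)$ disjoint from $\{v, v_1, v_2\}$ pass unchanged to $S(C)$. For $S_{\text{new}}$, property (1) follows from a short Euler-characteristic check (the vertex identification and the added edge together preserve $\chi = 1$) combined with the visible connectedness of the merged structure, so $S_{\text{new}}$ is a tree. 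Property (2) holds because $v$ is now interior, the old top vertices $v_1, v_2$ have been merged into the single top vertex $v'$, and no other vertex of the merged components was on top in $C^*$. Property (3) follows because $v$'s new valence in $S_{\text{new}}$ is at least $2$ (this is where the auxiliary claim is essential), the vertex $v'$ lies on the top whenever it has valence one, and all other valence-one vertices retain their position on the bottom by the inductive hypothesis.

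The hardest step is the auxiliary claim that the pivot $v$ of any fold already belongs to $S(C^*)$: without it, a fold at a vertex carrying no prior $S$-edges would produce a valence-one vertex in the interior, violating (3). The claim rests entirely on the reducedness of the cell long-side labels produced by $\phi^n$, so this is where the free-group automorphism structure intervenes in an essential way.
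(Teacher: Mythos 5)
Your proof is correct, but it takes a genuinely different route from the paper's. The paper gives a short, direct topological argument: observing first that every $2$--cell of $C$ meets the bottom in exactly one edge, it deduces (1) and (2) at once (a loop in $S_0$, or a path in $S_0$ between two top vertices, would separate some $2$--cell from the bottom); observing second that each $2$--cell boundary is a \emph{cyclically} reduced word (this is where $\phi^n$ being an automorphism enters), it deduces (3) (an interior valence-one vertex of $S_0$ would force two adjacent edges of a single $2$--cell to be identified). You instead argue by induction on the number of folds, tracking how the seam complex evolves from the unfolded strip and isolating the auxiliary claim that every fold's pivot already lies in the current seam complex. Both arguments hinge on the same algebraic input (reducedness of $\phi^n(a_{\nu_i})$, $\phi^n(b_{\nu_i})$), but the paper's version is shorter and avoids the bookkeeping of a fold-by-fold induction, while yours is more constructive and exposes the correspondence between seam edges and folds. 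One small imprecision in your write-up: the dichotomy ``$v$ was created by a previous fold'' versus ``$v$ is a vertex of the original $C_0$'' is not exclusive as stated, since an original vertex that has been an outer endpoint $v_1$ or $v_2$ of an earlier fold is also a fold result; you want ``$v$ is the merged vertex $v'$ of some earlier fold'' versus ``$v$ is an original vertex that has never been involved in a fold,'' and with that reading the argument goes through as you describe.
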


Conclusions \eqref{a2} and \eqref{a3} imply that $S_0$ contains at least
one vertex in the bottom of $C$. 

\begin{proof}
First note that every $2$--cell of $C$ meets the bottom in
exactly one edge. Conclusion \eqref{a1} follows immediately since a loop
in $S_0$ would separate a $2$--cell from the bottom. For the same reason,
$S_0$ cannot contain two or more vertices of the top. 

A second observation is that the boundary of every $2$--cell is labeled by
a cyclically reduced word (namely, \eqref{word1} or \eqref{word2}). Hence
no two adjacent edges of the same cell 
can be folded together. Therefore $S_0$ cannot have a valence-one vertex
in the interior of $C$, whence \eqref{a3}. 

It remains to show that $S_0$ contains a vertex of the top. If not, then
it is separated from the top by a $2$--cell which then must meet the bottom
in a disconnected set, contradicting the initial observation above. 
\end{proof}

Let $p$ be a vertex in the top of $C$ and $q$ a vertex in the bottom. We
say that $p$ \emph{is above} $q$ if both vertices are in the same
connected component of $S$. We have the following 
``bounded cancellation'' lemma, which is a restatement of Lemma 1.2.4 of
\cite{BrGr}: 

\begin{lemma}\label{boundedcancellation}
There is a constant $K_0 = K_0(\phi^n)$ such that if $p$ is a vertex in the
top of a folded corridor and $q_1$, $q_2$ are
vertices that are both below $p$, then the sub-segment 
$[q_1, q_2]$ of the bottom 
has at most $K_0$ edges. \qed
\end{lemma}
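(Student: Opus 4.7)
The plan is to recast the statement as a bounded-cancellation problem for the automorphism $\phi^n$, and then to invoke Cooper's classical bounded cancellation lemma.

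First I would model $C$ as a quotient of an \emph{unfolded} corridor $\widetilde{C}$, obtained by attaching the cells of $C$ in sequence along their $r_i$-edges but without performing any of the folds. If $w = u_1 u_2 \cdots u_k$ denotes the reduced word labeling the bottom of $C$, then the top of $\widetilde{C}$ is labeled by the (possibly unreduced) concatenation $\phi^n(u_1)\phi^n(u_2)\cdots\phi^n(u_k)$. The quotient map $\widetilde{C} \to C$ realizes a sequence of Stallings folds along the top which together effect the free reduction of this concatenation down to the reduced word labeling the top of $C$. Under this correspondence, each bottom vertex $q_s$ (between cells $s$ and $s{+}1$) is joined by an $r_i$-edge to the top position $L_s = \sum_{i \le s} \abs{\phi^n(u_i)}$ of $\widetilde{C}$.

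Next I would establish the key combinatorial characterization: two bottom vertices $q_{s_1}, q_{s_2}$ with $s_1 < s_2$ lie in a common component of $S$ if and only if no letter of the block $\phi^n(u_{s_1+1})\cdots\phi^n(u_{s_2})$ survives onto the outer top of $C$, i.e.\ the block is entirely consumed by cancellation with the neighboring portions of the top during reduction. The forward direction follows since a top edge strictly between positions $L_{s_1}$ and $L_{s_2}$ of $\widetilde{C}$ that survives reduction would disconnect the seam tree along that edge, placing $q_{s_1}$ and $q_{s_2}$ in distinct components. The reverse direction follows since the chain of folds that absorbs the intermediate block produces a sequence of interior edges in $S$ connecting the $r_i$-edges at $L_{s_1}$ and $L_{s_2}$.

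I would then invoke Cooper's bounded cancellation lemma for the automorphism $\phi^n$: there exists a constant $K = K(\phi^n)$ such that for any reduced word $\alpha\beta$ in $F$, at most $K$ letters are cancelled from each side at the boundary of $\phi^n(\alpha)\phi^n(\beta)$ during free reduction. Applied at the two boundaries of the factorization $w = (u_1\cdots u_{s_1})(u_{s_1+1}\cdots u_{s_2})(u_{s_2+1}\cdots u_k)$, the intermediate block can lose at most $K$ letters from each of its two ends; if it is entirely absorbed, then $\abs{\phi^n(u_{s_1+1}\cdots u_{s_2})} \leq 2K$. Since the inverse automorphism $\phi^{-n}$ is Lipschitz with some constant $L = L(\phi^n)$, we obtain $s_2 - s_1 = \abs{u_{s_1+1}\cdots u_{s_2}} \leq 2KL$, and setting $K_0 := 2KL$ completes the argument, since $[q_1, q_2]$ has exactly $s_2 - s_1$ edges.

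The main obstacle will be making the combinatorial characterization in the second paragraph fully rigorous: one must carefully track the correspondence between Stallings folds and free-reduction cancellations and verify that a surviving top edge in the appropriate range truly separates seam components, so that common membership in a seam component is equivalent to total absorption of the intermediate block.
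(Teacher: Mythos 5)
Your proposal is correct in outline and is the expected argument. The paper gives no proof here, citing instead Lemma 1.2.4 of Bridson--Groves \cite{BrGr}, and your reduction to Cooper's bounded cancellation lemma is essentially a reconstruction of that result. Two remarks that would make the write-up tight. First, for the application you only need one direction of your characterization, namely that if $q_{s_1}$ and $q_{s_2}$ lie in the same component $S_0$ of $S$ then no letter of the block $\phi^n(u_{s_1+1})\cdots\phi^n(u_{s_2})$ survives to the top of $C$; the clean proof is by planar separation rather than by tracking fold edges. The arc in the tree $S_0$ from $q_{s_1}$ to $q_{s_2}$, together with the bottom segment $[q_{s_1},q_{s_2}]$, is an embedded loop bounding a sub-disk $D\subset C$ whose $2$--cells are exactly cells $s_1+1,\dots,s_2$; since $\partial D$ consists of bottom edges and seams only, the long sides of those cells cannot lie on the top of $C$, so all of them are seams. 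The reverse implication, which you gesture at, requires more care about the order in which folds are performed and is not actually used. Second, Cooper's lemma governs a single junction, so the two-sided bound needs a short extra argument: reduce first at $B\mid C$, removing at most $K$ letters from the right end of $\widetilde{\phi^n(B)}$, and then at $A\mid(BC)$, removing at most $K$ more from the left; total absorption of the block therefore forces $\aabs{\phi^n(u_{s_1+1}\cdots u_{s_2})}$, i.e.\ the reduced length, to be at most $2K$, whence $s_2-s_1\le 2KL$ as you conclude. With those points spelled out the argument is complete.
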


For vertices $p$ in the top and $q$ in the bottom,  we say that $p$ 
\emph{is nearly above} $q$ if there is a vertex $p'$ above $q$ such that
$p$ and $p'$ are in the boundary of a common $2$--cell. It is clear that
every vertex in the top is nearly above some vertex on the bottom. 

\begin{lemma}\label{nearlyabove}
There is a constant $K_1 = K_1(\phi^n)$ such  that if\/ $C$ is a folded
corridor and $p$ is nearly above $q$ in $C$, then there is a path
in the $1$--skeleton of\/ $C$ from $p$ to $q$, containing no bottom
edges, of length at most $K_1$. 
\end{lemma}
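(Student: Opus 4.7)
The plan is to produce an explicit path from $p$ to $q$ in two pieces: first a short detour along the boundary of the common $2$--cell from $p$ to $p'$, and then the unique tree-geodesic from $p'$ to $q$ inside the component $S_0$ of $S$ that contains both.

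For the first piece, let $F$ be the common $2$--cell, so $p$ and $p'$ are vertices on $\partial F$; being top vertices of $C$, they must in fact lie on the top side of $F$. That top side carries the label $\phi^n(a_{\nu_i})^{-1}$ or $\phi^n(b_{\nu_i})^{-1}$ from \eqref{word1} or \eqref{word2}, so it has at most $\max_i\{\abs{\phi^n(a_{\nu_i})},\abs{\phi^n(b_{\nu_i})}\}$ edges. The path along this top side uses only top edges of $F$, hence avoids bottom edges, and has weighted length bounded by a constant depending only on $\phi^n$.

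For the second piece, since $p'$ is above $q$, they lie in the same tree component $S_0$ of $S$, and the unique embedded path from $p'$ to $q$ in $S_0$ uses only $r_i$--edges and seam edges, none of which are bottom edges. To bound the length of this path, I would bound the total number of edges in $S_0$. Every bottom leaf of $S_0$ lies below $p'$, so by Lemma \ref{boundedcancellation} these leaves are confined to a sub-segment of the bottom of $C$ of at most $K_0$ edges; hence there are at most $K_0+1$ of them, and correspondingly at most $K_0+1$ of the $r_i$--edges of $C$ belong to $S_0$. These $r_i$--edges are supported in a block of at most $K_0+2$ consecutive cells in the unfolded picture of $C$; each such cell has a uniformly bounded number of boundary edges, and any seam edge of $S_0$ is a folded top edge of one of these cells by connectedness of $S_0$. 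So $S_0$ has boundedly many edges, each of weighted length at most $1+\sqrt{2}$, which gives a uniform bound on the $p'$--to--$q$ path length.

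The main technical point will be verifying that the seams of $S_0$ are confined to the same contiguous block of cells as its $r_i$--edges. This rests on the structural fact that the foldings in a corridor form a non-crossing matching on top edges, so each connected component of $S$ occupies a balanced block in the unfolded corridor whose size is controlled by bounded cancellation. Adding the bounds from the two pieces yields the desired constant $K_1 = K_1(\phi^n)$.
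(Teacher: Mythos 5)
Your two-stage plan matches the paper's: go from $p$ to $p'$ along $\partial F$ (the common $2$--cell), then from $p'$ to $q$ inside $S_0$, with Lemma~\ref{boundedcancellation} controlling the bottom vertices. The count that at most $K_0+1$ of the $r_i$--edges of $C$ belong to $S_0$ is correct, since each bottom vertex of $S_0$ is the foot of a distinct $r_i$--edge and these span a bottom segment of at most $K_0$ edges. However, the step you label ``the main technical point'' is a genuine gap, not a detail. To bound the seam edges of $S_0$ you must rule out, for instance, a chain of seam identifications emanating from the apex $p'$ that threads laterally through many cells of the unfolded corridor before ever descending to the block carrying $S_0$'s $r_i$--edges; nothing in what you have written excludes this. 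Invoking ``the foldings form a non-crossing matching'' and ``balanced blocks'' names the right picture, but it is precisely the structural statement that requires proof, and as written it is an assertion rather than an argument.

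The paper's proof sidesteps this global question by never bounding all of $S_0$. Let $q'$ be the bottom vertex of $S_0$ lying in $\partial e^2$. The paths $[p,p']$ along the top and $[p',q']$ through $S_0$ are both contained in $\partial e^2$, hence together have length at most $L$, the larger of the two cell-boundary lengths. One then travels from $q'$ to $q$ one cell at a time: the bottom segment $[q',q]$ has at most $K_0$ edges by Lemma~\ref{boundedcancellation}, and any two adjacent bottom vertices in $S_0$ lie in the boundary of a common $2$--cell and hence are joined by a path of length at most $L$ in $S_0$ (this uses the fact that $S_0$ has a unique top vertex: if some top edge of that cell were unfolded, the two arcs of $S_0$ running up from the two bottom corners would terminate at distinct top vertices). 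Concatenating gives $K_1=(K_0+1)L$. Each hop is local to a single $2$--cell, so no control on the global spread of $S_0$'s seams is needed -- which is exactly the control your sketch leaves unproved.
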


\begin{proof}
Let $L$ be the maximum of the boundary lengths of the two $r_i$--corridor
cells. Let $p'$ be a vertex above $q$ such that $p$ and $p'$ are in the
boundary of a common $2$--cell $e^2$, and let $S_0$ be the component of
$S$ containing $p'$ and $q$. Let $q'$ be the unique bottom vertex of
$S_0$ which is in the boundary of $e^2$. There are paths $[p,p']$ in the
top and $[p',q']$ in $S_0$. These have total length at most $L$, since
they are in the boundary of $e^2$. 

Next, any two adjacent bottom vertices in $S_0$ are in the boundary of a
$2$--cell, and hence are joined by a path in $S_0$ of length
at most $L$. It follows that $q'$ and $q$ are joined by a path $[q', q]$
in $S_0$ of length at most $K_0 L$, with $K_0$ given by Lemma 
\ref{boundedcancellation}. Now the path $[p, p'] \cdot [p', q'] \cdot
[q',q]$ has length at most $K_1 = (K_0 + 1)L$. 
\end{proof}

\begin{lemma}\label{segmentsnearlyabove}
There is a constant $K_2 = K_2(\phi^n)$ such that if\/ $C$ is a folded
corridor and $[p_1,p_2]$, $[q_1,q_2]$ are sub-segments of the top and bottom,
respectively, with $p_j$ nearly above $q_j$ for $j = 1,2$, and $u(a_{\nu_i},
b_{\nu_i})$ is the word labeling $[p_1, p_2]$, then 
\[ \aabs{\phi^{-n}(u)}_{A_{\nu_i}} - \ K_2 \ \leq \ \abs{[q_1,q_2]} \ \leq \
\aabs{\phi^{-n}(u)}_{A_{\nu_i}} + \ K_2 .\] 
\end{lemma}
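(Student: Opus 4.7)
My approach is to compare $[q_1,q_2]$ with $[p_1,p_2]$ through the short ``side paths'' provided by Lemma~\ref{nearlyabove}, convert the resulting word equation in $S_{T,n}$ into a free-group equation in $A_{\nu_0}$ using the defining HNN relation $r_i^{-1} h\, r_i = \phi^{-n}(h)$ for $h \in A_{\nu_i}$, and conclude with the triangle inequality for weighted word length.

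First, for each $j = 1, 2$, choose a path $\alpha_j$ from $q_j$ to $p_j$ in the $1$--skeleton of $C$, of weighted length at most $K_1$ and containing no bottom edges (Lemma~\ref{nearlyabove}). Writing $v$ for the word in $a_{\nu_0}, b_{\nu_0}$ labelling $[q_1,q_2]$, the loop $\alpha_1 \cdot u \cdot \alpha_2^{-1} \cdot v^{-1}$ bounds a sub-disk of $C$, so $v = \alpha_1 \, u \, \alpha_2^{-1}$ in $S_{T,n}$. The central claim is that, as elements of $S_{T,n}$,
\[
\alpha_j \ = \ h_j \, r_i^{-1} \, t_j
\]
with $h_j \in A_{\nu_0}$, $t_j \in A_{\nu_i}$, and $\aabs{h_j}, \aabs{t_j}$ both bounded by a constant depending only on $\phi^n$. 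This exploits the explicit structure of $\alpha_j$ coming from the proof of Lemma~\ref{nearlyabove}: a bounded-length zig-zag through bottom vertices of a single seam, a single ascent to the top via one $r_i^{-1}$-edge, and a short top excursion. Each zig-zag step between consecutive bottom vertices of the seam lies inside a single $2$-cell and has the form $r_i^{-1} w \, r_i$ with $w \in A_{\nu_i}$ of uniformly bounded length; by the HNN relation it equals $\phi^{-n}(w) \in A_{\nu_0}$. Collecting the bounded number of such collapses, together with the ascent and the top excursion, produces the asserted form.

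Substituting the decompositions and applying the HNN relation once more yields
\[
v \ = \ h_1 \, \phi^{-n}(t_1)\, \phi^{-n}(u)\, \phi^{-n}(t_2)^{-1} \, h_2^{-1}
\]
as an identity inside the free group $A_{\nu_0}$. Since $\phi^{-n}$ sends bounded-length words to bounded-length words, each of $\aabs{h_j}$ and $\aabs{\phi^{-n}(t_j)}$ is bounded by a constant depending only on $\phi^n$; the triangle inequality for weighted length in the free group then gives $\bigl|\,\aabs{v} - \aabs{\phi^{-n}(u)}\,\bigr| \leq K_2$ for some $K_2 = K_2(\phi^n)$. Finally, $\abs{[q_1,q_2]} = \aabs{v}$ by the edge-length convention, and $\aabs{\phi^{-n}(u)}_{A_{\nu_i}} = \aabs{\phi^{-n}(u)}_{A_{\nu_0}}$ since weighted length in a two-generator free group is intrinsic to the underlying word.

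I expect the main obstacle to be the normal-form step. Lemma~\ref{nearlyabove} alone only provides that $\alpha_j$ has bounded \emph{length}, a priori permitting arbitrarily many interleaved $r_i^{\pm 1}$ letters; it is the specific seam-based structure produced in the proof of that lemma, together with the bounded-cancellation Lemma~\ref{boundedcancellation}, that makes it possible to collapse each zig-zag via the HNN relation into a single clean factor in each of $A_{\nu_0}$ and $A_{\nu_i}$ while keeping all pieces uniformly bounded.
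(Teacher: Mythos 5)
Your proof is correct and arrives at the same free-group identity in $A_{\nu_0}$ that the paper uses, but it travels a slightly different route to get there. The paper sidesteps the zig-zag issue entirely: it takes a \emph{shortest} path in $\widehat{S} := S \cup (\text{top of }C)$ from $q_j$ to $p_j$, and observes that since every bottom vertex is incident to exactly one edge of $\widehat{S}$ (its unique $r_i$-edge), a shortest path cannot return to the bottom after its first step; its label is therefore automatically of the clean form $r_i^{-1}\, v_j(a_{\nu_i},b_{\nu_i})$, with $\aabs{v_j} < K_1$ by Lemma~\ref{nearlyabove}. Substituting into $w = (r_i^{-1}v_1)\, u\, (r_i^{-1}v_2)^{-1}$ and applying the HNN relation once gives the paper's identity $w = \phi^{-n}(v_1)\,\phi^{-n}(u)\,\phi^{-n}(v_2)^{-1}$ in $A_{\nu_0}$, and the triangle inequality finishes. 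You instead take the \emph{constructed} path from the proof of Lemma~\ref{nearlyabove}, which genuinely zig-zags through intermediate bottom vertices of the seam component $S_0$, and collapse each hop $r_i^{-1} w_k\, r_i$ via the HNN relation into a bounded element of $A_{\nu_0}$. This works (the analysis of $S_0$ as a tree whose bottom vertices are leaves guarantees each hop has exactly the form $r_i^{-1} w_k\, r_i$, and bounded cancellation gives boundedly many hops), but it requires opening up the proof of Lemma~\ref{nearlyabove} rather than just citing its statement — a mild inelegance you rightly flag. The algebraic endgame, from bounded conjugating factors to the two-sided estimate on $\aabs{\cdot}_{A_{\nu_0}}$, is essentially identical in both.
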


The essential point is that the length of $[q_1, q_2]$ is
determined, up to an \emph{additive} error, by the word $u$ labeling
$[p_1, p_2]$. (It is certainly not determined by the length of $[p_1,
p_2]$ alone.) 

\begin{proof}
Let $w(a_{\nu_0}, b_{\nu_0})$ be the reduced word labeling $[q_1, q_2]$. 
Let $\widehat{S}$ be the union of $S$ and the top of $C$. For $j = 1, 2$
let $[q_j, p_j]$ be a shortest path in $\widehat{S}$ from $q_j$ to
$p_j$. Its first edge is an $r_i$--edge labeled $r_i^{-1}$, and so the
label on $[q_j, p_j]$ has the form $r_i^{-1} \cdot
v_j(a_{\nu_i},b_{\nu_i})$ for some reduced word $v_j$. This word has
weighted length less than $K_1$ by Lemma \ref{nearlyabove}. 

The four segments form a relation in $S_{T,n}$, namely 
\begin{align}
w(a_{\nu_0},b_{\nu_0}) \ &= \ r_i^{-1} v_1(a_{\nu_i},b_{\nu_i})
u(a_{\nu_i}, b_{\nu_i}) v_2(a_{\nu_i}, b_{\nu_i})^{-1} r_i \notag \\ 
&= \ \phi^{-n}(v_1)(a_{\nu_0},b_{\nu_0}) \, 
\phi^{-n}(u)(a_{\nu_0},b_{\nu_0}) \,
\phi^{-n}(v_2)(a_{\nu_0},b_{\nu_0}).  \label{bound1} 
\end{align}
This is an equality of elements of the free subgroup $A_{\nu_0}$. 
Now define
\[ K_2 \ = \ 2 \max \left\{ \aabs{\phi^{-n}(v)}_{A_{\nu_0}} \mid v \text{
    is a word in } a_{\nu_0}, b_{\nu_0} \text{ of weighted 
    length } < K_1 \right\}.\] 
Observe that the left hand side of \eqref{bound1} has reduced weighted length
$\abs{[q_1,q_2]}$ and the right hand side has reduced weighted length
within $K_2$ of $\aabs{\phi^{-n}(u)(a_{\nu_0}, b_{\nu_0})}_{A_{\nu_0}}$,
which is equal to $\aabs{\phi^{-n}(u)}_{A_{\nu_i}}$. 
\end{proof}

We turn now to the main result of this section, the bound on edge group
distortion. Recall from the Introduction that the proof is an inductive
proof based on Britton's Lemma. It falls into two cases, requiring very
different methods. 

In the first case, the proof is based on a method from \cite{BrBr}. It is
the balancing property of $V_T$ that allows us to carry out this
argument. The crucial moment occurs in \eqref{min-j} and the choice of
index $j'$, and the subsequent reasoning. 

The second case makes use of folded corridors and Lemma
\ref{segmentsnearlyabove}. The overvall induction argument is based on
the nested structure 
of $r_j$--corridors in a van Kampen diagram. If these corridors
are always oriented in the correct direction, then the argument based
on the balancing property would suffice. If there exists a
backwards-facing $r_j$--corridor, then the inductive process will
inevitably land in Case II. 

The backwards-facing corridor may then introduce geometric effects that
adversely affect the inductive calculation. When this occurs, we prove 
that there will be correctly oriented corridors just behind the first
one, and perfectly matching segments along these corridors, along which
any metric distortion introduced by the first corridor is exactly
undone. This occurs in \eqref{matchingsegments}, using Lemma
\ref{segmentsnearlyabove}. This argument also depends crucially on
$\sigma$--corridors. 

\begin{proposition}[Edge group distortion]\label{distortion} 
Given $T$ and $n$ there is a constant $K_3$ such that if $w$ is a word in
the standard generators of $S_{T,n}$ representing an element $g \in
A_{\nu_0}$ then  
\[\aabs{g}_{A_{\nu_0}} \ \leq \ K_3 \aabs{w}^{\alpha}\]
where $\alpha = n \log_m(\lambda)$. 
\end{proposition}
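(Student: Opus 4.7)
The plan is to induct on the number $|w|_r$ of stable-letter occurrences in $w$, following the framework used in \cite{BrBr}. In the base case $|w|_r = 0$, the word $w$ lies entirely in $V_T$ and represents $g \in A_{\nu_0}$, so the weighted form of Proposition \ref{balanced} (Remark \ref{weightedremark}) gives
\[
\aabs{g}_{A_{\nu_0}} \ \leq \ \aabs{w}_{\nu_i} + \aabs{w}_{\nu_0} + \aabs{w}_x + \aabs{w}_y
\]
for every $i$. Averaging over $i$ and using $\sum_i \aabs{w}_{\nu_i} \leq \aabs{w}$ produces a linear bound $\aabs{g}_{A_{\nu_0}} \leq C\aabs{w}$, which is absorbed into $K_3 \aabs{w}^{\alpha}$ since $\alpha \geq 1$.

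For the inductive step, I would fix a reduced van Kampen diagram $\Delta$ for $w$. Because $A_{\nu_0}$ is genuinely free, no $r_j$-corridor can be of annulus type, so every $r_j$-corridor is a folded band-type corridor joining two $r_j$-edges of $\partial\Delta$, and the plan is to examine the innermost such corridors in $\Delta$. Each comes in one of two orientations: \emph{forward-facing}, with the short ($A_{\nu_0}$) side exposed to the interior, and \emph{backward-facing}, with the long ($\phi^n(A_{\nu_j})$) side exposed. Deleting an innermost corridor removes two $r_j^{\pm 1}$ occurrences from the current boundary, and the induction hypothesis will be applied to the resulting sub-diagram.

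In Case I, where all the innermost corridors currently under consideration are forward-facing, the subword cut off by such a corridor is a word in $V_T$ representing an element of a peripheral subgroup, and the balancing property applied inside $V_T$---together with a careful choice of the index $j'$ minimizing the relevant contribution, as in the argument of \cite{BrBr}---gives an $m$-fold length improvement. Propagated through the recursion, this branching factor $m$ balanced against the per-level expansion $\lambda^n = m^{\alpha}$ produces exactly the exponent $\alpha = n\log_m \lambda$, and a straightforward accounting of the constants yields the asserted bound.

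The hard part will be Case II, where an innermost corridor is backward-facing. Naively, collapsing such a corridor requires estimating $\aabs{\phi^{-n}(u)}$, but $\phi^{-n}$ is not monotone and no analogue of Lemma \ref{minstretchmap} controls its effect on weighted length. To resolve this, I would invoke the folded-corridor machinery culminating in Lemma \ref{segmentsnearlyabove}: a backward-facing innermost corridor must be flanked by forward-facing corridors ``just behind'' it in $\Delta$, and along matching top and bottom sub-segments the Lemma produces a length of the form $\aabs{\phi^{-n}(u)}_{A_{\nu_j}} \pm K_2$, which absorbs the would-be $\phi^{-n}$-factor into a uniform additive constant. Combining this with the orientability of $\Ss_{\sigma}$ (Remark \ref{corridorremark}), which forces label-matched pairings of generators across $\sigma$-corridors and thus locates the matching segments precisely, should reduce Case II to a controlled perturbation of Case I and close the induction with a constant $K_3$ depending only on $T$, $n$, and the constants $K_0$, $K_1$, $K_2$ from the folded-corridor lemmas.
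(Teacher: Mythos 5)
Your proposal captures the right skeleton---induction on stable-letter count, base case from the weighted balancing property, a forward-facing case handled by the $j'$-minimization from \cite{BrBr}, and a backward-facing case where $\phi^{-n}$ must be controlled via folded corridors and Lemma \ref{segmentsnearlyabove}. But two of the places where you gesture at ``it should work out'' are exactly where the paper's proof has to do something you haven't reproduced.

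First, the case distinction in the paper is sharper than ``are all innermost corridors forward-facing?'' The paper's Case II is the specific situation where the syntactic decomposition of $w$ (guided by Britton's lemma and the tree of spaces) has \emph{exactly one} backward-facing piece $r_j^{-1}ur_j$ and every other piece is a word in $V_T$. The point is that in this situation the natural inductive call $\aabs{w_i}_{A_{\nu_0}} \leq K_3\aabs{w_i}^{\alpha}$ for that single piece would recurse on a word with the same stable-letter count as $w$, so the induction stalls. If there are \emph{two or more} backward-facing pieces, or any forward-facing piece, the paper places you in Case IB and the easy argument via Claims 1 and 2 goes through without any folded-corridor machinery. Your classification would misroute these easy subcases into the hard case.

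Second, and more seriously, your statement that ``a backward-facing innermost corridor must be flanked by forward-facing corridors `just behind' it in $\Delta$'' is not a fact the paper proves about arbitrary van Kampen diagrams for $w$, and I don't see how you would prove it. The paper never analyzes a reduced van Kampen diagram of the original word $w$. Instead, in Case II it writes $\hat{w}=r_{j'}^{-1}ur_{j'}$, decomposes $u$ itself (as $\hat w_1 \dotsm \hat w_{k'}$ with types (5)--(8)), fills the resulting relation in $V_T$ by a diagram over $X_T$, and then \emph{adjoins} the folded $r$-corridors $C_0, C_i$ by hand to produce a custom diagram $\Delta'$ with the desired geometry. The forward-facing corridors exist in $\Delta'$ by construction, not by a structural fact about arbitrary diagrams. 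It is then the $\sigma$-corridors in the inner $X_T$-diagram, with $\sigma$ chosen to run between $v_{\nu_{j'}}$ and $v_{\nu_{j''}}$, that match segments on the bottom of the backward corridor $C_0$ against segments on the bottoms of the forward corridors $C_{j_i}$, and Lemma \ref{segmentsnearlyabove} gives the additive error $2K_2$ in \eqref{matchingsegments}. Without this explicit construction your Case II has a gap: you have identified the correct tool (Lemma \ref{segmentsnearlyabove}) but not the object it is applied to, and it is precisely the construction of $\Delta'$ and the choice of $\sigma$ that make the tool usable.

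A smaller point: in Case II the remaining words $w_L, w_R$ outside $\hat w$ must still be fed into Proposition \ref{balanced} once more at the end to conclude; your outline does not mention this step.
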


\begin{proof}
Let $K_3 \ = \ \max\{1, 3K_2/2, \Lambda^{\! n}\}$
where $K_2$ is given by Lemma \ref{segmentsnearlyabove} and $\Lambda$ is
the maximum stretch factor for $\phi^{-1}$ with respect to the weighted
word length $\aabs{ \, \cdot \, }$. 

The universal cover $\widetilde{Y}_{T,n}$ is the total space of a tree of
spaces, with vertex spaces equal to copies of the universal cover
$\widetilde{X}_{T}$. Every $1$--cell of $\widetilde{Y}_{T,n}$ is either
contained in a vertex space, or is labeled $r_j^{\pm 1}$ for some $j$ and
has endpoints in two neighboring vertex spaces. 

We argue by induction on the number of occurrences of $r_j^{\pm 1}$
in $w$ (for all $j$). We may assume that $w$ is a shortest word in the
generators of $S_{T,n}$ representing $g$. This word
describes a labeled geodesic in the $1$--skeleton of 
$\widetilde{Y}_{T,n}$ with endpoints in the same vertex space. Using the
tree structure, one finds a decomposition of $w$ as $w_1 \dotsm w_k$ where
each $w_i$ satisfies one of the following:
\begin{enumerate}
\item\label{w1} $w_i = r_j u_i r_j^{-1}$ for some $j$ and $w_i$
  represents an element of $A_{\nu_j}$. Let $v_i(a_{\nu_j},b_{\nu_j})$ be
  the reduced word representing $w_i$ in this case. 
\item\label{w2} $w_i = r_j^{-1} u_i r_j$ for some $j$ and $w_i$
  represents an element of $A_{\nu_0}$. Let $v_i(a_{\nu_0},b_{\nu_0})$ be
  the reduced word representing $w_i$ in this case. 
\item\label{w3} $w_i$ is a word in the generators $a_{\nu_j}$,
  $b_{\nu_j}$ for some $j$. 
\item\label{w4} $w_i$ is a word in the generators $\{x_j, y_j\}$
  (allowing all $j$). 
\end{enumerate}
Let $v_i = w_i$ in cases \eqref{w3} and \eqref{w4}, and define $v =
v_1 \dotsm v_k$. 
The proof now splits into cases, based on the structure of the
decomposition $w = w_1 \dotsm w_k$. 

\begin{itemize}
\item {Case IA:} \ all subwords $w_i$ are of types \eqref{w3} or
  \eqref{w4}. This is simply the base case of the induction. 

\item {Case IB:} \ either there is a subword $w_i$ of type \eqref{w1}, or
  there are two or more subwords of type \eqref{w2}. 

\item {Case II:} \ exactly one subword $w_i$ is of type \eqref{w2}
  and all others are of types \eqref{w3} or \eqref{w4}. 
\end{itemize}

\noindent
\textit{Proof in Cases IA and IB.} \ 
Define the sets 
\begin{align*}
I_j \ &= \ \{\, i \mid v_i \text{ is a word in the generators } a_{\nu_j},
b_{\nu_j}\,\},\\
I_{xy} \ &= \ \{\, i \mid v_i \text{ is a word in the generators } \{x_j,
y_j\} \,\}.
\end{align*}
Note that $\aabs{v}_{\nu_j} = \sum_{i \in I_j} \aabs{v_i}$ 
and $\aabs{v}_x + \aabs{v}_y = \sum_{i \in I_{xy}} \aabs{v_i}$. 
We begin by establishing two claims. 

\medskip
\textit{Claim 1: if\/ $i \in I_j$ and $j \not= 0$ then $\aabs{v_i} \leq
  K_3(m\aabs{w_i})^{\alpha}$.} If $w_i$ satisfies \eqref{w3} then the claim
is trivial: $\aabs{v_i} = \aabs{w_i} \leq K_3(m\aabs{w_i})^{\alpha}$
since $K_3, m, \alpha \geq 1$. Otherwise, $w_i$ satisfies \eqref{w1} and
$w_i = r_j u_i r_j^{-1}$ where $u_i$ represents an element of $A_{\nu_0}$. 
Let $z_i$ be the reduced word in $a_{\nu_0}$, $b_{\nu_0}$ equal to $u_i$
in $A_{\nu_0}$, and note that $\phi^n(z_i)(a_{\nu_j}, b_{\nu_j}) = v_i$
in $A_{\nu_j}$. Since $v_i$ is reduced we have 
\[\aabs{v_i} \ \leq \ \aabs{\phi^n(z_i)(a_{\nu_j}, b_{\nu_j})} \ = \
\aabs{\phi^n(z_i)} \ \leq \
\lambda^n \aabs{z_i} \ = \ \lambda^n \aabs{u_i}_{A_{\nu_0}} \ = \
m^{\alpha} \aabs{u_i}_{A_{\nu_0}}.\]
Here, the second inequality follows from Lemma \ref{minstretchmap}. The
final quantity is at most 
\[K_3 m^{\alpha} (\aabs{w_i}-2)^{\alpha} \leq K_3
(m \aabs{w_i})^{\alpha}\]
by the induction hypothesis. 

\medskip
\textit{Claim 2: if\/ $i \in I_0 \cup I_{xy}$ then $\aabs{v_i} \leq
  K_3\aabs{w_i}^{\alpha}$.} As in Claim 1, if $w_i$ satisfies \eqref{w3} or
\eqref{w4} then Claim 2 is trivially true. The remaining case is when
$w_i$ satisfies \eqref{w2}. Now the claim is an instance of the induction
hypothesis: since $v_i$ is reduced we have $\aabs{v_i} =
\aabs{w_i}_{A_{\nu_0}} \leq K_3 \aabs{w_i}^{\alpha}$. Note that the
induction hypothesis applies precisely because we are not in Case II,
and $w_i$ contains fewer occurrences of the letters $r_j^{\pm 1}$ than
$w$. 

\medskip
Among the indices $1, \dotsc, m$, choose $j'$ to minimize the sum
$\sum_{i \in I_{j'}} \aabs{w_i}$. Thus we have 
\begin{equation}\label{min-j}
m \sum_{i \in I_{j'}} \aabs{w_i} \ \leq \ \sum_{i \in I_{1} \cup
  \dotsb \cup I_{m}} \aabs{w_i}. 
\end{equation}
Observe that $v = v_1 \dotsm v_k$ is a word in the standard generators of
$V_T$ representing the element $g$. Applying Proposition \ref{balanced}
(and Remark \ref{weightedremark}) to this word yields the inequality
\begin{align*}
\aabs{g}_{A_{\nu_0}}  &\leq \ \aabs{v}_{\nu_0} + \aabs{v}_x + \aabs{v}_y +
\aabs{v}_{\nu_{j'}} \\
&= \ \sum_{i \in I_0} \aabs{v_i} \ +  \sum_{i \in I_{xy}} \aabs{v_i} \ + 
\sum_{i \in I_{j'}} \aabs{v_i}.
\end{align*}
Then we have 
\begin{align*}
\aabs{g}_{A_{\nu_0}} \ &\leq \sum_{i \in I_0 \cup I_{xy}} K_3
\aabs{w_i}^{\alpha} \ + \sum_{i \in I_{j'}} K_3 \left( m
  \aabs{w_i}\right)^{\alpha} \\
& \leq \sum_{i \in I_0 \cup I_{xy}} K_3 \aabs{w_i}^{\alpha} \ + \ K_3
\left(\sum_{i \in I_{j'}} m\aabs{w_i}\right)^{\alpha} 
\end{align*}
by Claims 1 and 2, and 
\begin{align*}
\aabs{g}_{A_{\nu_0}} \ &\leq \sum_{i \in I_0 \cup I_{xy}} K_3
\aabs{w_i}^{\alpha} \ + \ K_3 \left( \sum_{i \in (I_1 \cup \dotsb \cup
    I_m)} \aabs{w_i} \right)^{\alpha} \\
&\leq \ K_3 \left( \sum_i \aabs{w_i}\right)^{\alpha} \\
&= \ K_3 \aabs{w}^{\alpha}
\end{align*}
By \eqref{min-j}. 

\medskip
\noindent
\textit{Proof in Case II.} \ 
In this case we write $w$ as $w_L \hat{w} w_R$ where $\hat{w}$ is
the subword of type \eqref{w2} and $w_L$, $w_R$ are products of
subwords of types \eqref{w3} and \eqref{w4}. Thus $\hat{w} =
r^{-1}_{{j'}} u r_{{j'}}$ for some index ${j'} \not=0$, where
$u$ represents an element of $A_{\nu_{{j'}}}$. Let $\hat{v}$ be
the reduced word in $a_{\nu_0}, b_{\nu_0}$ representing $\hat{w}$,
and let $\hat{u}$ be the reduced word in $a_{\nu_{{j'}}},
b_{\nu_{{j'}}}$ representing $u$. 

We proceed by decomposing the word $u$ using the tree of spaces
structure of $\widetilde{Y}_{T,n}$. First, choose an index ${j''}
\not= {j'}$ from the set $\{1, \dotsc, m\}$. Then write $u =
\hat{w}_1 \dotsm \hat{w}_{k'}$ where each $\hat{w}_i$
satisfies one of the following: 
\begin{enumerate}
\setcounter{enumi}{4}
\item\label{hw1} $\hat{w}_i = r_{{j'}} \hat{u}_i r_{{j'}}^{-1}$ or
  $\hat{w}_i = r_{{j''}} \hat{u}_i r_{{j''}}^{-1}$. 
  Let $\hat{v}_i$ be the reduced word in $a_{\nu_{j'}}, b_{\nu_{j'}}$ or
  $a_{\nu_{j''}}, b_{\nu_{j''}}$ representing $\hat{w}_i$. 
\item\label{hw2} $\hat{w}_i = r_j \hat{u}_i r_j^{-1}$ for some $j \not=
  {j'}, {j''}$. Let $\hat{v}_i(a_{\nu_{j}}, b_{\nu_{j}})$ be the reduced
  word representing $\hat{w}_i$. 
\item\label{hw3} $\hat{w}_i = r_j^{-1} \hat{u}_i r_j$ for some $j$. Let
  $\hat{v}_i(a_{\nu_{0}}, b_{\nu_{0}})$ be the reduced word representing
  $\hat{w}_i$. 
\item\label{hw4} $\hat{w}_i$ is a word in the standard generators of
  $V_T$. 
  Let $\hat{v}_i = \hat{w}_i$ in this case. 
\end{enumerate}
There is an equality $\hat{u}=\hat{v}_1 \dotsm \hat{v}_{k'}$ in
$V_T$. Let $\Delta$ be a van Kampen diagram over $X_T$ with boundary word
$\hat{v}_1 \dotsm \hat{v}_{k'} \hat{u}^{-1}$. The arcs in the boundary of
$\Delta$ labeled by the words $\hat{v}_i$ or $\hat{u}^{-1}$ are called the
\emph{sides} of $\Delta$. The side labeled by $\hat{v}_i$ is declared to
be of type \eqref{hw1}, \eqref{hw2}, \eqref{hw3}, or \eqref{hw4}
accordingly as $\hat{w}_i$ is of one of these types. The remaining side
will be regarded as being oriented in the opposite direction, so that its
label reads $\hat{u}$. 

We enlarge $\Delta$ to a
diagram $\Delta'$ by adjoining folded $r$--corridors, as follows. 
First, for each subword $\hat{w}_i$ of type \eqref{hw1}, build a folded
corridor $C_i$ whose top is labeled by $\hat{v}_i$ and whose bottom is
labeled by the reduced word in $a_{\nu_0}, b_{\nu_0}$ representing
$\hat{u}_i$ in $A_{\nu_0}$. Adjoin this corridor to $\Delta$ along the
side labeled by $\hat{v}_i$. Next, build a folded $r_{j'}$--corridor $C_0$
whose top is labeled by $\hat{u}$ and whose bottom is labeled by
$\hat{v}$. Adjoin it to $\Delta$ along the side labeled by
$\hat{u}$. The resulting diagram is $\Delta'$. 

Now let $\sigma \subset \widehat{T}$ be the maximal segment whose
endpoints correspond to the peripheral subgroups $A_{\nu_{j'}}$ and
$A_{\nu_{j''}}$. The corridor scheme $\Ss_{\sigma}$ defines
$\sigma$--corridors in $\Delta$. Every edge in the side labeled $\hat{u}$
has a $\sigma$--corridor emanating from it, landing on a side of type
\eqref{hw1} or \eqref{hw4}. (They cannot land on the other 
sides, because their edges are not members of $\Ss_{\sigma}$. They cannot
land on the side labeled $\hat{u}$, because $\hat{u}$ is reduced; cf. the
proof of Proposition \ref{balanced}.) Decompose $\hat{u}$ as $z_1 \dotsm
z_{\ell}$ such that 
\begin{itemize}
\item for each $z_i$, the $\sigma$--corridors emanating from the edges
  labeled by $z_i$ either land on a single side of type \eqref{hw1}, or
  on a union of sides of type \eqref{hw4} 
\item each $z_i$ is maximal with respect to the preceding property. 
\end{itemize}
Let $p_0, \dotsc, p_{\ell}$ be the vertices along the side labeled
$\hat{u}$ such that for each $i$, the arc labeled by $z_i$ has endpoints
$p_{i-1}$ and $p_i$. These vertices lie along the top of the folded 
$r_{j'}$--corridor $C_0$. Choose vertices $q_0, \dotsc, q_{\ell}$ along
the bottom of $C_0$ such that $p_i$ is nearly above $q_i$ for each $i$
and $q_0, q_{\ell}$ are the endpoints of the bottom. Let $[p_{i-1},p_i]$
and $[q_{i-1},q_i]$ denote the segments along the top and bottom,
respectively, with the indicated endpoints. 

Next define the index sets
\begin{align*}
I_{\eqref{hw1}} \ &= \ \{ i \mid \text{ the $\sigma$--corridors
  emanating from $z_i$ land on a side of type \eqref{hw1}} \},\\
I_{\eqref{hw4}} \ &= \ \{ i \mid \text{ the $\sigma$--corridors
  emanating from $z_i$ land on sides of type \eqref{hw4}} \}
\end{align*}
so that $I_{\eqref{hw1}} \cup I_{\eqref{hw4}} = \{ 1, \dotsc,
\ell\}$. For each $i \in I_{\eqref{hw1}}$ let $j_i$ be the index such
that the $\sigma$--corridors emanating from $z_i$ land on the side
labeled by $\hat{v}_{j_i}$. 

If $C$ and $C'$ are two $\sigma$--corridors emanating from $\hat{u}$ and
landing on the same side $\hat{v}_j$, then every corridor emanating from
$\hat{u}$ between $C$ and $C'$ must also land on $\hat{v}_j$, since
corridors do not cross. Moreover, if $C$ and $C'$ are adjacent in
$\hat{u}$ then they will land on adjacent edges of $\hat{v}_j$;
otherwise, any $\sigma$--corridor emanating from $\hat{v}_j$ between $C$
and $C'$ is forced to land on $\hat{v}_j$, contradicting that $\hat{v}_j$
is reduced. 

These remarks imply that for every $i \in I_{\eqref{hw1}}$, the corridors
emanating from $z_i$ land on a connected subarc $\alpha_i$ of the side
labeled $\hat{v}_{j_i}$, and in fact the label along $\alpha_i$ is the word
$z_i(a_{\nu_{j'}}, b_{\nu_{j'}})$ or $z_i(a_{\nu_{j''}}, b_{\nu_{j''}})$, by Remark
\ref{corridorremark}. Note that $\alpha_i$ lies along the top of the
$r$--corridor $C_{j_i}$. Let $\beta_i$ be a subarc of the bottom of
$C_{j_i}$ such that the endpoints of $\alpha_i$ are nearly above those of
$\beta_i$. Since $[p_{i-1},p_i]$ and $\alpha_i$ are labeled by the same
word, we have 
\begin{equation}\label{matchingsegments} 
\abs{ \abs{[q_{i-1},q_i]} - \abs{\beta_i}} \ \leq \ 2K_2 
\end{equation}
by Lemma \ref{segmentsnearlyabove}. Therefore, 
\begin{equation*}
\abs{[q_{i-1},q_i]} \ \leq \ \aabs{\hat{u}_{j_i}}_{A_{\nu_0}} + \ 2K_2.
\end{equation*}
Applying the induction hypothesis to $\hat{u}_{j_i}$ we obtain 
\begin{align}\label{I5term} 
\abs{[q_{i-1},q_i]} \ &\leq \ K_3 \aabs{\hat{u}_{j_i}}^{\alpha} \ + \ 2K_2
\notag \\
 &= \ K_3 (\aabs{\hat{w}_{j_i}} - 2)^{\alpha} \ + \ 2K_2. 
\end{align}

Next, if $i \in I_{\eqref{hw4}}$, we have
\begin{align}\label{I8term}
\abs{[q_{i-1}, q_i]} \ &\leq \ \aabs{\phi^{-n}(z_i)}_{A_{\nu_{j'}}} + \
K_2 \notag \\
&\leq \ \Lambda^{\! n} \aabs{z_i} \ + \ K_2 \notag \\
&\leq \ K_3 \aabs{z_i} \ + \ K_2 
\end{align}
by Lemma \ref{segmentsnearlyabove}. Now define the disjoint sets 
\begin{align*}
J_{\eqref{hw1}} \ &= \ \{ j_i \mid i \in I_{\eqref{hw1}} \}, \\
J_{\eqref{hw4}} \ &= \ \{ j \mid \hat{w}_{j} \text{ is of type }
\eqref{hw4} \}
\end{align*}
and note that $\abs{J_{\eqref{hw1}}} = \abs{I_{\eqref{hw1}}}$. Summing
the inequalities \eqref{I5term} and \eqref{I8term} over all $i \in
I_{\eqref{hw1}} \cup I_{\eqref{hw4}}$ we obtain
\[ \aabs{\hat{v}} \ = \ \sum_{i=1}^{\ell} \abs{q_{i-1}, q_i} \ \leq \
K_3\left(\sum_{i \in I_{\eqref{hw1}}} (\aabs{\hat{w}_{j_i}} -
    2)^{\alpha}\right) \ + \ K_3\left(\sum_{i \in I_{\eqref{hw4}}}
  \aabs{z_i} \right) \ + \ \left(2\abs{I_{\eqref{hw1}}} +
\abs{I_{\eqref{hw4}}}\right) K_2. \]
By considering $\sigma$--corridors we have $\sum_{i \in
  I_{\eqref{hw4}}} \aabs{z_i} \leq \sum_{j \in J_{\eqref{hw4}}}
\aabs{\hat{w}_j}$, and therefore 
\begin{equation}\label{firstmainsum}
\aabs{\hat{v}} \ \leq \ K_3\left(\sum_{j \in J_{\eqref{hw1}}}
  (\aabs{\hat{w}_{j}} - 2)^{\alpha}\right) \ + \ K_3\left(\sum_{j \in
    J_{\eqref{hw4}}} \aabs{\hat{w}_j} \right) \ + \
\left(2\abs{I_{\eqref{hw1}}} + \abs{I_{\eqref{hw4}}}\right) K_2.
\end{equation}
Now observe that no two adjacent indices can both be in
$I_{\eqref{hw4}}$, by the maximality property of the words $z_i$. It
follows that $\abs{I_{\eqref{hw4}}} \leq \abs{I_{\eqref{hw1}}} + 1$,
and since $K_3 \geq 3K_2 /2$, we have 
\begin{align*}
\left( 2 \abs{I_{\eqref{hw1}}} + \abs{I_{\eqref{hw4}}} \right)K_2 \ &\leq \
\left( 3 \abs{I_{\eqref{hw1}}} + 1 \right) K_2 \\
&\leq \ K_3 \left( 2 \abs{I_{\eqref{hw1}}} + 2 \right).
\end{align*}
Combining this with \eqref{firstmainsum} we obtain
\begin{align}\label{secondmainsum}
\aabs{\hat{v}} \ &\leq \ K_3\left(\, 2 \ + \sum_{j \in J_{\eqref{hw1}}}
  \left( (\aabs{\hat{w}_{j}} - 2)^{\alpha} + 2\right)\right) \ + \
K_3\left(\sum_{j \in J_{\eqref{hw4}}} \aabs{\hat{w}_j} \right) \notag \\
&\leq \ K_3 \left( \, 2^{\alpha} + \sum_{j \in J_{\eqref{hw1}}}
  \aabs{\hat{w}_j}^{\alpha} + \sum_{j \in J_{\eqref{hw4}}}
  \aabs{\hat{w}_j}^{\alpha}\right) \notag \\
&\leq \ K_3 \left( \, 2 + \sum_{j = 1}^{k'} \aabs{\hat{w}_j}
\right)^{\alpha} \ = \ \ K_3 \aabs{\hat{w}}^{\alpha}.
\end{align}
Finally, consider the words $w_L$ and $w_R$, and note that $v = w_L
\hat{v} w_R$. Choose any index $j \in \{1, \dotsc, m\}$ and apply
Proposition \ref{balanced}/Remark \ref{weightedremark} to obtain
\[ \aabs{g}_{A_{\nu_0}} \ \leq \ \aabs{v}_{\nu_j} + \aabs{v}_{\nu_0} +
\aabs{v}_x + \aabs{v}_y \ \leq \ \aabs{v} \ = \ \aabs{w_L} +
\aabs{\hat{v}} + \aabs{w_R}.\]
Combining this with \eqref{secondmainsum} yields 
\[ \aabs{g}_{A_{\nu_0}} \ \leq \ \aabs{w_L} + K_3 \aabs{\hat{w}}^{\alpha}
+ \aabs{w_R} \ \leq \ K_3 \aabs{w_L \hat{w} w_R}^{\alpha} \ = \ K_3
\aabs{w}^{\alpha}. \]
This completes the proof in Case II, and the proof of the proposition. 
\end{proof}

\begin{corollary}\label{distortioncor} 
Given $T$ and $n$ there is a constant $K_4$ such that if $w$ is a word in
the standard generators of $S_{T,n}$ representing an element $g \in
A_{\nu_0}$ then  
\[\abs{g}_{A_{\nu_0}} \ \leq \ K_4 \abs{w}^{\alpha}\]
where $\alpha = n \log_m(\lambda)$. 
\end{corollary}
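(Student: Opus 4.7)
The plan is to derive the corollary directly from Proposition \ref{distortion} by comparing the weighted and unweighted word-length functions. Both lengths are defined on the same standard generating set of $S_{T,n}$, the only difference being that the weighted length assigns weight $1+\sqrt{2}$ to each occurrence of $x_i^{\pm 1}$ and $a_i^{\pm 1}$, while every other generator (namely $y_i^{\pm 1}$, $b_i^{\pm 1}$, and $r_i^{\pm 1}$) carries weight $1$. In particular, each letter has weight in the interval $[1,\,1+\sqrt{2}]$.

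Concretely, I would argue as follows. For any word $w$ in the standard generators of $S_{T,n}$, summing the per-letter inequalities $1 \leq \text{weight} \leq 1+\sqrt{2}$ gives
\[
\abs{w} \ \leq \ \aabs{w} \ \leq \ (1+\sqrt{2})\,\abs{w}.
\]
Applying the same bounds to the reduced representative in $a_{\nu_0}, b_{\nu_0}$ of an element $g \in A_{\nu_0}$ yields
\[
\abs{g}_{A_{\nu_0}} \ \leq \ \aabs{g}_{A_{\nu_0}}.
\]
Now substitute into Proposition \ref{distortion}:
\[
\abs{g}_{A_{\nu_0}} \ \leq \ \aabs{g}_{A_{\nu_0}} \ \leq \ K_3\,\aabs{w}^{\alpha} \ \leq \ K_3\,(1+\sqrt{2})^{\alpha}\,\abs{w}^{\alpha}.
\]
Setting $K_4 = K_3\,(1+\sqrt{2})^{\alpha}$ gives the desired bound.

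There is no real obstacle here; the corollary is essentially a bookkeeping restatement. The only thing worth double-checking is that the weight bounds $1$ and $1+\sqrt{2}$ apply uniformly to \emph{all} standard generators of $S_{T,n}$ (including the stable letters $r_i^{\pm 1}$), which is exactly the convention fixed at the start of Section \ref{sec:distortion}. Since all relevant weights are bounded below by $1$ and above by $1+\sqrt{2}$, the equivalence between $\aabs{\,\cdot\,}$ and $\abs{\,\cdot\,}$ is bi-Lipschitz with explicit constants, and the polynomial estimate transfers across with only a constant factor change.
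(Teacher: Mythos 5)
Your proposal is correct and takes essentially the same approach as the paper, which simply enlarges $K_3$ to account for the bi-Lipschitz equivalence between $\aabs{\,\cdot\,}$ and $\abs{\,\cdot\,}$. You have merely made the scaling constants explicit.
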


\begin{proof}
One simply enlarges the constant $K_3$ to $K_4$, to account for the
maximum scaling factor between $\abs{ \, \cdot \,}$ and $\aabs{ \, \cdot
  \, }$.  
\end{proof}

\section{The Dehn function of $S_{T,n}$} \label{sec:dehn}

Before proceeding we need to establish some additional properties of
$\sigma$--corridors. First, we identify the subgroups of $V_T$ generated
by the labels along the sides of $\sigma$--corridors. Fixing $\sigma$,
this subgroup will be denoted $S_{\sigma}$ (the \emph{side
  subgroup}). Looking at the corridor cells in Figure \ref{fig:S-sigma},
it is clear that $S_{\sigma}$ is generated by the subgroups $F_j$ for
various $j$; namely, whenever $\sigma$ passes through a triangle in the
$(\abs{T}+2)$--gon with corners $i, j,k$, if $\sigma$ separates $i$ from
$j$ and $k$, then $F_j$ and $F_k$ are in $S_{\sigma}$. 

\begin{lemma}\label{sidelemma}
The side subgroup $S_{\sigma}$ is the free product of the factors $F_j
\times F_k$, with one factor for each triangle in the $(\abs{T}+2)$--gon
through which $\sigma$ passes. 
\end{lemma}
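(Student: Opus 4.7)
The plan is to proceed by induction on the number $k$ of triangles through which $\sigma$ passes, applying the normal form theorem for amalgamated free products. Let $v_1,\ldots,v_k$ denote the vertices of $T$ along the sub-path $P_\sigma$ induced by $\sigma$, $e_r$ the edge of $T$ between $v_r$ and $v_{r+1}$, and $H_r = F_{j_r} \times F_{k_r}$ the rank-$4$ subgroup of $V_{v_r}$ generated by the two non-apex corner factors. Write $V_r = V_{v_1} *_{A_{e_1}} \cdots *_{A_{e_{r-1}}} V_{v_r}$ for the iterated amalgamated product corresponding to the first $r$ vertices; since $T$ is a tree, $V_r$ embeds in $V_T$ by iterated injectivity of amalgamations, so it suffices to work inside $V_k$. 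Let $S^{(r)} := \langle H_1,\ldots,H_r\rangle \subset V_r$. I will prove inductively, for each $r$, that
(a) $S^{(r)} = H_1 * \cdots * H_r$, together with the auxiliary claim
(b) $S^{(r)} \cap A_{e_r} = \{1\}$ (when $r < k$).

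The base case $r = 1$ reduces to the antidiagonal fact described below. For the inductive step at level $r$, decompose $V_r = V_{r-1} *_{A_{e_{r-1}}} V_{v_r}$. The inductive hypothesis gives $S^{(r-1)} = H_1 * \cdots * H_{r-1}$ together with $S^{(r-1)} \cap A_{e_{r-1}} = \{1\}$. Combining this with the antidiagonal fact
\[ (\text{A}) \quad H_r \cap A_{e_{r-1}} = \{1\} \quad \text{in } V_{v_r}, \]
the amalgamated-product normal form theorem yields $S^{(r)} = S^{(r-1)} * H_r = H_1 * \cdots * H_r$, establishing (a) at level $r$.

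To carry (b) forward, consider $g \in S^{(r)} \cap A_{e_r}$; since $A_{e_r}$ is identified with a peripheral subgroup of $V_{v_r}$, $g$ lies in $V_{v_r}$. Take the reduced free-product expression $g = h_{s_1} \cdots h_{s_n}$ with $h_{s_i} \in H_{s_i}$ and $s_i \neq s_{i+1}$, and regroup consecutive factors with $s_i < r$ (which live in $V_{r-1}$) into single blocks, leaving the $s_i = r$ factors as singleton blocks in $V_{v_r}$. This produces an amalgamated normal form of $g$ in $V_r = V_{r-1} *_{A_{e_{r-1}}} V_{v_r}$. Since $g \in V_{v_r}$, this normal form must have length one; the inductive auxiliary claim $S^{(r-1)} \cap A_{e_{r-1}} = \{1\}$ rules out any nontrivial $V_{r-1}$-block from collapsing into $A_{e_{r-1}}$, so the expression must consist of a single $H_r$-factor. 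Hence $g \in H_r \cap A_{e_r}$, which is trivial by a second instance of the antidiagonal argument applied at the ``exit side'' $A_{e_r}$ of the triangle at $v_r$.

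The antidiagonal argument underlying (A) is as follows. The peripheral subgroup $A_{e_{r-1}}$ identifies with some $A_\ell \subset V_{v_r}$, which sits as the $\tau$-antidiagonal in $F_\ell \times F_{\ell+1}$. Because $\sigma$ crosses the $A_\ell$ side at $v_r$, the apex $i_r$ is the common corner of that side with the other side crossed by $\sigma$, so $i_r \in \{\ell,\ell+1\}$; the remaining index labels one of the non-apex factors, which lies in $H_r$. Since $A_\ell$ meets each coordinate factor of $F_\ell \times F_{\ell+1}$ trivially, $A_\ell \cap H_r = \{1\}$, and the same reasoning gives $H_r \cap A_{e_r} = \{1\}$. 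The main obstacle will be organizing the joint induction cleanly, specifically managing the interplay between the amalgamated-product normal form of $V_r$ and the free-product reduced form of $S^{(r)}$ so that an element of a peripheral subgroup can be forced into a single $H_r$-factor; the auxiliary claim (b) is precisely the additional inductive data that makes this reduction go through.
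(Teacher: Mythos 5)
Your proof is correct, but the paper takes a considerably shorter route. Rather than running a normal-form induction from scratch, the paper observes that the claimed free product is the fundamental group of a graph of groups whose underlying graph is the sub-path of $T$ traversed by $\sigma$, with vertex groups $F_j \times F_k$ and \emph{trivial} edge groups, and that the inclusions $F_j \times F_k \hookrightarrow F_i \times F_j \times F_k$ define a morphism into the graph-of-groups structure of $V_T$. Lemma~\ref{basslemma} (Bass's injectivity criterion, already set up and used repeatedly in the paper) then gives injectivity once one checks the intersection hypothesis, which is exactly your ``antidiagonal fact'': in each vertex group $F_i \times F_j \times F_k$, the subgroup $F_j \times F_k$ meets both incident edge groups $A_i \subset F_i \times F_j$ and $A_k \subset F_k \times F_i$ trivially. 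Your induction on the number of triangles, carrying the auxiliary claim $S^{(r)} \cap A_{e_r} = \{1\}$ alongside the free-product decomposition of $S^{(r)}$, is essentially a self-contained reproof of the relevant special case of Bass's criterion via the amalgamated-product normal form theorem: it is correct and the bookkeeping you identify as the ``main obstacle'' does go through, but it re-derives machinery the paper already has packaged. The one observation common to both arguments --- and the genuine content of the lemma --- is the triviality of those antidiagonal intersections.
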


\begin{proof}
First, there is a surjective homomorphism from the free product onto
$S_{\sigma}$, induced by inclusion of the subgroups $F_j \times
F_k$. Injectivity then follows from Lemma \ref{basslemma}, once we
observe that in each vertex group $F_i \times F_j \times F_k$ of $V_T$,
the subgroup $F_j \times F_k$ has trivial intersection with the edge
groups $A_i \subset F_i \times F_j$ and $A_k \subset F_k \times F_i$. 
\end{proof}

Next we need some observations about corridors in diagrams over the
subgroup $F_0 \times F_1 \subset V$. Let $X_{01} \subset X$ be the
subcomplex whose $2$--cells have boundary edges labeled by the elements
$x_0, y_0, x_1, y_1, a_0, b_0$. It has two quadrilateral cells and four
triangular cells (see Figure \ref{fig:S-sigma}). Its fundamental group is
$F_0 \times F_1$. Define two orientable corridor schemes $\Ss_0, \Ss_1$
over $X_{01}$ as follows: $\Ss_0$ contains the edges labeled $x_0, y_0,
a_0, b_0$ and $\Ss_1$ contains the edges labeled $x_1, y_1, a_0,
b_0$. Note that the \emph{side subgroup} associated with $\Ss_0$ is
$F_1$, and the side subgroup of $\Ss_1$ is $F_0$. 

\begin{remarks}\label{sideremark} 
Let $\Delta$ be a reduced diagram over $X_{01}$. 

(1) Let $\alpha$ be a path in the $1$--skeleton of $\Delta$ along the
side of an $\Ss_0$--corridor, and let $\beta$ be a path along the side of
an $\Ss_1$--corridor. Then $\alpha$ and $\beta$ intersect in at most one
point. If there were two points in the intersection, then the labels
along $\alpha$ and $\beta$ between these points give non-trivial elements
of $F_1$ and $F_0$ respectively, representing the same element of $F_0
\times F_1$. 

(2) Every $2$--cell of $\Delta$ is contained in both an $\Ss_0$-- and an 
$\Ss_1$--corridor. This is immediate by examining the corridor cells for
$\Ss_0$ and $\Ss_1$. 

(3) There are no $\Ss_0$-- or $\Ss_1$--corridors of annulus type (because
the side subgroups are free and $\Delta$ is reduced). 
\end{remarks}

\begin{proposition}[Area in $V_T$]\label{areainVT}
Given $T$ there is a constant $K_5$ with the following property. 
Suppose $w$ and $z(a_{\nu_{\ell}}, b_{\nu_{\ell}})$ represent the same element of
$A_{\nu_{\ell}} \subset V_T$, where $z$ is reduced and $w$ decomposes as $w_1
\dotsm w_k$ where each $w_i$ is a reduced word in the generators $x_j,
y_j$ for some $j$, or the peripheral generators $a_{\nu_j}, b_{\nu_j}$
for some $j$. Let $\Is$ be the set of pairs of indices $(i,j)$ such that
either $i\not= j$ or $i=j$ and $w_i$ is not a word in peripheral
generators of $V_T$. Then 
\[\area(wz^{-1}) \ \leq \ K_5 \sum_{(i,j) \in \Is} \abs{w_i}\abs{w_j}. \]
\end{proposition}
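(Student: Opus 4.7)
My plan is to prove the bound by a combination of $\sigma$-corridor analysis and induction on $\abs{T}$, with the base case relying on the direct product structure of the vertex group $V$.

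For the base case $\abs{T}=1$, where $V_T = V = F_0 \times F_1 \times F_2$, I would analyze a reduced minimum-area diagram $\Delta$ for $wz^{-1}$. Every $2$-cell of $\Delta$ lies in a unique subcomplex analogous to the $X_{01}$ of the preceding discussion, having fundamental group $F_j \times F_k$ for some pair of indices. Within each such subcomplex, the orientable corridor schemes $\Ss_j, \Ss_k$ cover every $2$-cell (Remark \ref{sideremark}(2)), and any two such corridors meet in at most one $2$-cell (Remark \ref{sideremark}(1)). Hence the area of the $F_j \times F_k$ part of $\Delta$ is at most the product of the numbers of $\Ss_j$- and $\Ss_k$-boundary edges. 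Tracking which subwords $w_i$ contribute boundary edges to which corridor schemes then yields a bound of the form $K_5 \sum_{(i,j) \in \Is} \abs{w_i}\abs{w_j}$. The peripheral self-terms $\abs{w_i}^2$ are absent because a peripheral subword $w_i \in A_{\nu_p}$ contributes boundary edges only to the pair of corridor schemes associated with its peripheral subgroup, and by orientability (Remark \ref{corridorremark}) each of its edges must pair with an edge of some \emph{other} subword rather than with itself.

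For the inductive step $\abs{T} > 1$, I would choose a leaf $v$ of $T$ whose attaching edge $e$ amalgamates peripheral subgroups $A_p$ and $A_q$ in the two neighboring vertex groups, so that $V_T = V_{T'} *_{A_p} V$ with $T' = T \setminus \{v\}$. Cut $\Delta$ along the preimage of the peripheral subspace $Y_p$ in $\Delta$, separating it into a subdiagram $\Delta_v$ over the copy of $X$ at $v$ and a subdiagram $\Delta_{T'}$ over $X_{T'}$, joined along reduced peripheral words in $a_p, b_p$. Apply the base case to $\Delta_v$ and the inductive hypothesis to $\Delta_{T'}$, and sum the area bounds.

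The main obstacle will be controlling the total length of the cut words so that the cross terms in the recursion absorb into the stated bound. For this I would invoke the balancing property (Proposition \ref{balanced}): each cut word is a reduced element of $A_p$ equal in $V$ to the product of subwords on one side of the cut, so its length is bounded by the sum of the lengths of those subwords. The cross-contributions $\abs{y}\abs{w_j}$ then fit into $K_5 \sum_{(i,j) \in \Is} \abs{w_i}\abs{w_j}$ at the cost of enlarging $K_5$. A further technical point to verify is that no new self-terms $\abs{w_i}^2$ for peripheral $w_i$ are introduced during the cut-and-recurse procedure, which should hold because each such $w_i$ lies entirely on one side of the cut and thus retains its peripheral status in the smaller decomposition.
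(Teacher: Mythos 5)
Your approach is genuinely different from the paper's, and it has real gaps. The paper does not induct on $\abs{T}$. Instead it first invokes the fact (Remark \ref{VTisCAT0}) that $V_T$ is CAT(0), hence has a quadratic Dehn function $\area(wz^{-1}) \leq 4C\abs{w}^2$ for all $T$ at once, and then reduces the proposition to showing this a priori quadratic bound is dominated by $\sum_{\Is}\abs{w_i}\abs{w_j}$. In the ``generic'' case (every peripheral $w_i$ has $\abs{w_i}\leq \tfrac{1}{2}\abs{w}$, which follows from the balancing property) the reduction is immediate. The remaining ``special'' case—a single $w_{i'}$ lying in the \emph{same} peripheral subgroup $A_{\nu_\ell}$ as $z$ with $\abs{w_{i'}}>\tfrac{1}{2}\abs{w}$—is the crux and consumes most of the proof; it requires cutting the diagram into four pieces using $\sigma$-, $\sigma'$- and $\sigma''$-corridors and a separate corridor analysis over $X_{01}$.

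The concrete gaps in your argument are as follows. \emph{Base case.} Your bound $\area(\Delta^{(jk)}) \lesssim (\#\Ss_j\text{-edges on }\partial\Delta^{(jk)})\cdot(\#\Ss_k\text{-edges on }\partial\Delta^{(jk)})$ counts corridor bands, but $\partial\Delta^{(jk)}$ includes \emph{interior} edges of $\Delta$ (labeled $x_j, y_j$) that separate $\Delta^{(jk)}$ from the other two pieces, and you have no a priori control over how many there are. Controlling these lengths is essentially the same difficulty that the paper's special case is designed to handle, and you never address it. Moreover, two corridors from $w_{i'}$ in \emph{different} schemes $\Ss_\sigma$, $\Ss_{\sigma'}$ can cross each other, so crossings can occur between bands both of which emanate from the same peripheral subword $w_{i'}$; your orientability argument rules out a band joining two edges of $w_{i'}$, but not two crossing bands each starting there, so it does not by itself eliminate $\abs{w_{i'}}^2$-sized contributions. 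This is precisely the situation in the paper's special case. \emph{Inductive step.} Cutting $\Delta$ along the preimage of $Y_p$ can produce several cut arcs, each carrying a possibly long reduced word in $a_p, b_p$. The statement you are trying to induct on has a single distinguished reduced word $z$ excluded from $\Is$; if more than one cut word is long, you cannot place them all in the $z$-slot, so each extra cut word becomes a new peripheral $w_i$ that would contribute an $\abs{w_i}^2$ term you have no right to—your final claim that ``no new self-terms are introduced'' is not justified. Your appeal to the balancing property gives a bound on the \emph{length} of a cut word, but that does not prevent it from entering the sum as a self-term on the smaller side of the cut.
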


\begin{remark}\label{arearemark}
Let $\Ps$ be the set of indices $i$ such that $w_i$ is a word in
peripheral generators of $V_T$. Then 
\[ \sum_{(i,j) \in \Is} \abs{w_i}\abs{w_j} \ + \ \sum_{i \in \Ps}
\abs{w_i}^2 \ = \ \abs{w}^2.\] 
\end{remark}

\begin{proof}[Proof of Proposition \ref{areainVT}] 
Recall from Remark \ref{VTisCAT0} that $V_T$ is CAT(0), and therefore its
Dehn function is quadratic. Thus there is a constant $C \geq 1$
(independent of $z, w$) such that $\area(wz^{-1}) \leq C (\abs{w} +
\abs{z})^2$. Also, $\abs{z} \leq \abs{w}$ by Proposition \ref{balanced},
so $\area(wz^{-1}) \leq 4C \abs{w}^2$. 

The proof now falls into two cases: the generic case and a special case
which is somewhat more difficult. The latter case is when there is an
index $i'$ such that $w_{i'}$ is a word in the generators
$a_{\nu_{\ell}}, b_{\nu_{\ell}}$ (the same as $z$) and $\abs{w_{i'}} >
(1/2) \abs{w}$. 

\medskip

Consider first the generic case (that is, whenever $w_i$ is a word in the
generators $a_{\nu_{\ell}}, b_{\nu_{\ell}}$ we have $\abs{w_i}
\leq (1/2) \abs{w}$). We claim that $\abs{w_i} \leq (1/2) 
\abs{w}$ for \emph{every} $i \in \Ps$. To see this, suppose $w_i$ is a
word in $a_{\nu_j}, b_{\nu_j}$ with $j \not= \ell$ and let $\nu_{j'}$ be
a peripheral index not equal to $\nu_j$ or $\nu_{\ell}$. Let $\hat{w}$ be
the complement of $w_i$ in the cyclic word $wz^{-1}$. Apply Proposition
\ref{balanced} to obtain 
\[ \abs{w_i} \ \leq \ \abs{\hat{w}}_{\nu_{j'}} + \abs{\hat{w}}_{\nu_j} + 
\abs{\hat{w}}_x + \abs{\hat{w}}_y.\]
The right hand side counts no letters of $z$ because $j, j' \not=
\ell$. Thus, $\abs{w_i} \leq (1/2) \abs{w}$. 

This claim implies (see Figure \ref{fig:boxes}) that 
\begin{equation}\label{boxes} 
\sum_{i \in \Ps} \abs{w_i}^2 \ \leq \ \sum_{i\not= j}
\abs{w_i}\abs{w_j}.
\end{equation}
\begin{figure}[ht]
\labellist
\hair 2pt
\small
\pinlabel* {$\abs{w_i}$} at 7.5 41.5
\endlabellist
\includegraphics[width=1.5in]{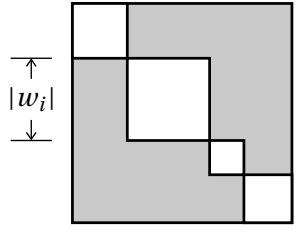}
\caption{The area of the squares does not exceed the shaded area, if the
  side lengths are each at most half the total side
  length.}\label{fig:boxes} 
\end{figure}

From Remark \ref{arearemark} we deduce that 
\[\sum_{i\in\Ps} \abs{w_i}^2 \ \leq \ (1/2) \abs{w}^2\]
and therefore
\[ \sum_{(i,j) \in \Is} \abs{w_i}\abs{w_j} \ \geq \ (1/2)
\abs{w}^2.\] 
Finally, we have 
\[\area(wz^{-1}) \ \leq \ 4C\abs{w}^2 \ \leq \ 8C \sum_{(i,j) \in \Is}
\abs{w_i}\abs{w_j}\]
so we are done by taking $K_5 \geq 8C$. 

\medskip

Next consider the special case: $w_{i'}$ is a word in $a_{\nu_{\ell}},
b_{\nu_{\ell}}$ and $\abs{w_{i'}} > (1/2) \abs{w}$. Write $w = w_L w_{i'}
w_R$, so that $\abs{w_L} + \abs{w_R} < \abs{w_{i'}}$. Note that it will
suffice for us to prove that $\area(wz^{-1}) \ \leq \
K_5 \abs{w_{i'}}(\abs{w_L} + \abs{w_R})$. 

Let $\sigma$ and $\sigma'$ be maximal segments in $\widehat{T}$ with
common endpoint $v_{\nu_{\ell}}$, which diverge immediately. That is,
their intersection consists of the single edge from the leaf
$v_{\nu_{\ell}}$ to its parent vertex in $T$. This edge lies inside a
triangle in the $(\abs{T}+2)$--gon. Suppose without loss of generality
that this triangle has index triple $(0, 1, 2)$ and that $A_{\nu_{\ell}}
= A_0 \subset F_0 \times F_1$. The vertex group in $V_T$ corresponding to
this triangle is $F_0 \times F_1 \times F_2$; denote this subgroup by
$V$. For concreteness, suppose that $\sigma$ separates corner $0$ from
corners $1$ and $2$, and $\sigma'$ separates corner $1$ from corners $2$
and $0$. 

We can express the $(\abs{T}+2)$--gon as a union of two smaller
sub-diagrams whose intersection is the $(0,1,2)$ triangle. Note that
$\sigma$ and $\sigma'$ each lie wholly inside one of these
sub-diagrams. Thus $V_T$ has an expression as $A \ast_V B$ where $A$ is
the fundamental group of the sub-diagram containing $\sigma$ and $B$ is
the fundamental group of the sub-diagram containing $\sigma'$. See Figure
\ref{fig:AunionB}. 
\begin{figure}[ht]
\labellist
\hair 2pt
\large
\pinlabel* {$=$} at 95 32
\pinlabel* {$\cup$} at 178 32

\normalsize
\pinlabel {${\sigma}$} [Br] at 6 18.5
\pinlabel {${\sigma}$} [tr] at 46 3.3
\pinlabel {${\sigma'}$} [tl] at 49 6
\pinlabel {${\sigma'}$} [Bl] at 74 15

\pinlabel {${\sigma}$} [Br] at 112.5 20
\pinlabel {${\sigma}$} [t] at 154 3.5
\pinlabel {${\sigma'}$} [t] at 205 6
\pinlabel {${\sigma'}$} [l] at 230 19

\small
\pinlabel* {$\textcolor{blue}{0}$} at 40.5 12
\pinlabel* {$\textcolor{blue}{1}$} at 55.5 12
\pinlabel* {$\textcolor{blue}{2}$} at 48 24

\pinlabel* {$\textcolor{blue}{0}$} at 147 12
\pinlabel* {$\textcolor{blue}{1}$} at 160 12
\pinlabel* {$\textcolor{blue}{2}$} at 154 24

\pinlabel* {$\textcolor{blue}{0}$} at 197 12
\pinlabel* {$\textcolor{blue}{1}$} at 209.5 12
\pinlabel* {$\textcolor{blue}{2}$} at 203.5 24

\endlabellist
\includegraphics[width=4.4in]{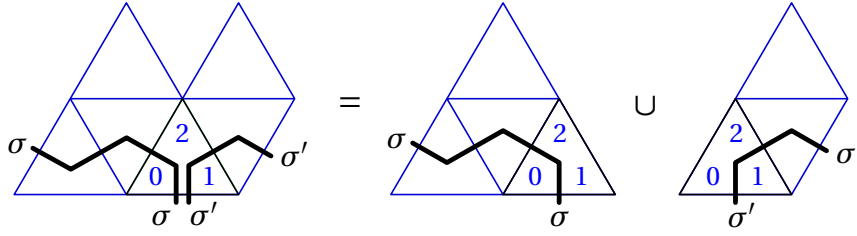}
\caption{Expressing $V_T$ as $A \ast_V B$ with $S_{\sigma} \subset A$ and 
  $S_{\sigma'} \subset B$.}\label{fig:AunionB} 
\end{figure}
In particular,
$S_{\sigma} \subset A$, $S_{\sigma'} \subset B$, and $A \cap B = V$. 

Let $\Delta$ be a reduced van Kampen diagram over $X_T$ with boundary
$wz^{-1}$. Think of the boundary as being two arcs with the same
endpoints, labeled by $w$ and $z$ respectively. Every edge along $w_{i'}$
has a $\sigma$--corridor emanating from it. Since $w_{i'}$ and $z$ are
reduced words in $a_{\nu_{\ell}}, b_{\nu_{\ell}}$, we can argue (as
usual) that no $\sigma$--corridor has both ends on $w_{i'}$ or on $z$, 
and that the $\sigma$--corridors emanating from $w_{i'}$ and landing on
$z$ all land on a connected subarc of the arc labeled $z$. The
$\sigma$--corridors not landing on $z$ must land on $w_L$ or $w_R$, and
one finds that these comprise at most $1/3$ of the corridors emanating
from $w_{i'}$ (because $\abs{w_{i'}} > \abs{w_L} + \abs{w_R}$). 

Let $p_1, p_2$ be the initial and final endpoints of the maximal segment
along $w_{i'}$ whose $\sigma$--corridors land on $z$. Let $q_1, q_2$ be
the analogous points along $z$, so that $p_i$ is joined to $q_i$ by the
side of a $\sigma$--corridor ($i=1,2$). Note that the subsegments $[p_1,
p_2]$ and $[q_1, q_2]$ are labeled by the same word in $a_{\nu_{\ell}},
b_{\nu_{\ell}}$, by Remark \ref{corridorremark}. 

In a similar fashion, there are $\sigma'$--corridors emanating from
$w_{i'}$, at least 2/3 of which land on a connected subsegment of
$z$. Define $p_1'$, $p_2'$, $q_1'$, $q_2'$ analogously to $p_1$, $p_2$,
$q_1$, and $q_2$. 

Now let $[p_i,q_i]$ denote the side of the $\sigma$--corridor 
joining $p_i$ to $q_i$ ($i = 1,2$). Define $[p_i', q_i']$
analogously. Note that the label along $[p_1,q_1]$ represents an element
of $S_{\sigma} \subset A$. However, $p_1$ and $q_1$ are also joined by
the path $[p_1, p_1'] \cdot [p_1', q_1'] \cdot [q_1', q_1]$ where the
first and third segments run along $w_{i'}$ and $z$ respectively. This
path represents an element of $B$. Therefore, $[p_1,q_1]$ represents an
element of $A \cap B = V$. By Lemma \ref{sidelemma} we have $S_{\sigma}
\cap V = F_1 \times F_2$, and so $[p_1,q_1]$ in fact represents an
element of this latter subgroup. By the same argument, $[p_2, q_2]$ also
represents an element of $F_1 \times F_2$. 

We need to introduce a little more notation. Let $\alpha$ be the path
along the boundary of $\Delta$ from $q_1$ to $p_1$ which contains the
segment labeled $w_L$. Similarly, let $\beta$ be the path in the
boundary from $p_2$ to $q_2$ which contains $w_R$. Let $p_0, p_3$ be the
initial and final endpoints of the segment labeled $w_{i'}$. Let $q_0,
q_3$ be the endpoints of the segment labeled $z$. Note that
$\abs{[p_0,p_1]} \leq \abs{w_L}$ since the $\sigma$--corridors from
$[p_0, p_1]$ land on $w_L$. Similarly, $\abs{[p_2, p_3]} \leq
\abs{w_R}$. Because $\abs{z} \leq \abs{w}$, we have
\begin{align*}
\abs{[q_0, q_1]} + \abs{[q_2, q_3]} \ &\leq \ \abs{w_L} + \abs{[p_0,
  p_1]} + \abs{[p_2, p_3]} + \abs{w_R} \notag \\
&\leq \ 2(\abs{w_L} + \abs{w_R}) 
\end{align*}
and therefore 
\begin{equation}\label{alphabeta} 
\abs{\alpha} + \abs{\beta} \ \leq \ 4(\abs{w_L} + \abs{w_R}).
\end{equation}

At this point we discard the diagram $\Delta$ and start over with its
boundary loop. We have seen that $\alpha$ and $\beta$ represent 
elements $g_{\alpha}$ and $g_{\beta}$ of the subgroup $F_1 \times F_2
\subset V$. Attach a segment $[q_1, p_1]$ to the points $q_1$ and $p_1$,
and label it by a reduced word representing $g_{\alpha}$, of the
form $u(x_1,y_1)v(x_2,y_2)$. Similarly, attach a segment $[q_2, p_2]$
labeled by a reduced word $u'(x_1, y_1) v'(x_2, y_2)$ representing
$g_{\beta}^{-1}$ to the points $q_2$ and $p_2$. Note that the loop $[p_1,
p_2] \cdot [p_2, q_2] \cdot [q_2, q_1] \cdot [q_1, p_1]$ is labeled by
generators of $V$, and the only occurrences of generators of $F_2$ are
in the reduced words $v$ and $v'$. It follows that $v = v'$. 

Now define $o_1$ to be the point along $[q_1, p_1]$ such that $[q_1,
o_1]$ is labeled by $u(x_1, y_1)$ and $[o_1, p_1]$ is labeled by $v(x_2,
y_2)$. Similarly let $o_2$ be the point on $[q_2, p_2]$ such that $[q_2,
o_2]$ is labeled by $u'(x_1, y_1)$ and $[o_2, p_2]$ is labeled by $v(x_2,
y_2)$. Attach one more segment $[o_1, o_2]$ to the points $o_1$ and
$o_2$, labeled by the same word in $a_0, b_0$ ($= a_{\nu_{\ell}},
b_{\nu_{\ell}}$) as $[p_1,p_2]$ and $[q_1, q_2]$. 

We proceed now to fill the original boundary loop with a van Kampen
diagram, in four parts. Fill the loop $\alpha \cdot [p_1, q_1]$ with a
least-area van Kampen diagram $\Delta_{\alpha}$ over $X_T$. Fill $\beta
\cdot [q_2, p_2]$ with a least-area diagram $\Delta_{\beta}$ over $X_T$. 
To estimate the areas of $\Delta_{\alpha}$ and $\Delta_{\beta}$, choose
a segment $\sigma'' \subset \widehat{T}$ which passes through the
$(0,1,2)$ triangle and separates corner $2$ from corners $0$ and
$1$. Note that $x_1, y_1 \in \Ss_{\sigma'} - \Ss_{\sigma''}$ and $x_2,
y_2 \in \Ss_{\sigma''} - \Ss_{\sigma'}$. Every edge along $[p_1,q_1]$ has
either a $\sigma'$--corridor or a $\sigma''$--corridor in
$\Delta_{\alpha}$ emanating from it, and not both. Since the words
$u(x_1, y_1)$, $v(x_2, y_2)$ are reduced, these corridors can only land
on $\alpha$. It follows that 
\begin{equation}\label{alpha}
\abs{[p_1, q_1]} \ \leq \ 2\abs{\alpha}. 
\end{equation}
By a similar argument using $\Delta_{\beta}$, we have
\begin{equation}\label{beta}
\abs{[p_2, q_2]} \ \leq \ 2\abs{\beta}. 
\end{equation} 
From \eqref{alpha} and \eqref{beta} we deduce that
\begin{equation*}
\area(\Delta_{\alpha}) \ \leq \ C (3\abs{\alpha})^2 \ \text{ and } \
\area(\Delta_{\beta}) \ \leq \ C (3 \abs{\beta})^2. 
\end{equation*}

Two more loops remain to be filled. Let $\hat{w}(a_0, b_0)$ be the word
labeling $[p_1, p_2]$ and $[o_1, o_2]$. The loop $[p_1, p_2] \cdot [p_2,
o_2] \cdot [o_2, o_1] \cdot [o_1, p_1]$ is labeled by the commutator
$[\hat{w}(a_0, b_0), v(x_2, y_2)]$. Adjoin a triangular relator to each
edge of $[p_1, p_2]$ and $[o_1, o_2]$ to obtain paths labeled by the
word $\hat{w}(\overline{x}_0 x_1, \overline{y}_0 y_1)$. Now these paths
and the paths $[p_1, o_1]$, $[p_2, o_2]$ can be filled using commutator
relators. In this way the loop $[p_1, p_2] \cdot [p_2, o_2] \cdot [o_2,
o_1] \cdot [o_1, p_1]$ bounds a diagram $\Delta_{012}$ over $X_T$ of area
$2\abs{[p_1, p_2]} + 2 (2\abs{[p_1, p_2]})\abs{v}$. 

Finally consider the loop $[o_1, o_2] \cdot [o_2, q_2] \cdot [q_2, q_1]
\cdot [q_1, o_1]$. It is labeled entirely by generators of $F_0 \times
F_1$ and represents the trivial element, so it bounds a least-area
diagram $\Delta_{01}$ over $X_{01}$. Its boundary is labeled by the word
\[\hat{w}(a_0, b_0) u'(x_1, y_1)^{-1} \hat{w}(a_0, b_0)^{-1} u(x_1,
y_1)\] 
with each of the four subwords being reduced. Consider corridors in
$\Delta_{01}$ for the corridor schemes $\Ss_0$ and $\Ss_1$. The
$\Ss_0$--corridors can only land on the sides $[q_1, q_2]$ and $[o_1,
o_2]$, and since these sides are labeled by reduced words, each such
corridor has one end on each side. Thus, each edge of $[q_1,q_2]$ is
joined by an $\Ss_0$--corridor to the edge of $[o_1, o_2]$ corresponding
to the same letter of $\hat{w}$. 

Consider the arrangement of the $\Ss_1$--corridors in
$\Delta_{01}$. Every edge in the boundary has an $\Ss_1$--corridor
emanating from 
it. There are no corridors of annulus type (see Remarks
\ref{sideremark}), and corridors cannot join two edges in the same side
of the boundary. If there is a corridor joining $[q_1, o_1]$ to $[q_2,
o_2]$ then we must have 
\begin{equation}\label{short}
2\abs{[p_1,p_2]} \ \leq \ \abs{[q_1, o_1]} + \abs{[q_2, o_2]}
\end{equation}
and we will be satisfied for the moment. Assume now that no such corridor
is present. Then, the most general arrangement is shown in Figure
\ref{fig:corridor-config2}. 
\begin{figure}[ht]
\labellist
\hair 2pt
\small
\pinlabel* {$m$} at 24 6
\pinlabel {$o_1$} [r] at 0 60
\pinlabel {$q_1$} [r] at .5 14
\pinlabel {$o_2$} [l] at 128.5 65
\pinlabel {$q_2$} [l] at 127 1
\endlabellist
\includegraphics[width=2.5in]{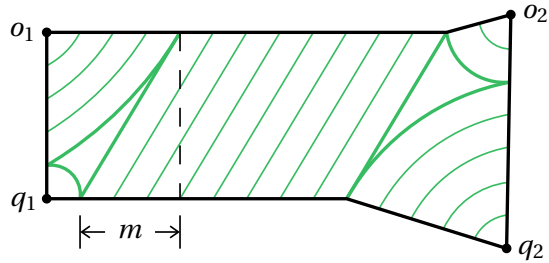}
\caption{A generic configuration of
  $\Ss_1$--corridors in $\Delta_{01}$.}\label{fig:corridor-config2} 
\end{figure}

Notice that corridors running between $[o_1, o_2]$ and $[q_1, q_2]$ will
land on edges that are offset by a fixed amount $m \leq
\min\{\abs{[q_1,o_1]}, \abs{[q_2,o_2]}\}$. In particular an
$\Ss_0$--corridor can be crossed by at most $m+1$ $\Ss_1$--corridors of
this type. There are $\abs{[q_1, o_1]} + \abs{[q_2,o_2]}$ 
$\Ss_1$--corridors not of this type, so an $\Ss_0$--corridor can cross no
more than $2(\abs{[q_1, o_1]} + \abs{[q_2,o_2]})$ $\Ss_1$--corridors
overall. 

By Remarks \ref{sideremark}, an $\Ss_0$--corridor and an
$\Ss_1$--corridor will intersect if and only if their endpoints are linked
(or equal) in the boundary of $\Delta_{01}$, and when this occurs, the
intersection will have area $1$ or $2$. Moreover, every $2$--cell of
$\Delta_{01}$ is in the intersection of such a pair. Since every
$\Ss_0$--corridor is crossed by at most $2(\abs{[q_1, o_1]} +
\abs{[q_2,o_2]})$ $\Ss_1$--corridors, we conclude that 
\begin{equation*}
\area(\Delta_{01}) \leq
4(\abs{[q_1, o_1]} + \abs{[q_2,o_2]})\abs{[o_1, o_2]}.
\end{equation*}

To finish, first suppose we are in the special case
where \eqref{short} holds. Then, the total perimeter is at most
$12(\abs{w_L} + \abs{w_R})$ by \eqref{alpha}, \eqref{beta}, and
\eqref{alphabeta}. Then 
\[\area(wz^{-1}) \ \leq \ C (12(\abs{w_L} + \abs{w_R}))^2 \ \leq \ 144C
\abs{w_{i'}} (\abs{w_L} + \abs{w_R})\]
so we require $K_5 \geq 144C$ to cover this case. 

Otherwise, we use our estimates for the areas of the four diagrams
$\Delta_{\alpha}$, $\Delta_{\beta}$, $\Delta_{012}$, and
$\Delta_{01}$. Since $\abs{[p_1,p_2]} = \abs{[o_1, o_2]} = \abs{[q_1,
  q_2]} \leq \abs{w_{i'}}$, these estimates yield 
\[\area(wz^{-1}) \ \leq \ 9C\abs{\alpha}^2 + 9C\abs{\beta}^2 +
(2\abs{w_{i'}} + 4\abs{w_{i'}}\abs{v}) + 4(\abs{u} + \abs{u'})
\abs{w_{i'}}.\] 
Using \eqref{alpha} and \eqref{beta} this reduces to
\[\area(wz^{-1}) \ \leq \ 9C(\abs{\alpha} + \abs{\beta})^2 + \abs{w_{i'}}
(2 + 8 (\abs{\alpha} + \abs{\beta})).\]
Note that $(\abs{\alpha} + \abs{\beta}) \leq 4\abs{w_{i'}}$ by
\eqref{alphabeta}, and a further application of \eqref{alphabeta} yields 
the following:
\begin{align*}
\area(wz^{-1}) \ &\leq \ 36C \abs{w_{i'}} (\abs{\alpha} + \abs{\beta}) +
\abs{w_{i'}} (2 + 8 (\abs{\alpha} + \abs{\beta})) \\
&\leq \ (36C + 10) \abs{w_{i'}}(\abs{\alpha} + \abs{\beta}) \\
&\leq \ (144C + 40) \abs{w_{i'}} (\abs{w_L} + \abs{w_R}). 
\end{align*}
Taking $K_5 \geq 144C + 40$ completes the proof. 
\end{proof}

\begin{proposition}[Area in $S_{T,n}$]\label{areainS}
Given $T$ and $n$ there is a constant $K_6$ with the following
property. Suppose $w$ and $z(a_{\nu_j},b_{\nu_j})$ represent the same
element of $A_{\nu_j} \subset S_{T,n}$, where $w$ is a word in the
standard generators of $S_{T,n}$ and $z$ is reduced. Then $\area(wz^{-1})
\ \leq \ K_6 \abs{w}^{2\alpha}$.  
\end{proposition}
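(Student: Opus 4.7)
The plan is to argue by induction on the number of occurrences of stable letters $r_1^{\pm 1}, \dotsc, r_m^{\pm 1}$ in $w$, following the tree-of-spaces decomposition used in the proof of Proposition~\ref{distortion}. In the base case, $w$ has no stable letters, so $w$ is a word in the standard generators of $V_T$; applying Proposition~\ref{areainVT} to a decomposition of $w$ into single-type pieces gives $\area(wz^{-1}) \leq K_5 \abs{w}^2 \leq K_5 \abs{w}^{2\alpha}$, since $\alpha \geq 1$.

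For the inductive step, decompose $w = w_1 \cdots w_k$ using the tree-of-spaces structure of $\widetilde{Y}_{T,n}$ exactly as in Proposition~\ref{distortion}, with each $w_i$ of one of the types~(1)--(4) there. For every subword $w_i = r_{j}^{\varepsilon} u_i r_j^{-\varepsilon}$ of type (1) or (2), the interior $u_i$ has strictly fewer stable letters than $w$. Let $\widetilde{z}_i$ be the reduced word in the relevant peripheral generators representing the same element as $u_i$. By the inductive hypothesis there is a van Kampen diagram of area at most $K_6 \abs{u_i}^{2\alpha}$ filling $u_i \widetilde{z}_i^{-1}$, and by Corollary~\ref{distortioncor}, $\abs{\widetilde{z}_i} \leq K_4 \abs{u_i}^{\alpha}$. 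Attach a single $r_j$-corridor strip transporting $\widetilde{z}_i$ across the stable letter to the reduced word $v_i$ in the peripheral generators on the other side; by Lemma~\ref{minstretchmap} the length of this strip is bounded by a constant times $\abs{u_i}^{\alpha}$. Set $v_i := w_i$ whenever $w_i$ is of type (3) or (4). The resulting word $v = v_1 \cdots v_k$ is in the standard generators of $V_T$ and represents the same element of $A_{\nu_j}$ as $w$.

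Applying Proposition~\ref{areainVT} to $vz^{-1}$ (with the decomposition refined so each piece has a single type) bounds the vertex-group contribution by $K_5 \sum_{(i,j) \in \Is} \abs{v_i}\abs{v_j}$, which by Remark~\ref{arearemark} equals $K_5 \bigl(\abs{v}^2 - \sum_{i \in \Ps} \abs{v_i}^2\bigr)$. Combining with the strip areas and the inductive contributions $K_6 \abs{u_i}^{2\alpha}$, the goal is to choose $K_6$ sufficiently large relative to $K_4, K_5$, and the constants appearing in Lemma~\ref{minstretchmap} so that the sum stays below $K_6 \abs{w}^{2\alpha}$. The straightforward estimates $\abs{v_i} \leq C \abs{w_i}^{\alpha}$ together with the sub-additivity $\sum x_i^{\alpha} \leq \bigl(\sum x_i\bigr)^{\alpha}$ for $\alpha \geq 1$ handle most configurations.

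The main obstacle is the \emph{dominant single-subword case}, in which $w$ contains exactly one type~(1) or (2) piece $\widehat{w} = r_{j}^{\varepsilon} u \, r_{j}^{-\varepsilon}$ that accounts for almost all of $\abs{w}$. In this case the crude bound $K_5 \abs{v}^2$ on the vertex-group area is of order $\abs{w}^{2\alpha}$, which leaves no room to absorb the inductive contribution $K_6(\abs{w}-2)^{2\alpha}$ inside $K_6 \abs{w}^{2\alpha}$. The resolution is that the single replaced piece $v_1$ lies in the peripheral subgroup $A_{\nu_j}$ containing $z$, so the corresponding index belongs to $\Ps$; exactly as in the analysis of the ``single peripheral subword dominates'' situation in the proof of Proposition~\ref{areainVT}, the quantity $\abs{v}^2 - \sum_{i \in \Ps}\abs{v_i}^2$ reduces to something of order $\abs{v_1}(\abs{w_L}+\abs{w_R})$, which is \emph{linear} rather than quadratic in the complementary lengths. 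Combined with the convexity inequality
\[
(\abs{w}-s)^{2\alpha} \ \leq \ \abs{w}^{2\alpha} - 2\alpha s\,(\abs{w}-s)^{2\alpha-1}
\]
applied with $s = \abs{w_L}+\abs{w_R}+2$, this gives a gap of order $s \abs{w}^{2\alpha-1}$ that is sufficient to absorb all additional costs once $K_6$ is enlarged appropriately (with finitely many small values of $\abs{w}$ handled by enlarging $K_6$ further). The case of two or more type~(1)/(2) subwords is easier: multiple peripheral pieces in $\Ps$ shrink the vertex-group bound, and the strict sub-multiplicativity $\sum_{i\in I'}\abs{u_i}^{2\alpha} < \bigl(\abs{w}-2\abs{I'}\bigr)^{2\alpha}$ for $\abs{I'} \geq 2$ yields the quantitative slack needed to close the induction.
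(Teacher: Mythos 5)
The overall architecture of your proposal matches the paper's: induct, decompose $w$ into pieces $w_i$, replace the pinched pieces $r_j^{\pm1}u_i r_j^{\mp1}$ by reduced peripheral words $v_i$ using Corollary~\ref{distortioncor} and $r_j$--corridors, and feed the resulting word $v$ through Proposition~\ref{areainVT}. You also correctly identify that the $\Is$--versus--$\Ps$ distinction in Proposition~\ref{areainVT} is the crucial feature. But at that point you diverge from the paper and, more importantly, leave a real gap.

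The paper never needs a case split in this proposition. Because $\Is$ excludes the diagonal pairs $(i,i)$ with $i\in\Ps$, and every $i\in I_r$ has $v_i$ peripheral (so $I_r\subseteq\Ps$), one gets
\[
\area(wz^{-1}) \ \leq\ K_6\!\!\sum_{(i,j)\in\Is}\!\!\abs{w_i}^{\alpha}\abs{w_j}^{\alpha}\ +\ K_6\!\sum_{i\in I_r}\!\abs{w_i}^{2\alpha}
\ \leq\ K_6\Bigl(\sum_{i\neq j}\abs{w_i}^{\alpha}\abs{w_j}^{\alpha}+\sum_{i}\abs{w_i}^{2\alpha}\Bigr)
\ =\ K_6\Bigl(\sum_i\abs{w_i}^{\alpha}\Bigr)^{\!2}\ \leq\ K_6\abs{w}^{2\alpha},
\]
the last step by superadditivity of $x\mapsto x^{\alpha}$. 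The diagonal slots in $\Ps$ that $\Is$ omits are \emph{exactly} the ones the inductive terms fill, so there is nothing left to absorb and no ``dominant single subword'' problem to resolve.

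Your proposal instead declares that ``straightforward estimates handle most configurations'' and isolates a ``dominant single-subword case'' to be repaired by the convexity inequality $(\abs{w}-s)^{2\alpha}\leq\abs{w}^{2\alpha}-2\alpha s(\abs{w}-s)^{2\alpha-1}$. This does not close rigorously as written. First, if there is a single pinched piece and $u$ is short (including empty), the gap term $2\alpha s(\abs{w}-s)^{2\alpha-1}$ is small or zero while the vertex-group cost is not, and the suggestion to ``enlarge $K_6$ for finitely many small $\abs{w}$'' doesn't work, because $K_6$ sits on both sides of the inductive inequality and must be fixed once and for all. Second, the claim that the case of two or more pinched subwords ``is easier'' via a strict sub-multiplicativity $\sum_{i\in I'}\abs{u_i}^{2\alpha}<(\abs{w}-2\abs{I'})^{2\alpha}$ is asserted without quantifying the slack, so there is no guarantee it covers the vertex-group cost. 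Third, even in the dominant case, the reduction of $\abs{v}^2-\sum_{\Ps}\abs{v_i}^2$ to something of order $\abs{v_1}(\abs{w_L}+\abs{w_R})$ silently assumes $\abs{v_1}$ is the dominant summand of $\abs{v}$, which need not hold (the reduced peripheral word $v_1$ can be much shorter than $\hat{w}$). In short: you are trying to re-derive by hand, inside this proposition, the content that Proposition~\ref{areainVT} already packages in the index set $\Is$; using $\Is$ as it stands makes the induction close by a single elementary calculation and removes the case analysis entirely.
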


We follow the proof of Proposition 5.5 in \cite{BBFS}. 

\begin{proof}
The proof is by induction on $\abs{w}$. Let $M = \max\{ \abs{\phi^n(x)},
\abs{\phi^n(y)}, \abs{\phi^{-n}(x)}, \abs{\phi^{-n}(y)}\}$.
Let $K_6 = M^2 K_4^2 K_5$, where $K_4$ is given by Corollary
\ref{distortioncor} and $K_5$ is given by Proposition
\ref{areainVT}. Write $w$ as $w_1 \dotsc w_k$ where each $w_i$ 
either is a word in the standard generators of $V_T$, or $w_i =
r_{j}^{\pm 1} u_i r_{j}^{\mp 1}$ for some $j$. Let $I_r$ be the set of
indices for which the latter case occurs, and note that $w_i$ represents
an element of a peripheral subgroup of $V_T$. Let $v_i$ be the reduced
word in the generators of that subgroup representing $w_i$. For $i
\not\in I_r$ let $v_i = w_i$, and define $v = v_1 \dotsm v_k$. 

By Proposition \ref{areainVT} we have $\area(vz^{-1}) \leq K_5
\sum_{(i,j) \in \Is} \abs{v_i} \abs{v_j}$. For $i \in I_r$ we have either
$\abs{v_i} \leq K_4 \abs{w_i}^{\alpha}$ (if $v_i \in A_{\nu_0}$) or
$\abs{v_i} \leq M K_4 \abs{u_i}^{\alpha}$ (if $u_i \in A_{\nu_0}$), by
Corollary \ref{distortioncor}. In any case (including $i \not\in I_r)$ we
have $\abs{v_i} \leq M K_4 \abs{w_i}^{\alpha}$. Therefore, 
\begin{equation}\label{middlearea}
\area(vz^{-1}) \ \leq \ K_6 \sum_{(i,j)\in\Is} \abs{w_i}^{\alpha}
\abs{w_j}^{\alpha}. 
\end{equation}

Next note that 
\[\area(wv^{-1}) \ \leq \ \sum_{i \in I_r} \area(w_i v_i^{-1})\]
since $w_i = v_i$ for $i \not\in I_r$. For each term
$\area(w_i v_i^{-1})$, recall that $w_i = r_j^{\pm 1} u_i r_j^{\mp
  1}$. Let $z_i$ be the reduced word in peripheral generators
representing the same peripheral element as $u_i$. Apply the induction
hypothesis to $u_i$ to obtain 
\begin{equation}\label{IHarea}
\area(u_i z_i^{-1}) \ \leq \ K_6 \abs{u_i}^{2\alpha} \ = \ K_6 (\abs{w_i} -
2)^{2\alpha}.
\end{equation}
One of the words $z_i, v_i$ is an element of $A_{\nu_0}$, so there is a
folded $r_j$--corridor with boundary word $r_j^{\pm 1} z_i r_j^{\mp 1}
v_i^{-1}$, where one of the boundary arcs labeled $z_i$ or $v_i$ is the
bottom and the other is the top. The area of this corridor is the length
of the bottom, which is at most $M\abs{v_i}$. As noted above, $\abs{v_i}
\leq MK_4\abs{w_i}^{\alpha}$, and so 
\begin{equation*}
\area(r_j^{\pm 1} z_i r_j^{\mp 1} v_i^{-1}) \ \leq \ MK_4
\abs{w_i}^{\alpha}. 
\end{equation*}
Together with \eqref{IHarea} we obtain 
\begin{align}\label{outerarea}
\area(w_i v_i^{-1}) \ &\leq \ K_6 ( (\abs{w_i} - 2)^{2\alpha} +
\abs{w_i}^{\alpha}) \notag \\
&\leq \ K_6 \abs{w_i}^{2\alpha} 
\end{align}
for $i \in I_r$. The last inequality above holds exactly as in
\cite[Proposition 5.5]{BBFS}: for numbers $x \geq 0$ one has
$(x+2)^{2\alpha} \geq x^{\alpha}(x+2)^{\alpha} + 2^{\alpha}(x+2)^{\alpha}
\geq x^{2\alpha} + (x+2)^{\alpha}$. 

Lastly, add together \eqref{middlearea} and \eqref{outerarea} for each $i
\in I_r$ to obtain
\begin{align*}
\area(wz^{-1}) \ &\leq \ K_6 \sum_{(i,j)\in \Is} \abs{w_i}^{\alpha}
\abs{w_j}^{\alpha} \ + \ K_6 \sum_{i \in  I_r} \abs{w_i}^{2\alpha} \\
&\leq \ K_6 \sum_{i \not= j} \abs{w_i}^{\alpha}
\abs{w_j}^{\alpha} \ + \ K_6 \sum_{i} \abs{w_i}^{2\alpha}. 
\end{align*}
The latter quantity is $K_6 \abs{w}^{2\alpha}$, as desired. 
\end{proof}

\begin{theorem}\label{dehnfunction}
Given $T$ and $n$ let $m = \abs{T} + 1$, let $\lambda > 1$ be the
Perron-Frobenius eigenvalue of $\phi$, and let $\alpha = n
\log_m(\lambda)$. If $\alpha \geq 1$ then the Dehn function of $S_{T,n}$
is given by $\delta(x) = x^{2\alpha}$. 
\end{theorem}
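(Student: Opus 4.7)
The plan is to prove the equivalence $\delta_{S_{T,n}}(x) \simeq x^{2\alpha}$ by establishing matching upper and lower bounds, each of which is within reach of the machinery assembled in the preceding sections.

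For the upper bound $\delta_{S_{T,n}}(x) \preceq x^{2\alpha}$, I would appeal directly to Proposition \ref{areainS}. Any word $w$ of length at most $x$ representing the identity in $S_{T,n}$ represents the trivial element of the peripheral subgroup $A_{\nu_0}$; taking $j = 0$ and $z$ to be the empty word in the statement of that proposition yields $\area(w) \leq K_6 \abs{w}^{2\alpha} \leq K_6 x^{2\alpha}$, which is all that is needed.

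For the lower bound $\delta_{S_{T,n}}(x) \succeq x^{2\alpha}$, I would fix once and for all a convenient monotone palindromic word $w(x,y)$ (for instance $w=xyx$) and work with the snowflake diagrams $\Delta(w,d)$ constructed in Section \ref{sec:lower}. Proposition \ref{sfleastarea} tells us that each $\Delta(w,d)$ is least-area for its own boundary word, so its area is a genuine lower bound for $\delta_{S_{T,n}}$ evaluated at $\abs{\partial\Delta(w,d)}$. A simple induction on $d$, tracking how each peripheral boundary subword $\phi^n(w)(a_{\nu_i},b_{\nu_i})^{\pm 1}$ of $\partial\Delta(\phi^n(w),d-1)$ is replaced in $\partial\Delta(w,d)$ by an $r_i$-corridor together with a canonical diagram of $w$, shows that the number of peripheral subwords on $\partial\Delta(w,d)$ is $2m^{d+1}$ and the total number of $r$-letters is $O(m^d)$, so $\abs{\partial\Delta(w,d)} \leq C_1 m^d$. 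For the area lower bound, the recursive construction embeds $\Delta(\phi^{dn}(w),0)$ as an interior subdiagram of $\Delta(w,d)$, of area $6\abs{T}\abs{\phi^{dn}(w)}^2$. Since $\phi^n$ is monotone, no free reduction occurs on iteration, so applying the Perron--Frobenius growth of $M_\phi^{dn}$ to the abelianised letter vector of $w$ gives $\abs{\phi^{dn}(w)} \geq C_2 \lambda^{dn}$. Using the defining identity $\lambda^n = m^\alpha$ this yields
\[
\area(\Delta(w,d)) \ \geq \ C_3 \, m^{2d\alpha} \ \geq \ C_4 \, \abs{\partial\Delta(w,d)}^{2\alpha}.
\]

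Since the successive boundary lengths $n_d = \abs{\partial\Delta(w,d)}$ satisfy bounded ratios $n_{d+1}/n_d \leq m + O(1)$, Remark \ref{equivremark} promotes the estimate along this sparse sequence to the desired inequality $\delta_{S_{T,n}}(x) \succeq x^{2\alpha}$ on all of $\N$. The main technical obstacle is not strategic but one of bookkeeping: carrying out the recursive counts of peripheral subwords and $r$-letters on $\partial\Delta(w,d)$, and verifying cleanly that $\Delta(\phi^{dn}(w),0)$ truly sits inside $\Delta(w,d)$ with its full area available. Neither is difficult, but both require care; combined with the immediate upper bound from Proposition \ref{areainS}, they complete the proof.
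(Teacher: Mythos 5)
Your proposal matches the paper's proof in both structure and substance: the upper bound is obtained exactly by taking $z$ empty in Proposition \ref{areainS}, and the lower bound is derived exactly as you describe, from Proposition \ref{sfleastarea}, the embedded doubled canonical diagram $\Delta(\phi^{dn}(w),0)$ of area $6\abs{T}\abs{\phi^{dn}(w)}^2$, the Perron--Frobenius estimate $\abs{\phi^{dn}(w)}\geq C\lambda^{dn}$, the identity $\lambda^n=m^\alpha$, and Remark \ref{equivremark}. The bookkeeping you flag (counting $r$--letters and peripheral subwords to show $\abs{\partial\Delta(w,d)}=\Theta(m^d)$) is carried out in the paper in precisely the same way, so there is nothing missing.
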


\begin{proof}
First we establish the lower bound $\delta(x) \succeq x^{2\alpha}$. Let
$w(x,y)$ be a  
monotone palindromic word (eg. $x$) and consider the snowflake diagrams
$\Delta (w, i)$ for $i \geq 1$. Let $n_i$ be the boundary length of
$\Delta(w, i)$. The boundary word has $4m^{i-1}$ occurrences of the
letters $r_j$ in it, and two adjacent such letters are never separated by
more than $m\abs{w}$ letters from $V_T$. Thus we have
\begin{equation}\label{ni}
4 m^{i-1} \ \leq \ n_i \ \leq \ 4(m\abs{w} + 1)m^{i-1}.
\end{equation}
From the second of these inequalities we obtain
\begin{align*}
(n_i)^{\alpha} \ &\leq \ \left( \frac{4(m\abs{w}+1)}{m}\right)^{\alpha}
(m^{\alpha})^i \notag \\
&= \ \left( \frac{4(m\abs{w}+1)}{m}\right)^{\alpha}\lambda^{ni}
\end{align*}
and so 
\begin{equation*}
\left(\frac{m}{4(m\abs{w}+1)}\right)^{\alpha} (n_i)^{\alpha} \ \leq \
\lambda^{ni}. 
\end{equation*}
Next, $\Delta(w,i)$ has area at least $6 \abs{T} \abs{\phi^{in}(w)}^2$,
which is the area of the doubled canonical diagram at its center. There
is a constant $C$ such that $\abs{\phi^k(v)} \geq C \lambda^k \abs{v}$
for every non-trivial word $v$. Thus,
\begin{align*}
\area(\Delta(w,i)) \ &\geq \ 6 \abs{T} C^2 \abs{w}^2
\bigl(\lambda^{ni}\bigr)^2 \\
&\geq \ 6\abs{T} C^2 \abs{w}^2
\left(\frac{m}{4(m\abs{w}+1)}\right)^{2\alpha}  (n_i)^{2\alpha}. 
\end{align*}
Taking $D = 6\abs{T} C^2 \abs{w}^2
\left(\frac{m}{4(m\abs{w}+1)}\right)^{2\alpha}$, we have shown that
$\delta_{Y_{T,n}}(n_i) \geq D(n_i)^{2\alpha}$ for each $i$, because
$\Delta(w,i)$ is a least-area diagram over $Y_{T,n}$ with boundary
length $n_i$. By \eqref{ni} the ratios $n_{i+1}/n_i$ are bounded, and so
we conclude by Remark \ref{equivremark} that $\delta(x)\succeq x^{2\alpha}$. 

The upper bound follows immediately from Proposition \ref{areainS}:
taking $z$ to be the empty word, $\area(w) \leq K_6 \abs{w}^{2\alpha}$
for every word $w$ representing the trivial element of $S_{T,n}$. Thus
$\delta(x) \preceq x^{2\alpha}$. 
\end{proof}


\newcommand{\etalchar}[1]{$^{#1}$}
\def\cprime{$'$}
\providecommand{\bysame}{\leavevmode\hbox to3em{\hrulefill}\thinspace}
\providecommand{\MR}{\relax\ifhmode\unskip\space\fi MR }
\providecommand{\MRhref}[2]{%
  \href{http://www.ams.org/mathscinet-getitem?mr=#1}{#2}
}
\providecommand{\href}[2]{#2}

\end{document}